\definecolor{uuuuuu}{rgb}{0.27,0.27,0.27}
\definecolor{sqsqsq}{rgb}{0.1255,0.1255,0.1255}
\newtheorem{definition}{Definition} [section]
\newtheorem{theorem}[definition]{Theorem}
\newtheorem{lemma}[definition]{Lemma}
\newtheorem{claim}[definition]{Claim}
\begin{document}
\title{\bf\Large Stability theorems for some Kruskal-Katona type results}

\date{\today}

\author{
Xizhi Liu
\thanks{Department of Mathematics, Statistics, and Computer Science, University of Illinois, Chicago, IL, 60607 USA. Email: xliu246@uic.edu.
Research partially supported by NSF awards DMS-1763317 and DMS-1952767.}
\and
Sayan Mukherjee
\thanks{Department of Mathematics, Statistics, and Computer Science, University of Illinois, Chicago, IL, 60607 USA. Email: smukhe2@uic.edu.}
}
\maketitle
\begin{abstract}
The classical Kruskal-Katona theorem gives a tight upper bound for the size of
an $r$-uniform hypergraph $\mathcal{H}$ as a function of the size of its shadow.
Its stability version was obtained by Keevash who proved that if the size of $\mathcal{H}$
is close to the maximum, then $\mathcal{H}$ is structurally close to a complete $r$-uniform hypergraph.
We prove similar stability results for two classes of hypergraphs
whose extremal properties have been investigated by many researchers:
the cancellative hypergraphs and hypergraphs without expansion of cliques.

{\bf Keywords.} hypergraphs, shadows, the Kruskal-Katona theorem, Tur\'{a}n problems, stability.
\end{abstract}

\section{Introduction}\label{SEC:introduction}
An $r$-uniform hypergraph (henceforth $r$-graph) $\mathcal{H}$ is a collection of $r$-subsets of a ground set $V(\mathcal{H})$,
which is called the vertex set of $\mathcal{H}$. The $r$-sets that are contained in $\mathcal{H}$ are called edges of $\mathcal{H}$,
and we identify $\mathcal{H}$ with its edge set.
We use $K_{n}^{r}$ to denote the complete $r$-graph on $n$ vertices.
The {\em shadow} $\partial\mathcal{H}$ of an $r$-graph $\mathcal{H}$ is an $(r-1)$-graph defined as
\begin{align}
\partial\mathcal{H} = \left\{A\in \binom{V(\mathcal{H})}{r-1}\colon \exists B\in \mathcal{H}\text{ such that }A\subset B\right\}. \notag
\end{align}
The classical Kruskal-Katona theorem \cite{KR63,KA66} gives a tight upper bound for $|\mathcal{H}|$
as a function of $|\partial\mathcal{H}|$ . Here we state a technically simper version due to Lov\'{a}sz.

\begin{theorem}[Lov\'{a}sz {\cite[Ex. 13.31(b)]{LO93}}]\label{THM:KK-Lovasz-version}
Let $r\ge 2$ be an integer and $\mathcal{H}$ be an $r$-graph with $|\partial\mathcal{H}| = \binom{x}{r-1}$ for some real number $x\ge r$.
Then $|\mathcal{H}| \le \binom{x}{r}$.
Moreover, equality holds if $x$ is an integer and $\mathcal{H}$ is a union of $K_{x}^{r}$ and a set of isolated vertices.
\end{theorem}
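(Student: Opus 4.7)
The plan is to combine the classical Kruskal--Katona theorem in its Macaulay cascade form with a real-analytic comparison between the polynomials $\binom{y}{r-1}$ and $\binom{y}{r}$. First I would write $|\mathcal{H}|$ in its unique cascade expansion
\[
|\mathcal{H}| = \binom{a_r}{r} + \binom{a_{r-1}}{r-1} + \cdots + \binom{a_s}{s}, \qquad a_r > a_{r-1} > \cdots > a_s \geq s \geq 1,
\]
so that the classical form of Kruskal--Katona gives
\[
|\partial\mathcal{H}| \geq \binom{a_r}{r-1} + \binom{a_{r-1}}{r-2} + \cdots + \binom{a_s}{s-1}.
\]
Combined with the hypothesis $|\partial\mathcal{H}| = \binom{x}{r-1}$, the theorem reduces to the following purely analytic statement: whenever a cascade $a_r > \cdots > a_s \geq s \geq 1$ satisfies $\sum_{i=s}^{r}\binom{a_i}{i-1} \le \binom{x}{r-1}$, it follows that $\sum_{i=s}^{r}\binom{a_i}{i} \le \binom{x}{r}$.

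The main work is this analytic step, and it is where I expect the real obstacle to lie. The key tools are (i) strict monotonicity of $y \mapsto \binom{y}{r-1}$ on $[r-1,\infty)$, which immediately gives $a_r \leq x$; (ii) the factorisation $\binom{y}{r} = \frac{y-r+1}{r}\binom{y}{r-1}$ showing that the ratio $\binom{y}{r}/\binom{y}{r-1}$ is non-decreasing in $y$; and (iii) the Pascal identity $\binom{y}{r} = \binom{y-1}{r} + \binom{y-1}{r-1}$, interpreted now as a polynomial identity for real $y$. Using (ii) I would argue that among cascades with fixed $(r-1)$-sum $S \leq \binom{x}{r-1}$, the $r$-sum is maximised by ``collapsing'' the cascade into the single real term $\binom{x}{r-1}$, which contributes exactly $\binom{x}{r}$; shifting mass from a lower cascade index to the top increases the $r$-sum by more than it costs on the shadow side precisely because of the monotonicity of the ratio. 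Concretely I would induct on $r$: subtract the top cascade term $\binom{a_r}{r-1}$ from both sides, reducing to an $(r-1)$-dimensional inequality on the tail $(a_{r-1},\ldots,a_s)$ with a smaller real parameter, for which the inductive hypothesis applies. The delicate point is handling the non-integer regime $a_r < x < a_r+1$ so that the interpolated Pascal identity on the real side dominates the discrete cascade on the left.

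For the equality clause, if $x$ is a positive integer then the cascade of $\binom{x}{r-1}$ is the single term $\binom{x}{r-1}$, and equality throughout the chain above forces $|\mathcal{H}|$ to be exactly $\binom{x}{r}$, realised for instance by $\mathcal{H} = K_x^r$ augmented by arbitrarily many isolated vertices outside its support.
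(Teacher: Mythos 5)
The paper does not supply its own proof of Theorem~\ref{THM:KK-Lovasz-version}; it cites Lov\'asz's exercise and points to Keevash's short proof in \cite{KE08}, which works by picking a vertex of maximum degree, relating $|\partial\mathcal{H}|$ to the link and its shadow, and closing with an averaging/Lagrangian-type argument. Your route -- pass to the cascade (Macaulay) form of Kruskal--Katona and then compare with the real binomials -- is a legitimate and classical alternative, so the choice of strategy is fine. The problem is that the proposal does not actually carry out the one step on which everything hinges.

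Concretely, your inductive reduction (subtract $\binom{a_r}{r-1}$, apply the $(r-1)$-case to the tail) boils the theorem down to the following two-variable inequality: for reals $b\le a$ with $x$ defined by $\binom{x}{r-1}=\binom{a}{r-1}+\binom{b}{r-2}$, one must have $\binom{x}{r}\ge \binom{a}{r}+\binom{b}{r-1}$. You flag this as ``the delicate point'' and indicate that monotonicity of the ratio $\binom{y}{r}/\binom{y}{r-1}$ together with Pascal's identity should handle it, but the ratio argument compares exchange rates at \emph{different} binomial levels ($\binom{y}{r}/\binom{y}{r-1}$ at $y=x$ versus $\binom{y}{r-1}/\binom{y}{r-2}$ at $y=b$), and these are not ordered in the way ``shifting mass to the top'' would require: writing $\binom{y}{k}=\tfrac{y-k+1}{k}\binom{y}{k-1}$ and expanding, the coefficient $\tfrac{x-r+1}{r}-\tfrac{b-r+2}{r-1}$ can be negative, so the term-by-term comparison fails. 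What actually closes the inequality is an extra input: from $b\le a$ and Pascal one gets $a\le x\le a+1$, and then the function $\phi(t)=\binom{t}{r}-\binom{a}{r}-\binom{h(t)}{r-1}$ (with $h$ defined by $\binom{h(t)}{r-2}=\binom{t}{r-1}-\binom{a}{r-1}$) vanishes at both endpoints $t=a$ and $t=a+1$, and one must prove it is nonnegative in between -- a genuine convexity/derivative estimate involving the harmonic sums $\sum_{j}1/(y-j)$ that does not follow from the three tools you list. As written, the proposal identifies the right reduction but leaves the decisive analytic lemma unproved, so there is a real gap; it is fillable, but the proof as sketched is not complete, and the heuristic ``ratio monotonicity makes collapsing the cascade optimal'' is not by itself a correct justification.
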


Keevash \cite{KE08} gave a nice short proof to Theorem~\ref{THM:KK-Lovasz-version} without using the compression technique,
and moreover, his proof was extended to obtain the following stability result.

\begin{theorem}[Keevash \cite{KE08}]\label{THM:Keevash-stability}
Let $r\ge 2$ be an integer.
For every $\delta > 0$ there exists $\epsilon > 0$ such that if $\mathcal{H}$ is an $r$-graph with
$|\partial\mathcal{H}| = \binom{x}{r-1}$ and $|\mathcal{H}| \ge (1-\epsilon)\binom{x}{r}$ for some real number $x\ge r$,
then there exists a set $V'\subset V(\mathcal{H})$ of size at most $\lceil x \rceil$ such that all but at most $\delta \binom{x}{r}$
edges of $\mathcal{H}$ are contained in $V'$.
\end{theorem}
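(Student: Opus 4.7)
The plan is to induct on $r$ and revisit Keevash's non-compression proof of Theorem~\ref{THM:KK-Lovasz-version} in~\cite{KE08}, quantifying the slack in each inequality and then reading off structural information. The base case $r = 2$ is immediate, since $\partial\mathcal{H}$ is then the set of non-isolated vertices of $\mathcal{H}$, which has size $\binom{x}{1}=x$ and contains every edge; one may take $V' = \partial\mathcal{H}$ of size $\lceil x\rceil$, with no exceptional edges.

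For the inductive step with $r\ge 3$, set $d(A) = |\{v\notin A : A\cup\{v\}\in\mathcal{H}\}|$ for $A\in\partial\mathcal{H}$, so that $r|\mathcal{H}| = \sum_{A\in\partial\mathcal{H}} d(A)$. Keevash's argument deduces the bound $\sum_A d(A)\le (x-r+1)|\partial\mathcal{H}|$ from two ingredients: (a) an inductive Kruskal--Katona comparison between $|\partial\mathcal{H}|$ and $|\partial^2\mathcal{H}|$, and (b) a convexity step on the codegree profile $\{d(A)\}_{A\in\partial\mathcal{H}}$. Under the hypothesis $|\mathcal{H}|\ge(1-\epsilon)\binom{x}{r}$, both (a) and (b) are near-equalities, and a quantitative analysis should yield: (i) $\partial\mathcal{H}$ is near-extremal for Kruskal--Katona at level $r-1$, i.e., $|\partial^2\mathcal{H}| \le (1+O_r(\epsilon))\binom{x}{r-2}$; and (ii) all but an $O_r(\sqrt{\epsilon})$-fraction of the codegrees $d(A)$ lie within $O_r(\sqrt{\epsilon})\,x$ of the target value $x-r+1$.

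By (i) and the inductive hypothesis at level $r-1$ applied to $\partial\mathcal{H}$, with $\delta'$ chosen sufficiently smaller than $\delta$, there exists a set $V'\subseteq V(\partial\mathcal{H})$ of size at most $\lceil x\rceil$ such that all but $\delta'\binom{x}{r-1}$ shadows lie in $\binom{V'}{r-1}$. Any edge $B\in\mathcal{H}$ with $B\not\subseteq V'$ then contributes an $(r-1)$-subset to $\partial\mathcal{H}\setminus\binom{V'}{r-1}$; by (ii) each such bad shadow is contained in at most $O_r(x)$ edges, so a standard double count bounds the number of edges outside $V'$ by $O_r(\delta'\,x\binom{x}{r-1}) = O_r(\delta'\binom{x}{r}) \le \delta\binom{x}{r}$, provided $\epsilon$ is sufficiently small in terms of $\delta$ and $r$.

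The main obstacle is (i): extracting near-extremality of $\partial\mathcal{H}$ at level $r-1$ from near-extremality of $\mathcal{H}$ at level $r$. A na\"{i}ve application of Theorem~\ref{THM:KK-Lovasz-version} only yields the one-sided bound $|\partial^2\mathcal{H}|\ge\binom{x}{r-2}$, whereas the induction needs the matching upper bound $|\partial^2\mathcal{H}|\le(1+O_r(\epsilon))\binom{x}{r-2}$. This requires a careful stability-aware re-run of the shadow-of-shadow comparison in Keevash's argument, exploiting the codegree-concentration statement (ii) simultaneously with the inductive invocation of Kruskal--Katona to rule out the "$z$ much larger than $x$" regime. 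Once this quantitative propagation of near-extremality down the shadow chain is established, the transfer from shadows back to edges in the preceding paragraph is essentially book-keeping.
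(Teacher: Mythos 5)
This theorem is not proved in the paper; it is quoted from Keevash~\cite{KE08}, so there is no internal argument to compare against. Judged on its own, your write-up is a plan rather than a proof, and the plan has substantive gaps.

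The principal gap is one you flag yourself: step~(i), the upper bound $|\partial^{2}\mathcal{H}| \le (1+O_{r}(\epsilon))\binom{x}{r-2}$ needed to invoke the inductive hypothesis on $\partial\mathcal{H}$, is not established --- it is only asserted that a ``careful stability-aware re-run'' should give it. That propagation of near-extremality down one level of shadows is the heart of the theorem; without it there is no statement to feed into the induction. A second gap is in~(ii): near-extremality tells you that the average codegree $r|\mathcal{H}|/|\partial\mathcal{H}|$ is within $O_{r}(\epsilon)x$ of $x-r+1$, but that alone does not give concentration. To apply a tool like Lemma~\ref{LEMMA:function-concentrate} you would need an a priori upper bound on $\max_{A}d(A)$ comparable to $x$, whereas the only cheap bound is $d(A) \le (|\partial\mathcal{H}|-1)/(r-1)\sim x^{r-1}$, which is useless at this scale. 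Concentration therefore requires either a second-moment inequality for the codegree sequence or a separate pointwise codegree bound, and neither is sketched. Finally, the transfer step reads off ``each such bad shadow is contained in at most $O_{r}(x)$ edges'' from~(ii); but~(ii) is only a statement about a $(1-O_{r}(\sqrt{\epsilon}))$-fraction of shadows, and the bad shadows could in principle be exactly the high-codegree exceptions. This last point is repairable (bound $\sum_{A\text{ bad}}d(A)$ by splitting into typical and atypical $A$, and control the atypical contribution via $r|\mathcal{H}|\le r\binom{x}{r}$ together with the \emph{lower} bound in~(ii)), but the repair is itself conditional on~(ii), which is not in hand. In short, the architecture --- induct on $r$ via the second shadow and transfer structure from $\partial\mathcal{H}$ back to $\mathcal{H}$ --- is reasonable and is in the spirit of Keevash's proof, but the two analytic estimates that carry all the weight are missing.
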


In this work, we consider extensions of Theorem~\ref{THM:Keevash-stability} to the class of
hypergraphs that omit certain forbidden substructures.

Let $r\ge 2$ and $\mathcal{F}$ be a family of $r$-graphs.
An $r$-graph is {\em $\mathcal{F}$-free} if it does not contain any member of $\mathcal{F}$ as a (not necessarily induced) subgraph.
The {\em Tur\'{a}n number} ${\rm ex}(n,\mathcal{F})$ of $\mathcal{F}$ is the maximum size of an $\mathcal{F}$-free $r$-graph on $n$ vertices,
and the {\em Tur\'{a}n density} of $\mathcal{F}$ is $\pi(\mathcal{F}) = \lim_{n\to \infty}{\rm ex}(n,\mathcal{F})/\binom{n}{r}$.
It is one of the central problems in extremal combinatorics to determine ${\rm ex}(n,\mathcal{F})$ for various families $\mathcal{F}$.

Much is known about ${\rm ex}(n,\mathcal{F})$ when $r=2$ and one the most famous results in this regard is Tur\'{a}n's theorem \cite{TU41},
which states that for $\ell \ge 2$ the Tur\'{a}n number ${\rm ex}(n,K_{\ell+1})$ is uniquely achieved by
$T(n,\ell)$ which is the $\ell$-partite graph on $n$ vertices with the maximum number of edges.
However, for $r\ge 3$ determining ${\rm ex}(n,\mathcal{F})$, even $\pi(\mathcal{F})$, is notoriously hard in general.
Compared to the case $r=2$, very little is known about ${\rm ex}(n,\mathcal{F})$ for $r\ge 3$,
and we refer the reader to \cite{KE11} for results before 2011.

To have a better understanding of the extremal properties of $\mathcal{F}$-free hypergraphs,
in \cite{LM19A}, the following question which combines the Kruskal-Katona theorem and the hypergraph Tur\'{a}n problem
was studied systemically.
\begin{align}
&\mbox{If $\mathcal{H}$ is $\mathcal{F}$-free, what are the possible values of $|\mathcal{H}|$ for fixed $|\partial\mathcal{H}|$?} \notag
\end{align}

Tight upper bound for $|\mathcal{H}|$ was obtained in \cite{LM19A} for two specific families
that have been investigated by many researchers: cancellative hypergraphs and hypergraphs without expansions of cliques.

\subsection{Cancellative hypergraphs}\label{SUBSEC:cancellative-hypergraph}
For every integer $r\ge 2$ let $\mathcal{T}_{r}$ be the family of $r$-graphs with at most $2r-1$ vertices
and three edges $A,B,C$ such that $A\triangle B \subset C$.
Note that when $r=2$ the family $\mathcal{T}_{2}$ consists of only one graph $K_3$.
An $r$-graph $\mathcal{H}$ is {\em cancellative} if it has the property that $A\cup B = A\cup C$ implies $B = C$,
where $A,B,C$ are edges in $\mathcal{H}$.
It is easy to see that an $r$-graph is cancellative if and only if it is $\mathcal{T}_{r}$-free,
and a cancellative graph is simply a triangle-free graph.

Let $\ell\ge r \ge 2$ be integers and let $V_{1} \cup \cdots \cup V_{\ell}$ be a partition of $[n]$
with each $V_{i}$ of size either $\lfloor n/\ell \rfloor$ or $\lceil n/\ell \rceil$.
The {\em generalized Tur\'{a}n graph} $T_{r}(n,\ell)$ is the collection of all $r$-subsets of $[n]$ that have
at most one vertex in each $V_{i}$. Let $t_{r}(n,\ell) = |T_{r}(n,\ell)| \sim \binom{\ell}{r}(n/\ell)^r$.

In the 1960's, Katona raised the problem of determining the maximum size of a cancellative $3$-graph on $n$ vertices and
conjectured that the maximum size is achieved by $T_{3}(n,3)$.
Bollob\'{a}s \cite{BO74} proved Katona's conjecture
and he conjectured that a similar result holds for all $r\ge 4$.
Sidorenko \cite{SI87} proved it for $r=4$,
but Shearer \cite{SH96} gave a construction showing
that Bollob\'{a}s' conjecture is false for all $r\ge 10$.
The number $\pi(\mathcal{T}_{r})$ is still unknown for all $r\ge 5$.

The following Kruskal-Katona type result for cancellative $r$-graphs was proved in \cite{LM19A}
despite $\pi(\mathcal{T}_{r})$ is only known for $r\in\{3,4\}$.

\begin{theorem}[\cite{LM19A}]\label{THM:LM-cancellative}
Let $r \ge 2$ be an integer and $\mathcal{H}$ be a cancellative $r$-graph.
Suppose that $|\partial\mathcal{H}| = x^{r-1}/r^{r-2}$ for some real number $x\ge r$.
Then $|\mathcal{H}| \le (x/r)^{r}$. In other words, $|\mathcal{H}| \le \left(|\partial\mathcal{H}|/r\right)^{r/(r-1)}$.
\end{theorem}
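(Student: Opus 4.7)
The plan is to induct on $r$. The base case $r=2$ is Mantel's theorem: a cancellative $2$-graph is triangle-free, the hypothesis $|\partial\mathcal{H}| = x^{r-1}/r^{r-2} = x$ identifies $x$ with the number of non-isolated vertices, and the conclusion $|\mathcal{H}| \le (x/2)^2$ is exactly Mantel's bound.

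For $r \ge 3$, I would use the link $\mathcal{L}_v = \{A \subset V(\mathcal{H})\setminus\{v\}: A\cup\{v\}\in\mathcal{H}\}$ of each vertex $v$. Two simple observations drive the induction. First, $\mathcal{L}_v$ is a cancellative $(r-1)$-graph: any $\mathcal{T}_{r-1}$-configuration $A,B,C$ inside $\mathcal{L}_v$ on at most $2r-3$ vertices lifts to the $\mathcal{T}_r$-configuration $A\cup\{v\}, B\cup\{v\}, C\cup\{v\}$ in $\mathcal{H}$ on at most $2r-2 \le 2r-1$ vertices, contradicting cancellativity. Second, $T \in \partial\mathcal{L}_v$ iff $T \cup \{v\} \in \partial\mathcal{H}$, so $|\partial\mathcal{L}_v|$ equals the codegree $e_v$ of $v$ in $\partial \mathcal{H}$. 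Setting $d_v := |\mathcal{L}_v|$ and applying the inductive hypothesis yields $e_v \ge (r-1)\, d_v^{(r-2)/(r-1)}$. Summing, together with $\sum_v d_v = r|\mathcal{H}|$ and $\sum_v e_v = (r-1)|\partial\mathcal{H}|$, gives
\[
|\partial\mathcal{H}| \;\ge\; \sum_v d_v^{(r-2)/(r-1)}.
\]
Since the target bound rearranges to $|\partial\mathcal{H}| \ge r\, |\mathcal{H}|^{(r-1)/r}$, it would suffice to establish the degree inequality
\[
\sum_v d_v^{(r-2)/(r-1)} \;\ge\; r\, |\mathcal{H}|^{(r-1)/r}. \tag{$*$}
\]

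The main obstacle is precisely $(*)$. Because $(r-2)/(r-1) < 1$, the map $t \mapsto t^{(r-2)/(r-1)}$ is concave, so Jensen's inequality produces only \emph{upper} bounds on $\sum_v d_v^{(r-2)/(r-1)}$ in terms of the degree sum $r|\mathcal{H}|$, not the lower bound we need. At $r=2$, $(*)$ already reduces to Mantel's theorem, which genuinely requires triangle-freeness, confirming that cancellativity of $\mathcal{H}$ itself must be invoked a second time to obtain $(*)$. Two avenues look natural. Route (i): a Zykov-style symmetrization, replacing a vertex of smaller link by a clone of a vertex with larger link, verifying that the operation preserves the $\mathcal{T}_r$-free property and does not decrease $|\mathcal{H}|$; iterating reduces to a regular configuration where $(*)$ follows from a direct AM-GM computation (the toy instance $(r-1) m^{(r-2)/(r-1)} + m \ge r m^{(r-1)/r}$ is the relevant identity). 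Route (ii): a direct double count, using that for each $B \in \partial\mathcal{H}$ with codegree at least $2$, cancellativity forces the extension set $N(B) = \{v : B \cup \{v\} \in \mathcal{H}\}$ to be independent in the graph of pairs covered by edges of $\mathcal{H}$, and feeding this constraint into Cauchy--Schwarz or Hölder applied to $\sum_B d_B^2$. Route (i) feels cleanest, but the careful verification that the symmetrized hypergraph is still cancellative is the delicate point, and I would expect this to be where the bulk of the technical work lives.
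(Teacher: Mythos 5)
Your setup through the reduction to $(*)$ is sound and, in fact, mirrors the opening moves of the actual argument: the link $L(v)$ of any vertex in a cancellative $r$-graph is a cancellative $(r-1)$-graph, the identity $|\partial L(v)| = \deg_{\partial\mathcal{H}}(v)$ is correct, and applying the inductive hypothesis to each link gives the pointwise bound $|\partial L(v)| \ge (r-1)\,d(v)^{(r-2)/(r-1)}$. Summing and using $\sum_v |\partial L(v)| = (r-1)|\partial\mathcal{H}|$ does yield $|\partial\mathcal{H}| \ge \sum_v d(v)^{(r-2)/(r-1)}$, and your rearrangement of the target to $|\partial\mathcal{H}| \ge r|\mathcal{H}|^{(r-1)/r}$ is correct. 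However, $(*)$ is not a side lemma; it is the entire content of the theorem, and you leave it unproven. Neither of your two suggested routes is carried out, and both face real obstacles: symmetrization does not naturally preserve $|\partial\mathcal{H}|$, which is the constrained quantity here (so one cannot simply argue the symmetrized object is still a valid competitor), and the independent-set structure of $N(B)$ does not by itself control $\sum_B |N(B)|$ in the required way. So this is a genuine gap, not merely an omitted routine step.

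Where your plan and the actual proof diverge is instructive. By passing to the unweighted sum $\sum_v d(v)^{(r-2)/(r-1)}$, you discard information and are then forced to recover it via a free-standing degree inequality $(*)$. The argument recorded in this paper (Lemma~\ref{LEMMA:inequalities-cancellative-in-LM}, imported from \cite{LM19A}) never decouples in this way: starting from $|\mathcal{H}| = \frac{1}{r}\sum_v d(v) = \frac{1}{r}\sum_v d(v)^{1/(r-1)}\,d(v)^{(r-2)/(r-1)}$ and substituting the same inductive link bound termwise gives
\begin{align}
|\mathcal{H}| \;\le\; \frac{1}{r(r-1)}\sum_{v\in V(\mathcal{H})} d(v)^{1/(r-1)}\,|\partial L(v)|, \notag
\end{align}
and then the weighted sum on the right is bounded by $(|\partial\mathcal{H}|/r)^{r/(r-1)}$ directly via $(\ref{inequ-Thm4.4-04})$. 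The extra factor $d(v)^{1/(r-1)}$ is precisely what lets one bring in the second use of cancellativity cleanly: after swapping the order of summation, the sum becomes $\sum_{E\in\partial\mathcal{H}}\sum_{v\in E}d(v)^{1/(r-1)}$, and Lemma~\ref{LEMMA:cancellative-clique-in-r-2-shadow} controls $\sum_{v\in E}d(v)$ for each $E\in\partial\mathcal{H}$, which together with concavity of $t\mapsto t^{1/(r-1)}$ yields $(\ref{inequ-Thm4.4-04})$. In short: your induction on links is the right first step, but the second application of cancellativity has to be woven into a weighted sum rather than isolated as $(*)$; proving $(*)$ head-on is at least as hard as the theorem itself.
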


For integers $n\ge m \ge \ell \ge r \ge 2$
let $T_{r}(n,m,\ell)$ be the union of $T_{r}(m,\ell)$ and a set of $n-m$ isolated vertices.
Notice that the inequality in Theorem~\ref{THM:LM-cancellative} is tight for $T_{r}(n,m,r)$ if $m$ is a multiple of $r$.

We prove a corresponding stability result for Theorem~\ref{THM:LM-cancellative}.

\begin{theorem}\label{THM:stability-cancellative-KK-type}
Let $r \ge 2$ be an integer.
For every $\delta > 0$ there exist $\epsilon>0$ and $n_0$ such that the following holds for all real numbers $x\ge n_0$.
Suppose that $\mathcal{H}$ is a cancellative $r$-graph with
\begin{align}\label{equ:cancellative-assumptions}
|\partial\mathcal{H}| = \frac{x^{r-1}}{r^{r-2}}
\quad{\rm and}\quad
|\mathcal{H}| \ge (1-\epsilon)\left(\frac{x}{r}\right)^{r}.
\end{align}
Then there exists a set $V'\subset V(\mathcal{H})$ of size at most $\lceil x\rceil$ such that $\mathcal{H}$ is a subgraph
of a complete $r$-partite $r$-graph on $V'$ after removing at most $\delta x^r$ edges.
\end{theorem}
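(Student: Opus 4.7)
\emph{Plan.} My approach is induction on $r$, mirroring the inductive proof of Theorem~\ref{THM:LM-cancellative} and tracking defects at each step in the spirit of Keevash's stability proof of the Kruskal-Katona theorem (Theorem~\ref{THM:Keevash-stability}). The base case $r=2$ reduces to the stability version of Mantel's theorem: any triangle-free graph with at least $(1-\epsilon)n^2/4$ edges is $o(n^2)$-close to the balanced complete bipartite graph $K_{\lfloor n/2\rfloor,\lceil n/2\rceil}$.

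For the inductive step, I use the key fact that for every $v\in V(\mathcal{H})$ the link $L_v=\{A\setminus\{v\}:v\in A\in\mathcal{H}\}$ is itself a cancellative $(r-1)$-graph. Combining the identity $\sum_v|\partial L_v|=(r-1)|\partial\mathcal{H}|$ with the inductive form of Theorem~\ref{THM:LM-cancellative} applied to each link, namely $|\partial L_v|^{r-1}\ge(r-1)^{r-1}|L_v|^{r-2}$, yields
\begin{align*}
|\partial\mathcal{H}|\ \ge\ \sum_{v\in V(\mathcal{H})}d(v)^{(r-2)/(r-1)},
\end{align*}
which together with a power-mean/convexity argument delivers Theorem~\ref{THM:LM-cancellative}. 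Under the assumption~(\ref{equ:cancellative-assumptions}), both of these steps must be $O(\epsilon)$-tight. A standard defect analysis of the convexity step then produces: (i) at most $(1+o(1))x$ vertices of $\mathcal{H}$ have positive degree; (ii) all but $o(x)$ of these vertices $v$ are \emph{typical}, meaning $d(v)=(1\pm o(1))(x/r)^{r-1}$; and (iii) for every typical $v$, the link $L_v$ is a near-extremal cancellative $(r-1)$-graph, whose shadow and size are each within a $(1\pm o(1))$ factor of the extremal values predicted by Theorem~\ref{THM:LM-cancellative} with parameter $y:=(r-1)x/r$.

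Applying the inductive stability hypothesis to each such $L_v$ furnishes an $(r-1)$-partition $V_1^v,\ldots,V_{r-1}^v$ of a subset of $V(\mathcal{H})\setminus\{v\}$ of size at most $\lceil y\rceil$ such that all but $o(x^{r-1})$ edges of $L_v$ are transversal to it. The main obstacle is then to glue these local $(r-1)$-partitions into a single global $r$-partition of a core $V'\subseteq V(\mathcal{H})$ with $|V'|\le\lceil x\rceil$: the inductive hypothesis only specifies each $L_v$ up to a permutation of its parts, so one must show that, for nearly all pairs of typical vertices $v,v'$, the parts of $L_v$ and $L_{v'}$ can be consistently relabelled on their common vertex set. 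I would fix a reference typical vertex $v_0$, set $V_r:=\{v_0\}$ and $V_i:=V_i^{v_0}$ for $i<r$, and absorb each remaining typical vertex $u$ into the part $V_j$ selected by a plurality of the links $L_v$ containing $u$; the needed consistency across links comes from a voting/symmetrization argument carried out over the many typical edges that contain any two typical vertices. Once the alignment is complete, a final double-count using the cancellative structure and the degree estimate from step~(ii) bounds the number of edges of $\mathcal{H}$ either incident to a vertex outside $V'$ or violating the $r$-partition of $V'$ by at most $\delta x^r$, completing the proof.
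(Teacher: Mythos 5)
Your high-level strategy --- reduce to a core, induct on $r$, apply the induction hypothesis to a link, and then assemble an $r$-partition --- matches the paper's strategy up to a point, but your proposed assembly step is a genuine gap and is where the two arguments diverge decisively.

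The paper applies the induction hypothesis to a \emph{single} well-chosen vertex $v$ (one lying in an edge $\widehat E$ whose vertices simultaneously satisfy the degree and link-tightness conditions). Having obtained an $(r-1)$-partition $V_1,\dots,V_{r-1}$ of (essentially) $N(v)$ under which $L(v)$ is almost complete multipartite, it produces the $r$-th part \emph{for free} by setting $V_r = V(\mathcal{H})\setminus U_v$. The reason this works is Lemma~\ref{Lemma:N(s)-cap-Uv=empty}: in a cancellative $r$-graph, $N(A)\cap N(v)=\emptyset$ for every $A\in L(v)$. In words, any edge extending an $(r-1)$-set $A$ that already forms an edge with $v$ must exit the neighbourhood of $v$ entirely, so its last vertex is automatically placed in the new part $V_r$. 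Combined with the fact that most $A\in\partial\mathcal{H}$ have large neighbourhood (the set $\mathcal{G}$ in Claim~\ref{claim-low-bound-SL}), one directly counts $\approx (n/r)^r$ transversal edges without ever comparing two different links. This one-line consequence of cancellativity is the idea your proposal is missing.

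By contrast, you apply the induction hypothesis to \emph{every} typical vertex and then propose to reconcile the resulting $(r-1)$-partitions by a ``voting/symmetrization'' argument over common vertices. You correctly identify this as the main obstacle, but you do not resolve it, and it is not a small thing to fill in: one must first show that the partition returned by the stability statement for each near-extremal link is essentially unique (it is only defined up to a permutation and up to $o(x^{r-1})$ edge errors), then establish pairwise compatibility across overlapping links, and then propagate this consistency globally. None of this is needed once one has Lemma~\ref{Lemma:N(s)-cap-Uv=empty}.

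Two smaller but real issues. First, your step (i), that at most $(1+o(1))x$ vertices have positive degree, is false: within the given constraints on $|\partial\mathcal{H}|$ one can pad $\mathcal{H}$ with on the order of $\epsilon x^{r-1}$ pairwise disjoint extra edges, introducing far more than $x$ vertices of positive degree while changing $|\mathcal{H}|$ and $|\partial\mathcal{H}|$ only at lower order. What is true, and what the paper's Lemma~\ref{LEMMA:cancellative-KK-stability-reduction} carefully proves, is that there is a core $U$ of size $(1\pm o(1))x$ carrying almost all of the edges and almost all of the shadow; this reduction is a separate lemma and you should state and prove it rather than assert vertex-count control directly. Second, your derivation of Theorem~\ref{THM:LM-cancellative} from the identity $\sum_v|\partial L_v|=(r-1)|\partial\mathcal{H}|$ and the link inequality alone does not close: bounding $\sum_v d(v)$ by $\sum_v d(v)^{(r-2)/(r-1)}$ requires an additional weighted power-mean step of the form $(\ref{inequ-Thm4.4-04})$, which is precisely what the paper's preliminary Lemma~\ref{LEMMA:inequalities-cancellative-in-LM} supplies and what drives its defect analysis (the sets $V_L$, $\widehat{V}_L$). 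Without these ingredients the ``standard defect analysis'' you invoke does not have anything concrete to run on.
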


\subsection{Expansion of cliques}\label{SUBSEC:cancellative-hypergraph}
For $\ell \ge r\ge 2$ let $\mathcal{K}_{\ell+1}^{r}$ denote the collection of all $r$-graphs $F$ with at most $\binom{{\ell}+1}{2}$
edges such that for some $({\ell}+1)$-set $S$ (called the core of $F$), every pair $\{u,v\}\subset S$ is covered by an edge in $F$.
Let the $r$-graph $H_{\ell+1}^{r}$ be obtained from the complete graph $K_{\ell+1}$ by adding $r-2$ new vertices to every edge.
The $r$-graph $H_{\ell+1}^{r}$ is called the {\em expansion} of $K_{\ell+1}$,
and it is easy to see that $H_{\ell+1}^{r} \in \mathcal{K}_{\ell+1}^{r}$.
Also note that $\{H_{\ell+1}^{2}\} = \mathcal{K}_{\ell+1}^{2} = \{K_{\ell+1}\}$.

The family $\mathcal{K}_{\ell+1}^{r}$ was introduced by Mubayi in \cite{MU06} as a way to extend Tur\'{a}n's theorem to hypergraphs.
He proved that $\textrm{ex}(n,\mathcal{K}_{\ell+1}^{r}) = t_{r}(n,\ell)$, and moreover, $T_{r}(n,\ell)$ is the unique
$\mathcal{K}_{\ell+1}^{r}$-free $r$-graph on $n$ vertices with exactly $t_{r}(n,\ell)$ edges.
Pikhurko~$\cite{PI13}$ improved this result by showing that
$\textrm{ex}(n,H_{\ell+1}^{r}) = t_{r}(n,\ell)$ for sufficiently large $n$
and $T_{r}(n,\ell)$ is also the unique $H_{\ell+1}^{r}$-free $r$-graph on $n$ vertices with exactly $t_{r}(n,\ell)$ edges.
One key tool used by Pikhurko is the following stability theorem,
which extends the Erd\H{o}s-Simonovits stability theorem for graphs \cite{SI68}.

\begin{theorem}[Stability, see \cite{MU06,PI13,LIU19}]\label{THM:old-stability-H}
Let $\ell \ge r\ge 2$ be integers.
For every $\delta > 0$ there exists $\epsilon > 0$ and $n_0 = n_0(\ell,r,\delta)$ such that
the following holds for all $n\ge n_0$.
Suppose that $\mathcal{H}$ is an $H_{\ell+1}^{r}$-free $r$-graph on $n$ vertices with
at least $(1-\epsilon)t_{r}(n,\ell)$ edges.
Then $\mathcal{H}$ is a subgraph of $T_{r}(n,\ell)$ after removing at most $\delta n^r$ edges.
\end{theorem}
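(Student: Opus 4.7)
The plan is to proceed by induction on the uniformity $r$. The base case $r=2$ is exactly the Erd\H{o}s--Simonovits stability theorem for $K_{\ell+1}$-free graphs, since $H_{\ell+1}^{2}=K_{\ell+1}$ and $t_{2}(n,\ell)=|T(n,\ell)|$. Assuming the statement for $(r-1)$-graphs with all relevant clique parameters, the inductive step will invoke it with parameters $(r-1,\ell-1)$, which is legitimate because $\ell\ge r$ implies $\ell-1\ge r-1$. I fix $\delta>0$ and introduce a hierarchy $\epsilon\ll\epsilon'\ll\delta''\ll\delta'\ll\delta$ with $n_{0}$ sufficiently large to absorb all lower-order error terms.

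Given $\mathcal{H}$ as hypothesized, the first step is to iteratively delete vertices of degree below $(1-\sqrt{\epsilon})\cdot r\,t_{r}(n,\ell)/n$; this removes only $O(\sqrt{\epsilon}\,n)$ vertices and at most $\sqrt{\epsilon}\,t_{r}(n,\ell)$ edges. For each surviving vertex $v$, let $N(v)$ be the set of vertices sharing at least one edge with $v$. In $T_{r}(n,\ell)$ one has $|N(v)|=(1-1/\ell)n$; a short argument using $H_{\ell+1}^{r}$-freeness together with the degree lower bound yields $|N(v)|\le (1-1/\ell)n+o(n)$ for most $v$, so the link $L_{v}\subset\binom{N(v)}{r-1}$ has about $\binom{\ell-1}{r-1}(n/\ell)^{r-1}\approx t_{r-1}(|N(v)|,\ell-1)$ edges. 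The pivotal claim is a supersaturation statement: $L_{v}$ contains at most $\epsilon'n^{|V(H_{\ell}^{r-1})|}$ copies of $H_{\ell}^{r-1}$. To see this, any such copy with core $\{v_{1},\dots,v_{\ell}\}$ together with $v$ already contributes the $\binom{\ell}{2}$ edges of an incipient $H_{\ell+1}^{r}$ not incident to $v$; the $\ell$ remaining edges $\{v,v_{i},u_{1}^{(i)},\dots,u_{r-2}^{(i)}\}$ can be embedded greedily with $r-2$ fresh vertices per edge, since each codegree $d(v,v_{i})$ is $\Omega(n^{r-2})$. After discarding the $O(\epsilon'n^{r-1})$ edges of $L_{v}$ participating in any such copy, the inductive hypothesis at $(r-1,\ell-1)$ gives an $(\ell-1)$-partition $\mathcal{P}_{v}$ of $N(v)$ capturing $L_{v}$ up to $\delta'n^{r-1}$ exceptions.

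To stitch the per-vertex partitions into a single global $\ell$-partition, I declare $u\sim w$ whenever $|L_{u}\triangle L_{w}|\le\delta''n^{r-1}$. In $T_{r}(n,\ell)$ two vertices in the same class have nearly identical links while two in different classes have links differing by $\Omega(n^{r-1})$; the per-vertex stability from the previous step transfers this dichotomy to $\mathcal{H}$, so $\sim$ has essentially $\ell$ equivalence classes. Labeling them $V_{1},\dots,V_{\ell}$ and assigning the previously deleted $O(\sqrt{\epsilon}\,n)$ vertices arbitrarily produces an $\ell$-partition of $V(\mathcal{H})$; a final double count then checks that all but $\delta n^{r}$ edges of $\mathcal{H}$ are transversal, placing $\mathcal{H}$ inside the associated complete $\ell$-partite $r$-graph $T_{r}(n,\ell)$ up to a $\delta n^{r}$-edge deletion.

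The main obstacle is the supersaturation claim for $L_{v}$: each copy of $H_{\ell}^{r-1}$ in the link must be lifted to a copy of $H_{\ell+1}^{r}$ in $\mathcal{H}$, and this demands $r-2$ \emph{fresh} vertices for each of the $\ell$ new edges through $v$, all pairwise distinct and outside the existing core. Because $|V(H_{\ell}^{r-1})|=\ell+(r-3)\binom{\ell}{2}$ is constant in $n$ while each relevant codegree is $\Omega(n^{r-2})$, a Pikhurko-style greedy embedding succeeds; the quantitative relationships $\epsilon\ll\epsilon'\ll\delta''\ll\delta'\ll\delta$ must be balanced with care so that, after discarding the exceptional edges at every stage, the inductive hypothesis applies cleanly at parameters $(r-1,\ell-1)$ and the equivalence $\sim$ really has the correct number of classes.
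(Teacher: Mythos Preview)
The paper does not prove this theorem; it is quoted from Mubayi, Pikhurko, and Liu and used as a black box in Section~\ref{SEC:proof-KK-stability-clique-expansion} (the footnote there even remarks that one could avoid it by an inductive argument analogous to Lemma~\ref{LEMMA:cancellative-KK-stability-after-reduce}, but no such argument is given). So there is no ``paper's own proof'' to compare against, and I assess your plan on its merits and against the published proofs.

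Your induction-on-$r$ via link stability is the right shape for Mubayi's argument for the \emph{family} $\mathcal{K}_{\ell+1}^{r}$, but transplanting it directly to the single hypergraph $H_{\ell+1}^{r}$ leaves two genuine gaps.

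\textbf{The bound on $|N(v)|$.} To invoke the inductive hypothesis on $L_v$ you need $|L_v|\ge(1-\epsilon')\,t_{r-1}(|N(v)|,\ell-1)$, and for this the \emph{upper} bound $|N(v)|\le(1-1/\ell+o(1))n$ is essential. The lower bound follows from Tur\'an once $L_v$ is (approximately) $H_{\ell}^{r-1}$-free, but the ``short argument'' you invoke for the upper bound does not exist as stated: a vertex in an $H_{\ell+1}^{r}$-free hypergraph can have $|N(v)|=n-1$ even with near-maximum degree. An averaging route via $\sum_v|N(v)|=2|\partial_{r-2}\mathcal H|$ would require first showing that the $2$-shadow is (close to) $K_{\ell+1}$-free, which is exactly Pikhurko's reduction to the $\mathcal K_{\ell+1}^{r}$-free case; so the main content has been swept under the rug.

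\textbf{Supersaturation $\Rightarrow$ removal.} The passage ``$L_v$ contains at most $\epsilon' n^{|V(H_\ell^{r-1})|}$ copies of $H_\ell^{r-1}$, so discard the $O(\epsilon' n^{r-1})$ edges participating in such copies'' is a non sequitur: few copies does not imply few participating edges; that implication is precisely the Hypergraph Removal Lemma, which you never name. The route actually taken in the literature is cleaner and avoids Removal: delete the $o(n^{r})$ edges of $\mathcal H$ containing a pair of codegree below a fixed threshold; in the cleaned hypergraph every $2$-covered $(\ell+1)$-set extends greedily to a copy of $H_{\ell+1}^{r}$, so the cleaned hypergraph is $\mathcal K_{\ell+1}^{r}$-free, and one then applies Mubayi's stability for the family. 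Your plan is repairable along these lines, but as written it elides exactly the step that separates $H_{\ell+1}^{r}$ from $\mathcal K_{\ell+1}^{r}$.
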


In \cite{LM19A} the following Kruskal-Katona type result was proved for $\mathcal{K}_{\ell+1}^{r}$-free $r$-graphs.

\begin{theorem}[\cite{LM19A}]\label{THM:LM-clique-expansion}
Let $\ell \ge r\ge 2$ be integers and $\mathcal{H}$ be a $\mathcal{K}_{\ell+1}^{r}$-free $r$-graph.
Suppose that $|\partial\mathcal{H}| = \binom{\ell}{r-1}(x/\ell)^{r-1}$ for some real number $x\ge \ell$.
Then $|\mathcal{H}| \le \binom{\ell}{r}(x/\ell)^{r}$.
In other words,
$$|\mathcal{H}| \le \binom{\ell}{r}\left(\frac{|\partial\mathcal{H}|}{\binom{\ell}{r-1}}\right)^{r/(r-1)}.$$
\end{theorem}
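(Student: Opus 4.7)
The plan is to prove Theorem~\ref{THM:LM-clique-expansion} in two steps: first verify the bound for blow-ups of $K_{\ell}^{r}$ directly via Maclaurin's inequality, and then reduce the general case to this via Zykov-type symmetrization.

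For the blow-up case, suppose $\mathcal{H}$ is a complete $\ell$-partite $r$-graph with part sizes $n_{1}, \ldots, n_{\ell}$. Then $|\mathcal{H}| = e_{r}(n_{1}, \ldots, n_{\ell})$ and $|\partial \mathcal{H}| = e_{r-1}(n_{1}, \ldots, n_{\ell})$, where $e_{k}$ denotes the $k$-th elementary symmetric polynomial on $\ell$ variables. Maclaurin's inequality
\[
\left(\frac{e_{r}}{\binom{\ell}{r}}\right)^{\!1/r}\le \left(\frac{e_{r-1}}{\binom{\ell}{r-1}}\right)^{\!1/(r-1)}
\]
rearranges to $|\mathcal{H}|\le \binom{\ell}{r}\bigl(|\partial \mathcal{H}|/\binom{\ell}{r-1}\bigr)^{r/(r-1)} = \binom{\ell}{r}(x/\ell)^{r}$, as required.

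For a general $\mathcal{K}_{\ell+1}^{r}$-free $\mathcal{H}$, I would apply Zykov-type symmetrization: pick a pair of non-adjacent vertices $u, v$ (with no edge containing both) satisfying $|L_{u}|\ge |L_{v}|$, and replace the link of $v$ by a copy of the link of $u$. Here $\mathcal{K}_{\ell+1}^{r}$-freeness is preserved, since a putative $\mathcal{K}_{\ell+1}^{r}$ in the new graph with both $u$ and $v$ in its core would require $\{u,v\}$ to be covered by some edge (contradicting non-adjacency), while a copy involving at most one of $u, v$ corresponds via the twin relation to a copy already present in $\mathcal{H}$. The swap also increases $|\mathcal{H}|$, and iterating terminates at a graph in which every pair of vertices is covered by some edge; such a graph is a blow-up of an $r$-graph on $k\le \ell$ vertices (otherwise $\ell+1$ pairwise-covered vertices would form a $\mathcal{K}_{\ell+1}^{r}$ core), so the blow-up step applies to conclude.

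The main obstacle is to ensure that the symmetrization move does not decrease the ratio $|\mathcal{H}|^{r-1}/|\partial\mathcal{H}|^{r}$. Although $|\mathcal{H}|$ strictly increases, the shadow $|\partial\mathcal{H}|$ may change in both directions, because the $(r-1)$-subsets touching $v$ are completely rewritten by the swap; a careful double count is required to show that any shadow elements gained are compensated by the corresponding gain in edges. A fallback is to combine symmetrization with a weighted Lagrangian argument \`a la Mubayi--Pikhurko, applying the bound $\sum_{e\in\mathcal{H}}\prod_{v\in e}y_{v}\le \binom{\ell}{r}(\sum_{v}y_{v}/\ell)^{r}$ with weights $y_{v}$ matching the extremal blow-up's uniform distribution, which bridges the ratio directly through each symmetrization step.
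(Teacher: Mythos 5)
Your first step is correct: for a complete $\ell$-partite $r$-graph with part sizes $n_1,\ldots,n_\ell$, one has $|\mathcal{H}|=e_r(n_1,\ldots,n_\ell)$ and $|\partial\mathcal{H}|=e_{r-1}(n_1,\ldots,n_\ell)$, and Maclaurin's inequality (which extends to nonnegative reals by continuity) gives exactly the desired bound. The gap is in your second step, and you in fact name it yourself: you never establish that Zykov symmetrization does not decrease the ratio $|\mathcal{H}|^{r-1}/|\partial\mathcal{H}|^r$. This is not a routine technicality. When you replace $L(v)$ by a copy of $L(u)$, the number of shadow elements through $v$ changes from $|\partial L_{\mathrm{old}}(v)|$ to $|\partial L(u)|$, and these are not comparable in general (the Kruskal--Katona inequality gives only a one-sided relationship between a link's size and its shadow's size). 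Moreover, shadow elements not containing $v$ that were supported only by old edges through $v$ may be lost, and new $v$-edges contribute $(r-1)$-sets $\{v\}\cup T$ that may or may not have been present before. Both $|\mathcal{H}|$ and $|\partial\mathcal{H}|$ can go up, and nothing in your sketch controls their ratio. There is also no termination argument: if the tracked quantity only weakly increases, the process could cycle. Your Lagrangian "fallback" bounds $\sum_{e\in\mathcal{H}}\prod_{v\in e}y_v$ against $(\sum_v y_v)^r$, i.e. against the number of vertices, not against $|\partial\mathcal{H}|$, so it does not directly bridge to the shadow either. The paper takes a genuinely different route (Lemma~\ref{lemma-liu-mubayi-inequality}): it bounds $|\mathcal{H}|$ in terms of $|\partial\mathcal{H}|$ and $\sum_{E\in\partial\mathcal{H}}\sigma(E)$ via a double-counting/H\"older-type estimate, then bounds $\sum_{E\in\partial\mathcal{H}}\sigma(E)$ using the clique structure and the parameter $z$; no symmetrization is involved, which is what makes the shadow tractable. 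To salvage your approach you would need to prove the ratio monotonicity under some symmetrization step, which to my knowledge is not known and is plausibly false.
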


Note that the inequality in Theorem~\ref{THM:LM-clique-expansion} is tight for the $r$-graph $T_{r}(n,m,\ell)$
if $m$ is a multiple of $\ell$.

We prove the following stability result for Theorem~\ref{THM:LM-clique-expansion}.

\begin{theorem}\label{THM:stability-expansion-clique-KK-type}
Let $\ell \ge r\ge 2$ be integers.
For $\delta>0$ there exist $\epsilon > 0$ and $n_0$ such that the following holds for all real numbers $x\ge n_0$.
Suppose that $\mathcal{H}$ is an $\mathcal{K}_{\ell+1}^{r}$-free $r$-graph with
\begin{align}\label{equ:expansion-assumption}
|\partial\mathcal{H}| = \binom{\ell}{r-1}\left(\frac{x}{\ell}\right)^{r-1}
\quad{\rm and}\quad
|\mathcal{H}| \ge (1-\epsilon)\binom{\ell}{r}\left(\frac{x}{\ell}\right)^{r}.
\end{align}
Then there exists a set $V'\subset V(\mathcal{H})$ of size at most $\lceil x \rceil$ such that
$\mathcal{H}$ is a subgraph of a complete $\ell$-partite $r$-graph on $V'$
after removing at most $\delta x^r$ edges.
\end{theorem}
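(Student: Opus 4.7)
The plan is to mirror the three-phase structure used by Keevash in his proof of Theorem~\ref{THM:Keevash-stability}, adapted to the Tur\'an-type setting. In the first phase we localize $\mathcal{H}$ onto a vertex set of size close to $x$ by iteratively removing low-degree vertices. In the second phase we invoke the Mubayi--Pikhurko stability theorem (Theorem~\ref{THM:old-stability-H}), which applies because every $\mathcal{K}_{\ell+1}^r$-free $r$-graph is also $H_{\ell+1}^r$-free. In the third phase we prune the vertex set down to $\lceil x \rceil$ by discarding a few low-degree vertices from the resulting $\ell$-partite structure.

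For the localization, fix constants $\epsilon \ll \eta \ll \delta$ depending only on $\ell$ and $r$. Set $d_0 := (1-\eta)\binom{\ell-1}{r-1}(x/\ell)^{r-1}$, a $(1-\eta)$-fraction of the common vertex degree in the extremal graph $T_r(x,\ell)$. First delete all isolated vertices of $\mathcal{H}$, then iteratively delete any vertex of current degree strictly less than $d_0$ until no such vertex remains; let $\mathcal{H}^*$ on $V^*$ be the resulting subgraph. Writing $|\partial\mathcal{H}^*| = \binom{\ell}{r-1}(y^*/\ell)^{r-1}$ for some $y^* \leq x$, the lower bound $r|\mathcal{H}^*| \geq |V^*| d_0$, combined with Theorem~\ref{THM:LM-clique-expansion} applied to $\mathcal{H}^*$ and the identity $r\binom{\ell}{r} = \ell\binom{\ell-1}{r-1}$, yields
\begin{align*}
|V^*| \;\leq\; \frac{r|\mathcal{H}^*|}{d_0} \;\leq\; \frac{r\binom{\ell}{r}(y^*/\ell)^{r}}{d_0} \;=\; \frac{y^*}{1-\eta}\left(\frac{y^*}{x}\right)^{r-1} \;\leq\; \frac{x}{1-\eta}.
\end{align*}

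The technically delicate part of this phase is to simultaneously bound the total number of deleted edges by $\delta_1 x^r$ for some $\delta_1=O(\eta+\epsilon)$. The plan is to track the edge-deficit $\binom{\ell}{r}(x/\ell)^r - |\mathcal{H}_i|$ and the shadow-deficit $\binom{\ell}{r-1}(x/\ell)^{r-1} - |\partial\mathcal{H}_i|$ throughout the iteration, and to use the sharpness (a defect version) of Theorem~\ref{THM:LM-clique-expansion} to relate the two: deleting a vertex $v$ of degree $<d_0$ decreases $|\mathcal{H}_i|$ by less than $d_0$ while forcing a proportional drop in the shadow via the power-mean inequality underlying that bound. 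Once this edge-loss control is in place, $\mathcal{H}^*$ is an $H_{\ell+1}^r$-free $r$-graph on $|V^*|$ vertices with at least $(1-\eta')\,t_r(|V^*|,\ell)$ edges, so (taking $x \geq n_0(\ell,r,\delta)$) Theorem~\ref{THM:old-stability-H} gives that $\mathcal{H}^*$ is a subgraph of $T_r(|V^*|,\ell)$ after removing at most $\delta_2 x^r$ further edges. This Tur\'an graph is complete $\ell$-partite on some partition $V^* = U_1 \cup \cdots \cup U_\ell$.

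For the third phase, since $|V^*| - \lceil x \rceil \leq \eta x/(1-\eta)$, we greedily drop that many vertices of lowest degree in $\mathcal{H}^*$ to obtain $V' \subseteq V^*$ with $|V'|\leq \lceil x\rceil$. Each dropped vertex has degree at most the average $r|\mathcal{H}^*|/|V^*|=O(x^{r-1})$, so only $O(\eta)\,x^r$ edges are lost in this step. Setting $V_i' = U_i \cap V'$, the surviving edges of $\mathcal{H}$ lie in the complete $\ell$-partite $r$-graph on $V_1',\ldots,V_\ell'$, and the total number of removed edges is at most $(\delta_1+\delta_2+O(\eta))\,x^r \leq \delta\, x^r$ for appropriate choice of constants. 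The main obstacle throughout is the edge-loss control in the localization phase, since a priori $|V(\mathcal{H})|$ could be arbitrary; that control requires a careful defect analysis of Theorem~\ref{THM:LM-clique-expansion}, while the rest of the argument is an adaptation of well-understood stability techniques.
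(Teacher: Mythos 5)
Your high-level plan has the same shape as the paper's: localize $\mathcal{H}$ onto roughly $x$ vertices, invoke the Mubayi--Pikhurko stability theorem (Theorem~\ref{THM:old-stability-H}) on the localized graph, then trim down to $\lceil x\rceil$ vertices. The second and third phases are fine. The genuine gap is in the localization.

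You localize by iteratively deleting vertices of current degree below $d_0$, and you correctly deduce the upper bound $|V^*|\le x/(1-\eta)$ from the min-degree condition together with Theorem~\ref{THM:LM-clique-expansion}. But you have no corresponding lower bound on $|V^*|$ (or on $|\mathcal{H}^*|$), and without one the process could in principle cascade and discard almost all edges: the number of non-isolated vertices of $\mathcal{H}$ is only bounded by $(r-1)|\partial\mathcal{H}|=\Theta(x^{r-1})$, so the number $k$ of deleted vertices is a priori much larger than $x$. The argument you sketch to control the loss --- that each deletion ``forces a proportional drop in the shadow'' via a ``defect version of Theorem~\ref{THM:LM-clique-expansion}'' --- does not hold: removing a vertex $v$ of current degree $d<d_0$ decreases the shadow by at most $|\partial L(v)|$, which by Kruskal--Katona scales like $d^{(r-2)/(r-1)}$ and can be arbitrarily small, so the shadow-deficit need not grow in step with the edge-deficit. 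Moreover a genuine ``defect version'' of Theorem~\ref{THM:LM-clique-expansion} (saying that near-extremal $\mathcal{K}_{\ell+1}^r$-free graphs for the KK bound are structurally close to Tur\'an-type graphs) is precisely Theorem~\ref{THM:stability-expansion-clique-KK-type}, so invoking it here is circular.

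What is missing is a max-degree bound of the form $\Delta(\mathcal{H})\le (1+o(1))\binom{\ell-1}{r-1}(x/\ell)^{r-1}$ together with degree concentration. The paper gets exactly this from the Liu--Mubayi machinery of Lemma~\ref{lemma-liu-mubayi-inequality}: Claims~\ref{claim-size-sun-sigma(S)} and~\ref{claim-size-z} pin down the quantity $z(\mathcal{H})$, which yields the sharp bound $d(v)\le(1+\ell^2 r\epsilon^{1/2})\frac{\ell-r+1}{\ell}|\partial\mathcal{H}|$ in~(\ref{equ-upper-max-degree-v}); the paper then does not iterate vertex deletion at all but instead defines $\mathcal{G}$ as the set of shadow elements with near-maximal degree sum, takes $U=\partial_{r-2}\mathcal{G}$, and shows directly (Claims~\ref{claim-d(u)-lower}--\ref{claim-size-hat-G}) that every vertex of $U$ has high degree, $|U|\le(1+o(1))x$, and $|\mathcal{H}[U]|$ captures all but $O(\epsilon^{1/4})x^r$ edges. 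The crude bound $d(v)\le(\ell-r+1)|\partial\mathcal{H}|=\ell\binom{\ell-1}{r-1}(x/\ell)^{r-1}$, which is available without the $z$-analysis, is weaker by a factor $\ell$ and is not enough to make your cascade argument close. You either need to import the $z$-based degree concentration from Lemma~\ref{lemma-liu-mubayi-inequality} --- at which point you are essentially running the paper's proof --- or you need a different idea to bound the edge loss, which your write-up does not supply.
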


This paper is organized as follows.
In Section~\ref{SEC:proof-KK-stability-cancellative}, we prove Theorem~\ref{THM:stability-cancellative-KK-type}.
Section~\ref{SEC:proof-KK-stability-clique-expansion}, we prove Theorem~\ref{THM:stability-expansion-clique-KK-type}.
In Section~\ref{SEC:remarks}, we include some concluding remarks.

\section{Cancellative hypergraphs}\label{SEC:proof-KK-stability-cancellative}
We prove Theorem~\ref{THM:stability-cancellative-KK-type} in this section.

\subsection{Preliminaries}\label{SUBSEC:prelim-cancellative}
For every integer $i\in[r-1]$ the {\em $i$-th shadow} $\partial_{i}\mathcal{H}$ of an $r$-graph $\mathcal{H}$ is
\begin{align}
\partial_{i}\mathcal{H} = \left\{ A\in \binom{V(\mathcal{H})}{r-i}\colon \exists B\in \mathcal{H} \text{ such that }A\subset B \right\}. \notag
\end{align}
The $(r-1)$-graph $\partial_{1}\mathcal{H}$ is also called the shadow of $\mathcal{H}$ and denoted by $\partial\mathcal{H}$.
For a vertex $v \in V(\mathcal{H})$ the {\em link} of $v$ in $\mathcal{H}$ is
\begin{align}
L_{\mathcal{H}}(v) = \left\{ A\in \partial\mathcal{H}\colon \{v\}\cup A \subset \mathcal{H} \right\}. \notag
\end{align}
The degree of $v$ is $d_{\mathcal{H}}(v) = |L_{\mathcal{H}}(v)|$.
For a set $S\subset V(\mathcal{H})$ of size at most $r-1$ the {\em neighborhood} of $S$ is
\begin{align}
N_{\mathcal{H}}(S) = \left\{ v\in V(\mathcal{H})\setminus S\colon \exists A\in \mathcal{H} \text{ such that } S\cup \{v\}\subset A \right\}. \notag
\end{align}
When $S = \{v\}$ for some $v\in V(\mathcal{H})$ we write $N_{\mathcal{H}}(v)$ instead of $N_{\mathcal{H}}(\{v\})$.
The {\em degree sum} of $S$ is defined as
\begin{align}
\sigma_{\mathcal{H}}(S) = \sum_{v\in S}d_{\mathcal{H}}(v). \notag
\end{align}
For every hypergraph $\mathcal{H}$ let
\begin{align}
\hat{\sigma}_{\mathcal{H}} = \max\left\{ \sigma_{\mathcal{H}}(E)\colon E \in \mathcal{H} \right\}. \notag
\end{align}
We will omit the subscript $\mathcal{H}$ from the notations above if it is clear from context.

We say a set $S$ of vertices is {\em $2$-covered} in $\mathcal{H}$
if every pair of vertices in $S$ is contained in some edge of $\mathcal{H}$,
or equivalent, if $S$ induces a complete graph in the graph $\partial_{r-2}\mathcal{H}$.

For cancellative hypergraphs we have the following lemma for $2$-covered sets.

\begin{lemma}[\cite{LM19A}]\label{LEMMA:cancellative-clique-in-r-2-shadow}
Let $r\ge 2$ be an integer and $\mathcal{H}$ be a cancellative $r$-graph.
Suppose that $S \subset V(\mathcal{H})$ is a $2$-covered set.
Then $L(v) \cap L(u) = \emptyset$ for every pair $\{u,v\} \subset S$.
In particular, $\sigma(S) = \sum_{v\in S}d(v) \le |\partial\mathcal{H}|$.
\end{lemma}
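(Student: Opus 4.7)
My plan is to establish the disjointness of links by a short contradiction argument using the cancellative property directly, and then to derive the degree-sum bound as an immediate corollary.

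For the main claim, I fix distinct $u,v\in S$ and suppose toward contradiction that some $(r-1)$-set $T$ lies in $L(u)\cap L(v)$. By the definition of the link this produces two edges $B:=\{u\}\cup T$ and $C:=\{v\}\cup T$ of $\mathcal{H}$; since $|B|=|C|=r$, necessarily $u,v\notin T$, so in particular $B\neq C$. Because $S$ is $2$-covered, there is a third edge $A\in\mathcal{H}$ with $\{u,v\}\subset A$. The key observation is that $u\in A$ yields $A\cup B=A\cup T$, and $v\in A$ yields $A\cup C=A\cup T$, so $A\cup B=A\cup C$. The cancellative property then forces $B=C$, contradicting $u\notin C$ (equivalently $v\notin B$). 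Hence $L(u)\cap L(v)=\emptyset$.

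The ``in particular'' part falls out immediately: each $L(v)$ is a subset of $\partial\mathcal{H}$, and the disjointness just proved gives
$$\sigma(S)=\sum_{v\in S}|L(v)|=\Big|\bigcup_{v\in S}L(v)\Big|\le|\partial\mathcal{H}|.$$

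I do not foresee any real obstacle. The only bookkeeping is checking that $B$ and $C$ are genuinely distinct edges (handled by the cardinality of $T$) and that $A,B,C$ truly realize the $A\cup B=A\cup C$ pattern. One could equivalently phrase the contradiction via the $\mathcal{T}_r$-free formulation, observing that $B\triangle C=\{u,v\}\subseteq A$ and $|A\cup B\cup C|\le(r+1)+(r-2)=2r-1$, but the direct ``$A\cup B=A\cup C\Rightarrow B=C$'' form is the cleanest route.
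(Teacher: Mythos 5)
Your proof is correct. The paper cites this lemma from its companion work without reproving it, so there is no in-paper argument to compare against; but your route---extracting from a common $T\in L(u)\cap L(v)$ the three edges $A,B=\{u\}\cup T,C=\{v\}\cup T$ with $A\cup B=A\cup C$ and $B\neq C$, contradicting the cancellative definition, then reading off the degree-sum bound from the pairwise disjointness of the $L(v)\subseteq\partial\mathcal{H}$---is exactly the natural argument and mirrors the contradiction pattern the paper does write out in its proof of Lemma~\ref{Lemma:N(s)-cap-Uv=empty}.
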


The following inequalities which can be found in {\cite[Section 4.1]{LM19A}} will be important for our proofs.

\begin{lemma}[\cite{LM19A}]\label{LEMMA:inequalities-cancellative-in-LM}
Let $r\ge 2$ be an integer, $\mathcal{H}$ be a cancellative $r$-graph, and $E\in \mathcal{H}$ be an edge with $\sigma(E) = \hat{\sigma}$.
Then
\begin{align}\label{inequ-Thm4.4-01}
|\mathcal{H}|
& \le \frac{|\partial \mathcal{H}|^{\frac{r-2}{r-1}}}{r(r-1)^{1/(r-1)}}
     \left(\left(|\partial \mathcal{H}|-\frac{\hat{\sigma}}{r}\right)\hat{\sigma} \right)^{\frac{1}{r-1}},
\end{align}
\begin{align}\label{inequ-Thm4.4-02}
|\mathcal{H}|
& \le \frac{|\partial \mathcal{H}|^{\frac{r-2}{r-1}}}{r(r-1)^{1/(r-1)}}
     \left( \sum_{v\in E}d(v)\left(\hat{\sigma}-d(v)\right) + \left(|\partial \mathcal{H}| - \hat{\sigma} \right)\hat{\sigma} \right)^{\frac{1}{r-1}},
\end{align}
\begin{align}\label{inequ-Thm4.4-03}
|\mathcal{H}|
& \le \frac{|\partial \mathcal{H}|^{\frac{r-2}{r-1}}}{r(r-1)^{1/(r-1)}}
      \left(\sum_{v \in E}\sum_{S \in L(v)}\sigma(S) + \sum_{S \in \partial\mathcal{H}\setminus \bigcup_{v\in E}L(v)}\sigma(S) \right)^{\frac{1}{r-1}},
\end{align}
and
\begin{align}\label{inequ-Thm4.4-04}
\frac{1}{r(r-1)} \sum_{v \in V(\mathcal{H})}\left(d(v)\right)^{\frac{1}{r-1}}|\partial L(v)|
\le \left(\frac{|\partial \mathcal{H}|}{r} \right)^{\frac{r}{r-1}}.
\end{align}
\end{lemma}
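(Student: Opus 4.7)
The plan is induction on $r$; the base case $r=2$ is the Erd\H{o}s-Simonovits stability of Mantel's theorem (a triangle-free graph on $x$ non-isolated vertices with at least $(1-\epsilon)x^2/4$ edges is close to bipartite), so assume $r\ge 3$ and that the theorem holds in dimension $r-1$. Let $\mathcal{H}$ satisfy the hypothesis, set $a=|\partial\mathcal{H}|=x^{r-1}/r^{r-2}$, and fix an edge $E^*=\{v_1,\dots,v_r\}\in\mathcal{H}$ realizing $\sigma(E^*)=\hat{\sigma}$.

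The first task is to extract stability from Lemma~\ref{LEMMA:inequalities-cancellative-in-LM}. By Lemma~\ref{LEMMA:cancellative-clique-in-r-2-shadow} the set $E^*$ is $2$-covered, so $\hat{\sigma}\le a$; writing $\hat{\sigma}=ta$ with $t\in[0,1]$, inequality (\ref{inequ-Thm4.4-01}) combined with $|\mathcal{H}|\ge (1-\epsilon)(x/r)^r$ reduces, after routine simplification, to $t(r-t)\ge (1-\epsilon)^{r-1}(r-1)$. Since $t(r-t)$ attains the value $r-1$ at $t=1$ with non-vanishing derivative (for $r\ge 3$), this forces $t\ge 1-O(\epsilon)$, so $\hat{\sigma}=(1-O(\epsilon))a$. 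Substituting into inequality (\ref{inequ-Thm4.4-02}) and writing $\sum_{v\in E^*}d(v)(\hat{\sigma}-d(v))=\hat{\sigma}^2-\sum_{v\in E^*}d(v)^2$, which under the constraint $\sum d(v)=\hat{\sigma}$ is maximized only when all $d(v)=\hat{\sigma}/r$, a quadratic stability argument gives $\sum_{v\in E^*}(d(v)-\hat{\sigma}/r)^2=O(\epsilon)a^2$, so each $v_i$ has $d(v_i)=(1\pm O(\sqrt{\epsilon}))(x/r)^{r-1}$.

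The second task uses the observation (immediate from the definition) that every link $L(v)$ is itself a cancellative $(r-1)$-graph. Applying Theorem~\ref{THM:LM-cancellative} in dimension $r-1$ gives $|\partial L(v)|\ge (r-1)\,d(v)^{(r-2)/(r-1)}$, hence $d(v)^{1/(r-1)}|\partial L(v)|\ge (r-1)d(v)$; summing over $v$ and comparing to inequality (\ref{inequ-Thm4.4-04}) recovers $|\mathcal{H}|\le (x/r)^r$ with equality only when the pointwise bound is tight at every $v$. Near-equality therefore forces $|\partial L(v_i)|=(1\pm O(\sqrt{\epsilon}))(r-1)(x/r)^{r-2}$ for each $v_i\in E^*$. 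Setting $y=(r-1)x/r$, each such $L(v_i)$ satisfies the hypothesis of the theorem in dimension $r-1$ with parameter $y$ and tolerance $\epsilon'=O(\sqrt{\epsilon})$, so the inductive hypothesis yields a set $V'_i\subseteq V(L(v_i))$ with $|V'_i|\le \lceil y\rceil$ and an $(r-1)$-partition $U_{i,1}\cup\cdots\cup U_{i,r-1}=V'_i$ such that $L(v_i)$ is transversal to it after deleting at most $\delta' y^{r-1}$ edges.

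The final task is to glue the $r$ local partitions into a single $r$-partition. By Lemma~\ref{LEMMA:cancellative-clique-in-r-2-shadow} the $L(v_i)$'s are pairwise edge-disjoint, and $v_j\in V(L(v_i))$ for every $j\ne i$ since $E^*\in\mathcal{H}$. For each $j$ define the global part $V_j$ to consist of those $u$ such that, for every $i\ne j$, $u$ lies in the same $U_{i,\cdot}$ as $v_j$; a consistency check across any two links (valid up to the inductive error $\delta'$) shows the $V_j$ are pairwise disjoint with $|V_j|=(1\pm O(\sqrt{\epsilon}))x/r$, so $V':=V_1\cup\cdots\cup V_r$ has $|V'|\le \lceil x\rceil$. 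An edge of $\mathcal{H}$ not transversal to $V_1,\dots,V_r$ either fails transversality inside some $L(v_i)$ (contributing at most $r\delta' y^{r-1}$ edges) or has its $(r-1)$-shadow in $\partial\mathcal{H}\setminus\bigcup_i L(v_i)$, whose size is $|\partial\mathcal{H}|-\hat{\sigma}=O(\epsilon)a$; these contributions sum to at most $\delta x^r$. The main obstacle is this gluing step: the $(r-1)$-partition of each $L(v_i)$ is only defined modulo a $\delta'$-fraction of bad edges, so the label of which $U_{i,j}$ a given vertex belongs to is not canonical, and one needs a quantitative matching lemma across all $r$ links at once to ensure the local partitions are compatible without the errors compounding.
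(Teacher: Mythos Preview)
Your proposal does not address the stated lemma at all. Lemma~\ref{LEMMA:inequalities-cancellative-in-LM} is a collection of four deterministic inequalities (\ref{inequ-Thm4.4-01})--(\ref{inequ-Thm4.4-04}) bounding $|\mathcal{H}|$ in terms of shadow and degree data for an arbitrary cancellative $r$-graph; it contains no $\epsilon$, no $\delta$, no stability conclusion, and no structural assertion about partitions. What you have written is a proof sketch for Theorem~\ref{THM:stability-cancellative-KK-type}, the Kruskal--Katona stability theorem for cancellative hypergraphs, which \emph{uses} Lemma~\ref{LEMMA:inequalities-cancellative-in-LM} as a black box. You have mistaken the target statement.

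As for the lemma itself, the paper does not prove it: it is quoted from \cite[Section~4.1]{LM19A} and stated without proof. So there is no ``paper's own proof'' to compare against. If you were actually asked to supply a proof of the four inequalities, the relevant ingredients are: for (\ref{inequ-Thm4.4-04}), apply Theorem~\ref{THM:LM-cancellative} to each link $L(v)$ (which is cancellative of uniformity $r-1$) to get $|\partial L(v)|\ge (r-1)d(v)^{(r-2)/(r-1)}$, then use the disjointness of links over an edge (Lemma~\ref{LEMMA:cancellative-clique-in-r-2-shadow}) and a power-mean/H\"older step; inequalities (\ref{inequ-Thm4.4-01})--(\ref{inequ-Thm4.4-03}) are successive refinements obtained by bounding $\sum_{S\in\partial\mathcal{H}}\sigma(S)$ via the maximality of $\hat{\sigma}$ and the partition of $\partial\mathcal{H}$ into the $L(v)$'s for $v\in E$ together with the remainder. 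None of this appears in your write-up.
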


We will also use the following property of cancellative hypergraphs.

\begin{lemma}\label{Lemma:N(s)-cap-Uv=empty}
Let $r \ge 2$ be an integer and $\mathcal{H}$ be a cancellative $r$-graph.
Then for every $v\in V(\mathcal{H})$ and every $A \in L(v)$ we have $N(A) \cap N(v) = \emptyset$.
In other words, $N(A) \subset V(\mathcal{H})\setminus N(A)$.
\end{lemma}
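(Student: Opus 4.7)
The plan is a direct proof by contradiction using the $\mathcal{T}_r$-freeness characterization of cancellative $r$-graphs. Suppose towards contradiction that some vertex $u$ lies in $N(A) \cap N(v)$. Membership $u \in N(v)$ forces $u \in V(\mathcal{H}) \setminus \{v\}$, so in particular $u \neq v$, and it yields an edge $C \in \mathcal{H}$ with $\{u,v\} \subset C$. Membership $u \in N(A)$ yields an edge of $\mathcal{H}$ containing $A \cup \{u\}$, which must equal $A \cup \{u\}$ itself since $|A| = r-1$. The assumption $A \in L(v)$ supplies a third edge $\{v\} \cup A \in \mathcal{H}$.

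With this data in hand, I would exhibit the three candidate edges
\[
E_1 = \{v\} \cup A, \qquad E_2 = \{u\} \cup A, \qquad E_3 = C,
\]
and verify they witness a configuration from $\mathcal{T}_r$. First, $E_1 \triangle E_2 = \{u,v\} \subset E_3$ is immediate. Second, $E_1,E_2,E_3$ are pairwise distinct: $E_1 \neq E_2$ since $u \neq v$; $E_1 \neq E_3$ and $E_2 \neq E_3$ because $u \notin A$ (by $u \in N(A)$) and $v \notin A$ (since $\{v\} \cup A$ is an edge of size $r$). Third, the total number of vertices used satisfies
\[
|E_1 \cup E_2 \cup E_3| \le |A| + |\{u,v\}| + |C \setminus \{u,v\}| = (r-1) + 2 + (r-2) = 2r-1.
\]
Hence $\{E_1, E_2, E_3\}$ is a copy of a member of $\mathcal{T}_r$ inside $\mathcal{H}$, contradicting the hypothesis that $\mathcal{H}$ is cancellative.

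Every step is a one-line verification, so I do not expect any genuine obstacle; the only point requiring care is confirming $u \neq v$ (which keeps the symmetric difference nontrivial and ensures the three edges are pairwise distinct) and the $2r-1$ vertex bound (needed so that the configuration really lies in the family $\mathcal{T}_r$ as defined).
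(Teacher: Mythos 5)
Your proof is correct and follows essentially the same route as the paper's: both exhibit the three edges $A\cup\{v\}$, $A\cup\{u\}$, and an edge containing $\{u,v\}$, and derive a contradiction with cancellativity. The only difference is that you spell out the $\mathcal{T}_r$-free verification (distinctness, symmetric-difference containment, and the $2r-1$ vertex bound) where the paper leaves these routine checks implicit.
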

\begin{proof}[Proof of Lemma~\ref{Lemma:N(s)-cap-Uv=empty}]
Suppose to the contrary that there exists $A\in L(v)$ and $u\in N(v)$ such that $u\in N(A)$.
Since $|A| = r-1$, $|A\cup \{u\}| = r$.
So, $A\cup \{u\}$ is an edge in $\mathcal{H}$.
On the other hand, since $A\in L(v)$, $A\cup \{v\}$ is also an edge in $\mathcal{H}$.
However, $u\in N(v)$ implies that there is an edge $B\in \mathcal{H}$ such that $\{u,v\} \subset B$,
which contradicts the assumption that $\mathcal{H}$ is cancellative.
\end{proof}

The following easy lemma will help us to simplify some calculations.

\begin{lemma}\label{LEMMA:function-concentrate}
Let $V$ be a finite set, $f\colon V \to \mathbb{R}$ be a map, and $\delta_1, \delta_2 > 0$ be two real numbers.
Let $\bar{f} = \left(\sum_{v\in V}f(v)\right)/|V|$ be the average value of $f$ on $V$, and
suppose that $\max_{v\in V}\{f(v)\} \le \bar{f} + \delta_2$.
Then the set  $V_{s} = \{v\in V\colon f(v) \le \bar{f}-\delta_1\}$ satisfies
\begin{align}
|V_{s}| \le \frac{\delta_2}{\delta_1+\delta_2} |V|. \notag
\end{align}
\end{lemma}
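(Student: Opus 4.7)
The plan is a one-shot averaging argument. I would start from the definition of the mean, rewriting $\sum_{v\in V} f(v) = \bar f\,|V|$, and then split the sum according to whether $v$ belongs to the "small" set $V_s$ or its complement. This gives
\begin{align}
\bar f\,|V| \;=\; \sum_{v\in V_s} f(v) \;+\; \sum_{v\in V\setminus V_s} f(v). \notag
\end{align}

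Next, I would upper-bound each piece by the strongest pointwise bound available. On $V_s$ the definition gives $f(v)\le \bar f-\delta_1$, and on $V\setminus V_s$ the hypothesis $\max_v f(v)\le \bar f+\delta_2$ gives $f(v)\le \bar f+\delta_2$. Substituting yields
\begin{align}
\bar f\,|V| \;\le\; |V_s|(\bar f-\delta_1) + (|V|-|V_s|)(\bar f+\delta_2)
\;=\; \bar f\,|V| + \delta_2 |V| - (\delta_1+\delta_2)|V_s|, \notag
\end{align}
and rearranging isolates $|V_s|\le \frac{\delta_2}{\delta_1+\delta_2}|V|$, which is the desired bound.

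There is essentially no obstacle here: the lemma is a direct linear consequence of the definition of average together with the given one-sided bound on the maximum, and no structural fact about $V$ or $f$ is used. The only thing to be careful about is the direction of the inequalities (using the upper bound $\bar f-\delta_1$ on $V_s$, where a weaker bound would suffice, versus using the hypothesized upper bound $\bar f+\delta_2$ on the complement), so that when we rearrange we get $|V_s|$ on the correct side of the inequality.
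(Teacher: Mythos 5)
Your proof is correct and is essentially identical to the paper's: both split $\sum_{v\in V} f(v) = \bar f|V|$ into the contributions from $V_s$ and $V\setminus V_s$, bound them pointwise by $\bar f-\delta_1$ and $\bar f+\delta_2$ respectively, and rearrange to isolate $|V_s|$. No substantive difference.
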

\begin{proof}[Proof of Lemma~\ref{LEMMA:function-concentrate}]
By assumption,
\begin{align}
|V|\bar{f}
= \sum_{v\in V}f(v)
= \sum_{v'\in V_{s}}f(v') + \sum_{v \in V\setminus V_{s}}f(v)
& \le |V_{s}| \left(\bar{f}-\delta_1\right) + \left(|V|-|V_{s}|\right)\left(\bar{f}+\delta_2\right) \notag\\
& = |V|\bar{f}+\delta_2|V| - \left(\delta_1+\delta_2\right)|V_{s}|, \notag
\end{align}
which implies that $|V_{s}| \le \delta_2|V|/(\delta_1+\delta_2)$.
\end{proof}

For two nonnegative numbers $x,y\in \mathbb{R}$ and $\epsilon\in [0,1]$
we write $x = (1\pm \epsilon)y$ if $x$ satisfies $(1-\epsilon)y \le x \le (1+\epsilon)y$.

\subsection{Proof of Theorem~\ref{THM:stability-cancellative-KK-type}}
We prove Theorem~\ref{THM:stability-cancellative-KK-type} in this section.
The most technical parts are contained in
proofs of Lemma~\ref{LEMMA:cancellative-KK-stability-reduction} and Lemma~\ref{LEMMA:cancellative-KK-stability-after-reduce}.
In Lemma~\ref{LEMMA:cancellative-KK-stability-reduction} we show that
the proof of Theorem~\ref{THM:stability-cancellative-KK-type} can be reduced to
the same problem with two extra assumptions that $|\partial\mathcal{H}|$ is close to
$|\partial T_{r}(\lceil x\rceil ,r)|$ and $|\mathcal{H}|$ is close to $|T_{r}(\lceil x\rceil,r)|$.
In Lemma~\ref{LEMMA:cancellative-KK-stability-after-reduce} we prove the stability result for
hypergraphs with these two extra assumptions.

\begin{lemma}\label{LEMMA:cancellative-KK-stability-reduction}
Let $r\ge 2$ be an integer, $\epsilon > 0$ be a sufficiently small constant, and $x>0$ be sufficiently large real number.
Suppose that $\mathcal{H}$ is a cancellative $r$-graph satisfying $(\ref{equ:cancellative-assumptions})$.
Then there exists a set $U \subset V(\mathcal{H})$ of size $(1\pm \epsilon_1)x$
such that the induced subgraph of $\mathcal{H}$ on $U$ satisfies
\begin{align}
|\partial (\mathcal{H}[U])|  \ge \left(1-\epsilon_1\right) \frac{x^{r-1}}{r^{r-2}}
\quad{\rm and}\quad
|\mathcal{H}[U]| \ge \left(1- \epsilon_1\right) \left(\frac{x}{r}\right)^{r}, \notag
\end{align}
where $\epsilon_1 = 35r^{4}\epsilon^{1/2}$.
\end{lemma}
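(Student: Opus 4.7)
The plan is to localize the edges of $\mathcal{H}$ to a set $U$ of size roughly $x$ by extracting structural information from the three refined inequalities in Lemma~\ref{LEMMA:inequalities-cancellative-in-LM}, applied to an edge $E=\{v_1,\dots,v_r\}$ achieving $\sigma(E)=\hat\sigma$. The candidate set will be $U:=\bigcup_{i=1}^{r}N(v_i)$; the three inequalities will respectively pin down the value of $\hat\sigma$, the individual degrees $d(v_i)$, and the covering of $\partial\mathcal{H}$ by the links $L(v_i)$.

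First I will apply $(\ref{inequ-Thm4.4-01})$. Lemma~\ref{LEMMA:cancellative-clique-in-r-2-shadow} applied to the trivially $2$-covered edge $E$ makes the links $L(v_1),\dots,L(v_r)$ pairwise disjoint, so $\hat\sigma\le|\partial\mathcal{H}|$; the function $\phi(t)=(|\partial\mathcal{H}|-t/r)\,t$ is then increasing on $[0,|\partial\mathcal{H}|]$ and attains its maximum $(r-1)|\partial\mathcal{H}|^2/r$ at $t=|\partial\mathcal{H}|$, which is exactly the value that reduces $(\ref{inequ-Thm4.4-01})$ to $|\mathcal{H}|\le(x/r)^r$. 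Writing $\hat\sigma=(1-\alpha)|\partial\mathcal{H}|$ and doing a quadratic expansion of $\phi$ around the endpoint, the hypothesis $|\mathcal{H}|\ge(1-\epsilon)(x/r)^r$ substituted into $(\ref{inequ-Thm4.4-01})$ forces $(r-2)\alpha+\alpha^2=O(r\epsilon)$. For $r\ge 3$ this yields $\alpha=O(\epsilon)$, but for $r=2$, where $\phi$ is flat at the endpoint, only $\alpha=O(\sqrt\epsilon)$ is available, and this square-root loss is what ultimately fixes the $\epsilon^{1/2}$ in $\epsilon_1$. Because the links are disjoint, this simultaneously gives $|\partial\mathcal{H}\setminus\bigcup_{i}L(v_i)|=|\partial\mathcal{H}|-\hat\sigma=O(\sqrt\epsilon)\,|\partial\mathcal{H}|$.

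Next I will use $(\ref{inequ-Thm4.4-02})$: the identity $\sum_{v\in E}d(v)(\hat\sigma-d(v))=\hat\sigma^2-\sum_{v\in E}d(v)^2$ and the Cauchy-Schwarz bound $\sum_{v\in E}d(v)^2\ge\hat\sigma^2/r$ together show that the right-hand side of $(\ref{inequ-Thm4.4-02})$ matches the target value $(x/r)^r$ only when all $d(v_i)$ equal $\hat\sigma/r$. Combining $(\ref{inequ-Thm4.4-02})$ with the lower bound on $|\mathcal{H}|$ and the $\hat\sigma$ estimate from the previous step yields $\sum_{v\in E}(d(v)-\hat\sigma/r)^2=O(\sqrt\epsilon)\,|\partial\mathcal{H}|^2$, whence Lemma~\ref{LEMMA:function-concentrate} delivers each $d(v_i)=(1\pm O(\epsilon^{1/4}))(x/r)^{r-1}$.

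Finally, with $U:=\bigcup_{i=1}^{r}N(v_i)$, the containment $L(v_i)\subset\binom{N(v_i)}{r-1}\subset\binom{U}{r-1}$ combined with the shadow-covering estimate above gives $|\partial(\mathcal{H}[U])|\ge|\bigcup_{i}L(v_i)|=\hat\sigma\ge(1-O(\sqrt\epsilon))|\partial\mathcal{H}|$. The edges of $\mathcal{H}$ not captured by $\mathcal{H}[U]$ are those containing at least one $(r-1)$-shadow set from the missing piece $\partial\mathcal{H}\setminus\bigcup_{i}L(v_i)$, and bounding their count via cancellativity (using Lemma~\ref{Lemma:N(s)-cap-Uv=empty} to control codegrees of shadow sets) gives $|\mathcal{H}|-|\mathcal{H}[U]|=O(\sqrt\epsilon)\,x^r$. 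The main obstacle will be the two-sided estimate $|U|=(1\pm\epsilon_1)x$: a naive union bound only gives $|U|\le\sum_{i}|N(v_i)|$, which is too loose since the $N(v_i)$'s overlap heavily, so I will need a Cauchy-Schwarz argument on the occurrence counts $k_u:=|\{S\in\bigcup_{i}L(v_i):u\in S\}|$, together with Lemma~\ref{Lemma:N(s)-cap-Uv=empty}, to force the approximate $r$-partite structure on $U$ that pins $|U|$ close to $x$. Aggregating the polynomial-in-$r$ losses and $\sqrt\epsilon$ decrements from each step gives the stated $\epsilon_1=35r^4\epsilon^{1/2}$.
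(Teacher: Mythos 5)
Your first two steps (pinning $\hat\sigma$ close to $|\partial\mathcal{H}|$ via inequality~\eqref{inequ-Thm4.4-01}, then the individual degrees $d(v_i)$ close to $\hat\sigma/r$ via~\eqref{inequ-Thm4.4-02}) match the paper's Claims~\ref{claim-low-bound-sigma-hat} and~\ref{claim-low-bound-d(v)}, but after that the approach diverges and runs into a genuine gap. The paper does not take $U=\bigcup_i N(v_i)$. Instead, it uses inequality~\eqref{inequ-Thm4.4-03} --- which your plan never invokes --- to show that for most $S\in L(v_i)$ the degree-sum $\sigma(S)$ is nearly maximal, then defines $\mathcal{G}$ to be the collection of such heavy shadow sets and sets $U=\partial_{r-2}\mathcal{G}$. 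The crucial payoff of that choice is Claim~\ref{claim-U-up-bound}'s lower bound $d(u)\ge (1/r - 3r^2\epsilon^{1/2})|\partial\mathcal{H}|$ for every $u\in U$: membership in a heavy $S$ forces $u$ itself to have high degree, because its $r-2$ companions in $S$ are capped by the max-degree bound. Dividing $\sum_{u\in U}d(u)\le r|\mathcal{H}|$ by this uniform degree floor immediately gives $|U|\le (1+O(\epsilon^{1/2}))x$.

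Your set $U=\bigcup_i N(v_i)$ has no such degree floor: a vertex $u\in N(v_i)$ only witnesses that one edge passes through $\{u,v_i\}$, so $d(u)$ can be as small as $1$, and a near-extremal cancellative $\mathcal{H}$ can contain $\Theta(\epsilon x^r)$ ``junk'' edges incident to $v_i$ that drag $\Theta(\epsilon x^r)$ extra vertices into $U$ --- far more than the $O(\epsilon^{1/2}x)$ surplus the lemma tolerates. Your proposed fix (Cauchy-Schwarz on the occurrence counts $k_u$ to ``force the approximate $r$-partite structure on $U$'') is circular: establishing $r$-partiteness of $U$ is exactly the content of the companion Lemma~\ref{LEMMA:cancellative-KK-stability-after-reduce}, and this reduction lemma is supposed to precede it. Similarly, your bound on $|\mathcal{H}\setminus\mathcal{H}[U]|$ rests on controlling the codegree $|N(A)|$ of the few shadow sets $A\notin\bigcup_i L(v_i)$, but Lemma~\ref{Lemma:N(s)-cap-Uv=empty} as stated only controls $N(A)\cap N(v)$ for $A\in L(v)$ and gives no upper bound on $|N(A)|$ itself. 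Finally, a smaller misattribution: the paper dismisses $r=2$ outright via Erd\H{o}s-Simonovits; the $\epsilon^{1/2}$ in $\epsilon_1$ comes from the square-root loss in degree concentration (Claim~\ref{claim-low-bound-d(v)}), not from flatness of $\phi$ at the endpoint when $r=2$.
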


\begin{lemma}\label{LEMMA:cancellative-KK-stability-after-reduce}
Let $r \ge 2$ be an integer.
For every $\delta > 0$ there exists $\epsilon>0$ and $n_0$ such that the following holds for all $n\ge n_0$.
Suppose that $\mathcal{H}$ is a cancellative $r$-graph on $n$ vertices with
\begin{align}\label{equ:assumption-cacnel-after-reduction}
|\partial\mathcal{H}| = (1\pm \epsilon)\frac{n^{r-1}}{r^{r-2}}
\quad{\rm and}\quad
|\mathcal{H}| \ge (1-\epsilon)\left(\frac{n}{r}\right)^{r}.
\end{align}
Then $\mathcal{H}$ is a subgraph of $T_{r}(n,r)$ after removing at most $\delta n^r$ edges.
\end{lemma}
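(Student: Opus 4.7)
The plan is to exploit the fact that inequalities (\ref{inequ-Thm4.4-01})--(\ref{inequ-Thm4.4-04}) from Lemma~\ref{LEMMA:inequalities-cancellative-in-LM} are all simultaneously tight at $T_r(n,r)$ when $\hat{\sigma}=|\partial\mathcal{H}|$ and every vertex in any edge has degree $(n/r)^{r-1}$, and to recover this structure from near-equality. Let $E_0=\{v_1,\ldots,v_r\}$ be an edge realizing $\hat{\sigma}$. Viewing the right-hand side of (\ref{inequ-Thm4.4-01}) as a function of $\hat{\sigma}\in[0,|\partial\mathcal{H}|]$ (the upper bound coming from Lemma~\ref{LEMMA:cancellative-clique-in-r-2-shadow}), the hypothesis $|\mathcal{H}|\ge(1-\epsilon)(n/r)^r$ forces $\hat{\sigma}\ge(1-\eta)|\partial\mathcal{H}|$ for some $\eta=\eta(\epsilon,r)$ tending to $0$ with $\epsilon$, so the links $L(v_1),\ldots,L(v_r)$ are pairwise disjoint subsets of $\partial\mathcal{H}$ covering all but an $\eta$-fraction of it. Plugging this into (\ref{inequ-Thm4.4-02}) and using that $\sum_i d(v_i)(\hat{\sigma}-d(v_i))=\hat{\sigma}^2-\sum_i d(v_i)^2$ is maximized when the $d(v_i)$'s are equal, near-equality then forces $d(v_i)=(1\pm\eta')(n/r)^{r-1}$ for every $i\in[r]$. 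I would then proceed by induction on $r$, with the base case $r=2$ being the classical Erd\H{o}s--Simonovits stability theorem for triangle-free graphs; combining (\ref{inequ-Thm4.4-04}) with the KK-type lower bound $|\partial L(v)|\ge(r-1)d(v)^{(r-2)/(r-1)}$ implicit in Theorem~\ref{THM:LM-cancellative} shows that most $v$ (weighted by $d(v)$) achieve near-equality in the cancellative KK-type bound for $L(v)$, and the inductive hypothesis applied to each $L(v_i)$ yields $|N(v_i)|=(1\pm\eta'')(r-1)n/r$.

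Define the candidate partition $V_i=\{v_i\}\cup\{u\in V(\mathcal{H})\setminus E_0 : u\notin N(v_i)\}$, breaking ties arbitrarily for vertices in multiple or no complements. The double-count $\sum_u|\{i:u\notin N(v_i)\}|=rn-\sum_i|N(v_i)|\approx n$, together with the trivial bound $|\{i:u\notin N(v_i)\}|\le r$ and Lemma~\ref{LEMMA:function-concentrate}, implies that all but $O(\eta'' n)$ vertices belong to exactly one $V_i$. The crucial cancellative rigidity observation is: for any edge $E=\{u_1,\ldots,u_r\}\in\mathcal{H}$ disjoint from $E_0$, if each $A^{(j)}:=E\setminus\{u_j\}$ belongs to some $L(v_{i(j)})$, then the indices $i(j)$ must be pairwise distinct. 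Indeed, if $i(j)=i(j')$ for $j\ne j'$, then the three edges $A^{(j)}\cup\{v_{i(j)}\}$, $A^{(j')}\cup\{v_{i(j)}\}$, $E$ together span $r+1\le 2r-1$ vertices and satisfy $(A^{(j)}\cup\{v_{i(j)}\})\triangle(A^{(j')}\cup\{v_{i(j)}\})=\{u_j,u_{j'}\}\subset E$, a forbidden $\mathcal{T}_r$-configuration. Combined with Lemma~\ref{Lemma:N(s)-cap-Uv=empty} (which gives $u_j\in V(\mathcal{H})\setminus N(v_{i(j)})$), this places the $r$ vertices of such an $E$ into $r$ distinct parts as long as each $u_j$ lies in exactly one $V_i$.

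Finally, the edges violating the partition fall into three categories: edges meeting $E_0$ contribute at most $r\cdot\max_i d(v_i)=O(n^{r-1})$; edges with some $A^{(j)}$ in the residual shadow $\partial\mathcal{H}\setminus\bigcup_i L(v_i)$ contribute at most $\sum_{A\in\mathrm{residual}}|N(A)|=O(\eta\, n^r)$ by the trivial codegree bound $|N(A)|\le n$; and edges through a vertex $u$ with $|\{i:u\notin N(v_i)\}|\ne 1$ contribute at most $O(\eta'' n)\cdot|\partial\mathcal{H}|=O(\eta'' n^r)$ via Lemma~\ref{LEMMA:cancellative-clique-in-r-2-shadow} bounding individual degrees. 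Choosing $\epsilon$ small enough that each of these is $\le\delta n^r/3$ gives the conclusion. The main obstacle will be executing the induction step cleanly: translating the weighted-average near-equality coming from (\ref{inequ-Thm4.4-04}) into individual near-extremality of $L(v_i)$ for each of the $r$ vertices $v_i\in E_0$, which requires a careful concentration argument combined with the explicit values of $d(v_i)$ established in the first paragraph so that the inductive hypothesis can be invoked on each $L(v_i)$ separately.
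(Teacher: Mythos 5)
Your architecture is recognizable — induction on $r$ with Erd\H{o}s--Simonovits as the base case — but it takes a genuinely different route from the paper. The paper applies the induction to a \emph{single} link $L(v)$ of one well-chosen vertex $v$, obtains an $(r-1)$-partition of a set $U_v\subset N(v)$, and takes $V_r=V(\mathcal{H})\setminus U_v$ as the last part; you instead apply the induction to \emph{all} $r$ links and build each part $V_i$ from the complement of $N(v_i)$, glued together by your ``cancellative rigidity'' observation. That observation, namely that if $E\setminus\{u_j\}\in L(v_i)$ and $E\setminus\{u_{j'}\}\in L(v_i)$ for $j\ne j'$ then $(E\setminus\{u_j\})\cup\{v_i\}$, $(E\setminus\{u_{j'}\})\cup\{v_i\}$, $E$ span $r+1\le 2r-1$ vertices and satisfy $\big((E\setminus\{u_j\})\cup\{v_i\}\big)\triangle\big((E\setminus\{u_{j'}\})\cup\{v_i\}\big)=\{u_j,u_{j'}\}\subset E$, is a genuine $\mathcal{T}_r$-configuration and does not appear in the paper; it is a nice symmetrization of Lemma~\ref{Lemma:N(s)-cap-Uv=empty}.

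There is, however, a real gap you do not flag. You cannot invoke the inductive hypothesis directly on $L(v_i)$: Lemma~\ref{LEMMA:cancellative-KK-stability-after-reduce} at level $r-1$ requires the $(r-1)$-graph's shadow to be $(1\pm\epsilon')n'^{\,r-2}/(r-1)^{r-3}$ relative to its \emph{own} vertex count $n'$, while $L(v_i)$ lives on (up to) $n$ vertices with $|\partial L(v_i)|\approx(r-1)(n/r)^{r-2}$, off by the constant factor $\left((r-1)/r\right)^{r-2}$. The paper bridges this by first running the reduction lemma (Lemma~\ref{LEMMA:cancellative-KK-stability-reduction}) on $L(v)$ to extract a subset $U_v\subset N(v)$ of size $\approx(r-1)n/r$ where the hypotheses do hold; your phrase ``the inductive hypothesis applied to each $L(v_i)$ yields $|N(v_i)|=(1\pm\eta'')(r-1)n/r$'' silently assumes this reduction has been done, and the lower bound on $|N(v_i)|$ — which your double-count $\sum_u|\{i:u\notin N(v_i)\}|\approx n$ crucially needs — is exactly what comes out of the reduction, not out of Lemma~\ref{LEMMA:cancellative-KK-stability-after-reduce} alone. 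Two smaller issues: choosing $E_0$ to maximize $\sigma(E_0)$ does not guarantee that each $v_i$ has a near-tight link in the sense of (\ref{inequ-Thm4.4-04}); one should instead pick $E_0\in\mathcal{H}[V_L\cap\widehat{V}_L]$ as the paper does (you acknowledge this as your ``main obstacle,'' but underplay that the paper's $V_L$, $\widehat V_L$ machinery is essentially the fix). And the bookkeeping for vertices in zero or $\ge 2$ complements needs a separate bound on $a_0=|\{u\notin E_0:\,u\in N(v_i)\ \forall i\}|$; the double-count gives $\sum_{k\ge2}a_k\lesssim a_0+O(\eta'' n)$ but $a_0$ itself must be controlled (e.g.\ via Lemma~\ref{LEMMA:cancellative-clique-in-r-2-shadow}: such $u$ makes $E_0\cup\{u\}$ $2$-covered, hence $d(u)\le|\partial\mathcal{H}|-\hat\sigma$, and then a degree-sum argument caps $a_0$). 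With those three steps filled in your route would work and is arguably more symmetric than the paper's, but as written the induction step does not close.
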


First let us show that Lemmas~\ref{LEMMA:cancellative-KK-stability-reduction}
and~\ref{LEMMA:cancellative-KK-stability-after-reduce} imply Theorem~\ref{THM:stability-cancellative-KK-type}.

\begin{proof}[Proof of Theorem~\ref{THM:stability-cancellative-KK-type} using Lemmas~\ref{LEMMA:cancellative-KK-stability-reduction}
and~\ref{LEMMA:cancellative-KK-stability-after-reduce}]
Fix the integer $r\ge 2$ and the constant $\delta > 0$.
Let $\epsilon>0$ be a sufficiently small constant (whose value can be determined in the following proof)
and $x>0$ be a sufficiently large real number.
Let $\mathcal{H}$ be a cancellative $r$-graph satisfying $(\ref{equ:cancellative-assumptions})$.
By Lemma~\ref{LEMMA:cancellative-KK-stability-reduction} there exists a set $U \subset V(\mathcal{H})$
of size $(1\pm \epsilon_1)x$ such that the induced subgraph $\mathcal{H}$ satisfies
\begin{align}
\left(1-\epsilon_1\right) \frac{x^{r-1}}{r^{r-2}} \le |\partial (\mathcal{H}[U])| \le \frac{x^{r-1}}{r^{r-2}}
\quad{\rm and}\quad
|\mathcal{H}[U]| \ge \left(1- \epsilon_1\right)\left(\frac{x}{r}\right)^r, \notag
\end{align}
where $\epsilon_1 = 40r^{2r}\epsilon^{1/2}$.
Let $m = |U| = (1\pm \epsilon_1)x$.
Then $x = (1\pm 2\epsilon_1)m$ and the inequalities above imply that
\begin{align}
\left(1-4r\epsilon_1\right) \frac{m^{r-1}}{r^{r-2}} \le |\partial (\mathcal{H}[U])| \le (1+4r\epsilon_1) \frac{m^{r-1}}{r^{r-2}}
\quad{\rm and}\quad
|\mathcal{H}[U]| \ge \left(1- 4r\epsilon_1\right)\left(\frac{m}{r}\right)^r, \notag
\end{align}
Lemma~\ref{LEMMA:cancellative-KK-stability-after-reduce} applied to the $r$-graph $\mathcal{H}[U]$
shows that $\mathcal{H}[U]$ is a subgraph of $T_{r}(m,r)$ after removing at most $\delta_1 m^r\le 2\delta_1 x^r$ edges,
where $\delta_{1} = \delta_{1}(4r\epsilon_1)$ is the constant guaranteed by Lemma~\ref{LEMMA:cancellative-KK-stability-after-reduce}.
If $m \le \lceil x\rceil$, then let $V' = U$ and we are done.
Otherwise we replace $U$ by any $\lceil x\rceil$-subset $V'$ of it,
and since $m \le (1+\epsilon_1)x$, we only loss at most $\epsilon_1 x^r$ edges.
Therefore, we can remove at most $\epsilon_1\left(x/r\right)^r+2\delta_1 x^r + \epsilon_1 x^r$ edges from
$\mathcal{H}$ to obtain an $r$-partite $r$-graph on at most $\lceil x\rceil$ vertices.
\end{proof}

\subsection{Proof of Lemma~\ref{LEMMA:cancellative-KK-stability-reduction}}\label{SUBSEC:proof-lemma-cancellative-KK-stability-reduction}
We prove Lemma~\ref{LEMMA:cancellative-KK-stability-reduction} in this section.
Recall that for a hypergraph $\mathcal{H}$ and a set $S\subset V(\mathcal{H})$
\begin{align}
\sigma_{\mathcal{H}}(S) = \sum_{v\in S}d_{\mathcal{H}}(v),
\quad{\rm and}\quad
\hat{\sigma}_{\mathcal{H}} = \max\left\{ \sigma_{\mathcal{H}}(E)\colon E \in \mathcal{H} \right\}. \notag
\end{align}
The subscript $\mathcal{H}$ will be omitted in the following proof.

\begin{proof}[Proof of Lemma~\ref{LEMMA:cancellative-KK-stability-reduction}]
We may assume that $r\ge 3$ since the case $r = 2$ follows from the Erd\H{o}s-Simonovits stability theorem \cite{SI68} (for $K_3$-free graphs).
Let $\epsilon>0$ be sufficiently small, $x\ge 0$ be sufficiently large, and
$\mathcal{H}$ be a cancellative $r$-graph satisfying assumptions in Lemma~\ref{LEMMA:cancellative-KK-stability-reduction}.
Fix an edge $E\in \mathcal{H}$ with $\sigma(E) = \hat{\sigma}$.

\begin{claim}\label{claim-low-bound-sigma-hat}
We have $\left( 1- 2r\epsilon \right)|\partial \mathcal{H}| < \hat{\sigma} \le |\partial \mathcal{H}|$.
\end{claim}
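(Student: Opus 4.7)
The plan is to split the claim into an upper bound $\hat\sigma \le |\partial\mathcal{H}|$ and a lower bound $\hat\sigma > (1-2r\epsilon)|\partial\mathcal{H}|$, and handle each separately.

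For the upper bound I would observe that the fixed edge $E \in \mathcal{H}$ is itself a $2$-covered set of $r$ vertices, since every pair $\{u,v\} \subset E$ is covered by the edge $E$. Applying Lemma~\ref{LEMMA:cancellative-clique-in-r-2-shadow} directly to $S = E$ then yields $\hat\sigma = \sigma(E) = \sum_{v\in E} d(v) \le |\partial\mathcal{H}|$.

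For the lower bound, since the proof already restricts to $r \ge 3$, my plan is to substitute the hypothesis \eqref{equ:cancellative-assumptions} into inequality \eqref{inequ-Thm4.4-01}. Writing $P = |\partial\mathcal{H}|$ and $\hat\sigma = cP$ with $c \in (0, 1]$ (using the upper bound just established), the key algebraic identity $(x/r)^r = (P/r)^{r/(r-1)}$, coming from $P = x^{r-1}/r^{r-2}$, reduces \eqref{inequ-Thm4.4-01} together with $|\mathcal{H}| \ge (1-\epsilon)(x/r)^r$ to the inequality
$$(1-\epsilon)^{r-1} \le \frac{c(r-c)}{r-1}.$$
Setting $t = 1 - c \ge 0$ and using $(1-\epsilon)^{r-1} \ge 1 - (r-1)\epsilon$ turns this into $t(r-2+t) \le (r-1)^2 \epsilon$. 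Since $r \ge 3$ forces $r - 2 + t \ge 1$, one reads off $t \le (r-1)^2 \epsilon / (r-2)$, which is at most $2r\epsilon$ for all $r \ge 3$. Hence $\hat\sigma = (1-t)P \ge (1 - 2r\epsilon)|\partial\mathcal{H}|$, as required.

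There is no genuine obstacle here; the claim is essentially a stability statement for \eqref{inequ-Thm4.4-01}, which attains equality exactly when $\hat\sigma = P$ and $|\mathcal{H}| = (P/r)^{r/(r-1)}$, so near-extremality of $|\mathcal{H}|$ must force $\hat\sigma$ to be within a multiplicative $O(r\epsilon)$ factor of $|\partial\mathcal{H}|$. The only care needed is in the algebraic reduction, pivoting on $(x/r)^r = (|\partial\mathcal{H}|/r)^{r/(r-1)}$.
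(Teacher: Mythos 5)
Your proposal is correct and follows essentially the same route as the paper: the upper bound comes from applying Lemma~\ref{LEMMA:cancellative-clique-in-r-2-shadow} to the $2$-covered set $E$, and the lower bound comes from sandwiching $|\mathcal{H}|$ between the hypothesis $(\ref{equ:cancellative-assumptions})$ and inequality $(\ref{inequ-Thm4.4-01})$ and then solving the resulting quadratic in $\hat\sigma$ (your normalization $c = \hat\sigma/|\partial\mathcal{H}|$, $t = 1-c$ is just a tidier way to carry out the same algebra, and like the paper it relies on the standing reduction to $r\ge 3$ so that $r-2+t\ge 1$).
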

\begin{proof}[Proof of Claim \ref{claim-low-bound-sigma-hat}]
The inequality $\hat{\sigma} \le |\partial \mathcal{H}|$ follows from Lemma~\ref{LEMMA:cancellative-clique-in-r-2-shadow},
so we may focus on the lower bound for $\hat{\sigma}$.
It follows from $(\ref{equ:cancellative-assumptions})$ and (\ref{inequ-Thm4.4-01}) that
\begin{align}
(1-\epsilon)\left(\frac{|\partial \mathcal{H}|}{r}\right)^{\frac{r}{r-1}}
\le |\mathcal{H}|
\le \frac{|\partial \mathcal{H}|^{\frac{r-2}{r-1}}}{r(r-1)^{1/(r-1)}}
     \left(\left(|\partial \mathcal{H}|-\frac{\hat{\sigma}}{r}\right)\hat{\sigma} \right)^{\frac{1}{r-1}}. \notag
\end{align}
So,
\begin{align}
\left(|\partial \mathcal{H}|-\frac{\hat{\sigma}}{r}\right)\hat{\sigma}
\ge (1-\epsilon)^{r-1} \frac{r-1}{r}|\partial \mathcal{H}|^2
& \ge (1-(r-1)\epsilon)\frac{r-1}{r} |\partial \mathcal{H}|^2 \notag\\
& =  \frac{r-1}{r}|\partial\mathcal{H}|^2 - \epsilon \frac{(r-1)^2}{r}|\partial \mathcal{H}|^2. \notag
\end{align}
Solving this quadratic inequality we obtain that $\hat{\sigma} \le \left(1- 2r\epsilon\right)|\partial \mathcal{H}|$
(the other solution is greater than $|\partial\mathcal{H}|$, which is not possible).
\end{proof}

\begin{claim}\label{claim-low-bound-d(v)}
We have $\left| d(v) - {\hat{\sigma}}/{r} \right| < 2r\epsilon^{1/2}\hat{\sigma}$ for every vertex $v \in E$.
\end{claim}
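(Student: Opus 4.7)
The plan is to use inequality (\ref{inequ-Thm4.4-02}) as the main workhorse and extract stability from the underlying Cauchy--Schwarz step on $\sum_{v \in E} d(v)^{2}$. The intuition is that $|\mathcal{H}|$ being close to the Kruskal--Katona type bound forces the degrees $d(v)$ for $v \in E$ to be tightly concentrated around their average $\hat{\sigma}/r$.

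Concretely, I would raise (\ref{inequ-Thm4.4-02}) to the $(r-1)$-th power and substitute the hypothesis $|\mathcal{H}| \ge (1-\epsilon)(x/r)^{r} = (1-\epsilon)(|\partial\mathcal{H}|/r)^{r/(r-1)}$ from (\ref{equ:cancellative-assumptions}), then bound $(|\partial\mathcal{H}|-\hat{\sigma})\hat{\sigma} \le 2r\epsilon|\partial\mathcal{H}|^{2}$ using Claim~\ref{claim-low-bound-sigma-hat}. After Bernoulli and rearrangement this yields
\begin{align*}
\sum_{v \in E} d(v)\bigl(\hat{\sigma}-d(v)\bigr) \ge \frac{r-1}{r}|\partial\mathcal{H}|^{2} - C r^{2}\epsilon\,|\partial\mathcal{H}|^{2}
\end{align*}
for some absolute constant $C$.

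Next I would write $d(v) = \hat{\sigma}/r + \delta_{v}$. Since $\sum_{v \in E} d(v) = \hat{\sigma}$, we have $\sum_{v \in E}\delta_{v} = 0$, hence
\begin{align*}
\sum_{v\in E} d(v)(\hat{\sigma}-d(v)) = \hat{\sigma}^{2} - \sum_{v \in E} d(v)^{2} = \frac{r-1}{r}\hat{\sigma}^{2} - \sum_{v \in E} \delta_{v}^{2}.
\end{align*}
Combining with the previous lower bound and discarding the term $(r-1)(\hat{\sigma}^{2}-|\partial\mathcal{H}|^{2})/r$, which is nonpositive by Lemma~\ref{LEMMA:cancellative-clique-in-r-2-shadow}, gives $\sum_{v \in E}\delta_{v}^{2} \le Cr^{2}\epsilon\,|\partial\mathcal{H}|^{2}$. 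Consequently $|\delta_{v}| \le r\sqrt{C\epsilon}\,|\partial\mathcal{H}|$ for every $v \in E$, and a final use of Claim~\ref{claim-low-bound-sigma-hat} to replace $|\partial\mathcal{H}|$ by $\hat{\sigma}/(1-2r\epsilon)$ produces the required $|d(v)-\hat{\sigma}/r| < 2r\epsilon^{1/2}\hat{\sigma}$ once $\epsilon$ is chosen small enough that $\sqrt{C}/(1-2r\epsilon) \le 2$.

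I do not anticipate a serious obstacle; the whole argument is a quantitative Cauchy--Schwarz. The only bookkeeping care is to apply $\hat{\sigma} \le |\partial\mathcal{H}|$ \emph{after} the identity that isolates $\sum_{v\in E}\delta_{v}^{2}$, so that the difference $(r-1)(\hat{\sigma}^{2}-|\partial\mathcal{H}|^{2})/r$ is discarded as nonpositive rather than introducing a two-sided estimate that would accumulate additional error and spoil the $2r\epsilon^{1/2}$ constant in the claim.
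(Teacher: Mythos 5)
Your proof is correct and takes essentially the same route as the paper: both start from inequality (\ref{inequ-Thm4.4-02}), use Claim~\ref{claim-low-bound-sigma-hat} to absorb the $(|\partial\mathcal{H}|-\hat\sigma)\hat\sigma$ term, obtain a lower bound for $\sum_{v\in E}d(v)(\hat\sigma-d(v))$, and then extract degree concentration. The only cosmetic differences are that you derive the lower bound directly rather than by contradiction, keep $|\partial\mathcal{H}|^2$ in the leading term so the comparison with $\frac{r-1}{r}\hat\sigma^2$ is a one-sided discard, and use the exact variance identity $\sum_{v\in E}d(v)(\hat\sigma-d(v)) = \frac{r-1}{r}\hat\sigma^2 - \sum_{v\in E}\delta_v^2$ in place of the paper's Jensen step (which is precisely the worst case of that identity), slightly streamlining the bookkeeping.
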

\begin{proof}[Proof of Claim \ref{claim-low-bound-d(v)}]
First, we prove that
\begin{align}\label{inequ-claim-d(v)[sigma-d(v)]-low-bound}
\sum_{v\in E}d(v)\left(\hat{\sigma}-d(v)\right) > \left( \frac{r-1}{r} - 2r\epsilon \right) \hat{\sigma}^{2}.
\end{align}
Suppose that (\ref{inequ-claim-d(v)[sigma-d(v)]-low-bound}) is not true.
Then by $(\ref{inequ-Thm4.4-02})$,
\begin{align}
|\mathcal{H}|
& \le \frac{|\partial \mathcal{H}|^{\frac{r-2}{r-1}}}{r(r-1)^{1/(r-1)}}
        \left( \sum_{v\in E}d(v)\left(\hat{\sigma}-d(v)\right)
         + \left(|\partial \mathcal{H}| - \hat{\sigma} \right)\hat{\sigma} \right)^{1/(r-1)} \notag \\
& \le \frac{|\partial \mathcal{H}|^{\frac{r-2}{r-1}}}{r(r-1)^{1/(r-1)}}
     \left( \left( \frac{r-1}{r} -2r\epsilon \right) \hat{\sigma}^{2}
      + \left(|\partial \mathcal{H}| - \hat{\sigma} \right)\hat{\sigma} \right)^{1/(r-1)} \notag\\
& \le \frac{|\partial \mathcal{H}|^{\frac{r-2}{r-1}}}{r(r-1)^{1/(r-1)}}
     \left(\left(|\partial \mathcal{H}| - \left(\frac{1}{r}+2r\epsilon\right)\hat{\sigma} \right)\hat{\sigma} \right)^{1/(r-1)} \notag
\end{align}
It follows from $\hat{\sigma} \le |\partial\mathcal{H}|$ (Claim~\ref{claim-low-bound-sigma-hat}) that
\begin{align}
|\mathcal{H}|
& \le \frac{|\partial \mathcal{H}|^{\frac{r-2}{r-1}}}{r(r-1)^{1/(r-1)}}
     \left(\left(\frac{r-1}{r} - 2r\epsilon \right)|\partial \mathcal{H}|^2 \right)^{\frac{1}{r-1}} \notag \\
& = \left(1-\frac{2r^2}{r-1}\epsilon\right)^{\frac{1}{r-1}} \left( \frac{|\partial\mathcal{H}|}{r} \right)^{\frac{r}{r-1}}
< (1-\epsilon)\left( \frac{|\partial\mathcal{H}|}{r} \right)^{\frac{r}{r-1}}, \notag
\end{align}
a contradiction. Therefore, (\ref{inequ-claim-d(v)[sigma-d(v)]-low-bound}) is true.

Now suppose that Claim~\ref{claim-low-bound-d(v)} is not true.
Assume that $E = \{v_1,\ldots, v_r\}$ and without loss of generality we may assume that
$\left| d(v_1) - {\hat{\sigma}}/{r} \right| \ge 2r\epsilon^{1/2}\hat{\sigma}$.
Then by Jensen's inequality
\begin{align}
\sum_{i \in [r]}d(v_i)\left(\hat{\sigma}-d(v_i)\right)
& = d(v_1)\left(\hat{\sigma}-d(v_1)\right) +  \sum_{i = 2}^{r} d(v_i)\left(\hat{\sigma}-d(v_i)\right) \notag \\
& \le d(v_1)\left(\hat{\sigma}-d(v_1)\right)
  + \left( \sum_{i = 2}^{r} d(v_i) \right) \left( \hat{\sigma} - \frac{\sum_{i = 2}^{r} d(v_i)}{r-1} \right) \notag \\
& = d(v_1)\left(\hat{\sigma}-d(v_1)\right)
  + \left( \hat{\sigma} - d(v_1) \right) \left( \hat{\sigma} - \frac{\hat{\sigma} - d(v_1)}{r-1} \right) \notag \\
& = \frac{r-2}{r-1} \left(\hat{\sigma}-d(v_1)\right)  \left( \hat{\sigma} + \frac{r}{r-2} d(v_1) \right) \notag \\
& = \frac{r-1}{r}\hat{\sigma}^2 - \frac{r}{r-1}\left(d(v_1)-\frac{\hat{\sigma}}{r}\right)^2
< \frac{r-1}{r}\hat{\sigma}^2 - 2r\epsilon \hat{\sigma}^2, \notag
\end{align}
which contradicts (\ref{inequ-claim-d(v)[sigma-d(v)]-low-bound}).
\end{proof}

For every $v \in E$ let
$$\mathcal{L}_v = \left\{ S \in L(v)\colon \sigma(S) \ge \left(1-\epsilon^{1/2}\right)\left(\hat{\sigma}-d(v)\right) \right\}.$$

\begin{claim}\label{claim-mathcal(L)(v)-lower-bound}
We have $|\mathcal{L}_v| \ge (1-4r^2 \epsilon^{1/2})d(v)$ for every $v \in E$.
\end{claim}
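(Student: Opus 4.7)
The plan is to apply inequality~(\ref{inequ-Thm4.4-03}) together with the cancellativity bound of Lemma~\ref{LEMMA:cancellative-clique-in-r-2-shadow}, squeezing the quantity $\sum_{v'\in E}\sum_{S\in L(v')}\sigma(S)$ between nearly matching upper and lower bounds, and then reading off a bound on $|L(v)\setminus\mathcal{L}_v|$ from the slack. The key pointwise ingredient is that for every $v'\in E$ and every $S\in L(v')$, the set $S\cup\{v'\}$ is an edge and therefore $2$-covered, so Lemma~\ref{LEMMA:cancellative-clique-in-r-2-shadow} yields $\sigma(S)=\sigma(S\cup\{v'\})-d(v')\le |\partial\mathcal{H}|-d(v')$. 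Applying the same lemma to an edge containing an arbitrary $S\in\partial\mathcal{H}$ gives the universal bound $\sigma(S)\le|\partial\mathcal{H}|$ and, crucially, shows that the links $\{L(v'):v'\in E\}$ are pairwise disjoint with total size $\hat{\sigma}$.

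For the lower bound I would plug $|\mathcal{H}|\ge(1-\epsilon)(|\partial\mathcal{H}|/r)^{r/(r-1)}$ into inequality~(\ref{inequ-Thm4.4-03}) and use Claim~\ref{claim-low-bound-sigma-hat} to bound the contribution of $\partial\mathcal{H}\setminus\bigcup_{v'\in E}L(v')$ by $(|\partial\mathcal{H}|-\hat{\sigma})|\partial\mathcal{H}|=O(r\epsilon)|\partial\mathcal{H}|^2$, obtaining
$$\sum_{v'\in E}\sum_{S\in L(v')}\sigma(S)\ \ge\ \frac{r-1}{r}|\partial\mathcal{H}|^2 - O(r\epsilon)|\partial\mathcal{H}|^2.$$
Summing the pointwise estimate $\sigma(S)\le|\partial\mathcal{H}|-d(v')$ over $S\in L(v')$ and then over $v'\in E$, and using the power-mean inequality $\sum_{v'\in E}d(v')^2\ge\hat{\sigma}^2/r$, one obtains the matching upper bound $\hat{\sigma}|\partial\mathcal{H}|-\hat{\sigma}^2/r\le\frac{r-1}{r}|\partial\mathcal{H}|^2$.

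The last step is a deficit count at the fixed $v\in E$. Letting $t=d(v)-|\mathcal{L}_v|$, each $S\in L(v)\setminus\mathcal{L}_v$ has $\sigma(S)<(1-\epsilon^{1/2})(\hat{\sigma}-d(v))$, which falls short of the pointwise bound $|\partial\mathcal{H}|-d(v)$ by at least $(|\partial\mathcal{H}|-\hat{\sigma})+\epsilon^{1/2}(\hat{\sigma}-d(v))\ge\epsilon^{1/2}(\hat{\sigma}-d(v))$. The non-negative per-set deficits sum to at most the gap between the two matching bounds, so
$$t\cdot\epsilon^{1/2}(\hat{\sigma}-d(v))\ \le\ O(r\epsilon)|\partial\mathcal{H}|^2,$$
and Claims~\ref{claim-low-bound-sigma-hat} and~\ref{claim-low-bound-d(v)} supply $\hat{\sigma}-d(v)\ge(r-1)\hat{\sigma}/(2r)$, $d(v)\ge\hat{\sigma}/(2r)$, and $|\partial\mathcal{H}|\le 2\hat{\sigma}$ for $\epsilon$ small, whence rearranging produces $t\le 4r^2\epsilon^{1/2}d(v)$. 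The main subtlety is the exponent calibration: the hypothesis provides only $O(\epsilon)$-slack, while the claim asks for $O(\epsilon^{1/2})$-slack in the count of bad sets; the threshold $(1-\epsilon^{1/2})(\hat{\sigma}-d(v))$ in the definition of $\mathcal{L}_v$ is chosen precisely so that each bad set contributes an $\epsilon^{1/2}$-sized deficit, turning the $O(\epsilon)$-slack into the target $O(\epsilon^{1/2})$-bound on $t/d(v)$.
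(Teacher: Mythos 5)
Your strategy---bound a global deficit $D=\sum_{v'\in E}\sum_{S\in L(v')}\bigl[(|\partial\mathcal{H}|-d(v'))-\sigma(S)\bigr]$ by combining the lower bound from~(\ref{inequ-Thm4.4-03}) with the pointwise bound $\sigma(S)\le|\partial\mathcal{H}|-d(v')$ of Lemma~\ref{LEMMA:cancellative-clique-in-r-2-shadow}, then read off the count of bad sets at the fixed $v$ from the per-set deficit---is sound in outline and does produce a bound of the right \emph{order}, namely $t=O(r^{2}\epsilon^{1/2})d(v)$. It is, however, genuinely different from the paper's route, and the claimed constant $4r^{2}$ does not follow from your estimates. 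The paper first establishes the intermediate per-vertex inequality
\begin{equation*}
\sum_{S\in L(v)}\sigma(S)\ \ge\ \bigl(1-4r^{2}\epsilon\bigr)\,d(v)\bigl(\hat{\sigma}-d(v)\bigr)
\quad\text{for \emph{each} }v\in E,
\end{equation*}
proved by contradiction via~(\ref{inequ-Thm4.4-03}), and then invokes Lemma~\ref{LEMMA:function-concentrate} on $V=L(v)$, $f=\sigma$, with the \emph{tight} ceiling $\sigma(S)\le\hat{\sigma}-d(v)$ (valid because $S\cup\{v\}\in\mathcal H$). The $d(v)(\hat{\sigma}-d(v))$ factor in the per-vertex deficit cancels against the $\epsilon^{1/2}(\hat{\sigma}-d(v))$ in the denominator, and $4r^{2}\epsilon/\epsilon^{1/2}=4r^{2}\epsilon^{1/2}$ drops out with no loss at all.

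Your argument loses in two places. First, you charge the entire global deficit $D$, which scales like $c(r)\epsilon|\partial\mathcal{H}|^{2}$ for some $c(r)\ge 4+(r-1)^{2}/r$, to the single vertex $v$, rather than the fair share $\approx D/r$ that the paper effectively obtains. Second, you compound several slack factor-of-two bounds ($|\partial\mathcal{H}|\le2\hat{\sigma}$, $d(v)\ge\hat{\sigma}/(2r)$, $\hat{\sigma}-d(v)\ge(r-1)\hat{\sigma}/(2r)$) which are much cruder than the $(1\pm O(\epsilon^{1/2}))$-accuracy that Claims~\ref{claim-low-bound-sigma-hat} and~\ref{claim-low-bound-d(v)} actually provide. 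Tracking the numbers, one finds $t\le\frac{c(r)\,r^{2}}{r-1}\epsilon^{1/2}d(v)$ even with tight leading-order estimates, which is $18\epsilon^{1/2}d(v)$ at $r=2$ and already exceeds $4r^{2}\epsilon^{1/2}d(v)=16\epsilon^{1/2}d(v)$; with your stated factor-of-two bounds the constant deteriorates by an additional two orders of magnitude. This is not a fatal defect for the paper's architecture---all the numerical constants cascade, and $\epsilon_{1}=35r^{4}\epsilon^{1/2}$ could simply be enlarged---but the proposal as written does not prove the claim with the stated constant, and the assertion that ``rearranging produces $t\le4r^{2}\epsilon^{1/2}d(v)$'' is not justified. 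If you want the sharp constant, prove the per-vertex estimate~(\ref{inequ-sum-sigma(S)-L_v}) first (using Claim~\ref{claim-low-bound-d(v)} to show $\sum_{v'\in E}d(v')(\hat{\sigma}-d(v'))<2r\,d(u)(\hat{\sigma}-d(u))$, so a $4r^{2}\epsilon$ deficit at one vertex produces a $2r\epsilon$ deficit in the sum), and then apply Lemma~\ref{LEMMA:function-concentrate} as the paper does.
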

\begin{proof}[Proof of Claim \ref{claim-mathcal(L)(v)-lower-bound}]
First we show that for every $v \in E$
\begin{align}\label{inequ-sum-sigma(S)-L_v}
\sum_{S \in L(v)} \sigma(S) \ge (1- 4r^2 \epsilon) d(v) \left( \hat{\sigma} - d(v) \right).
\end{align}
Suppose that $(\ref{inequ-sum-sigma(S)-L_v})$ is not true and let $u \in E$ be a counterexample.
Then
\begin{align}
\sum_{v \in E}\sum_{S \in L(v)}\sigma(S)
& = \sum_{S \in L(u)} \sigma(S) + \sum_{v \in E\setminus \{u\}}\sum_{S \in L(v)}\sigma(S) \notag \\
& \le (1-4r^2 \epsilon) d(u) \left( \hat{\sigma} - d(u) \right) + \sum_{v \in E\setminus \{u\}}d(v)\left( \hat{\sigma} - d(v) \right) \notag \\
& \le (1-2r\epsilon) \sum_{v \in E}d(v) \left( \hat{\sigma} - d(v) \right) \notag\\
& \quad + 2r\epsilon \sum_{v \in E}d(v) \left( \hat{\sigma} - d(v) \right) - 4r^2\epsilon d(u) \left( \hat{\sigma} - d(u) \right). \notag
\end{align}
Due to Claim~\ref{claim-low-bound-d(v)}, it is easy to see that
$\sum_{v \in E}d(v) \left( \hat{\sigma} - d(v) \right) < 2r d(u) \left( \hat{\sigma} - d(u)\right)$.
Therefore, by Jensen's inequality,
\begin{align}
\sum_{v \in E}\sum_{S \in L(v)}\sigma(S)
& \le (1-2r\epsilon) \sum_{v \in E}d(v) \left( \hat{\sigma} - d(v) \right) \notag\\
& \le (1-2r\epsilon)\left(\sum_{v \in E}d(v)\right)\left(\hat{\sigma}-\frac{\sum_{v \in E}d(v)}{r}\right)
 = (1-2r\epsilon) \frac{r-1}{r}\hat{\sigma}^{2}. \notag
\end{align}
Then it follows from $(\ref{inequ-Thm4.4-03})$ that
\begin{align}
|\mathcal{H}|
&\le \frac{1}{r(r-1)^{\frac{1}{r-1}}} |\partial \mathcal{H}|^{\frac{r-2}{r-1}}
      \left(\sum_{v \in E}\sum_{S \in L(v)}\sigma(S) + \sum_{S \in \partial\mathcal{H}\setminus \bigcup_{v\in E}L(v)}\sigma(S) \right)^{\frac{1}{r-1}}\notag\\
& \le \frac{1}{r(r-1)^{\frac{1}{r-1}}} |\partial \mathcal{H}|^{\frac{r-2}{r-1}}
    \left((1-2r\epsilon) \frac{r-1}{r} \hat{\sigma}^{2} + \left(|\partial\mathcal{H}|-\hat{\sigma}\right)\hat{\sigma} \right)^{\frac{1}{r-1}}\notag\\
& \le \frac{1}{r(r-1)^{\frac{1}{r-1}}} |\partial \mathcal{H}|^{\frac{r-2}{r-1}}
    \left(\left(|\partial\mathcal{H}|-\left(\frac{1}{r}+2(r-1)\epsilon \right)\hat{\sigma}\right)\hat{\sigma} \right)^{\frac{1}{r-1}}. \notag
\end{align}
Then, it follows from $\hat{\sigma} \le |\partial\mathcal{H}|$ (Claim~\ref{claim-low-bound-sigma-hat}) that
\begin{align}
|\mathcal{H}|
\le \left(1-2r\epsilon\right)^{\frac{1}{r-1}} \left( \frac{|\partial\mathcal{H}|}{r} \right)^{\frac{r}{r-1}}
< (1-\epsilon)\left( \frac{|\partial\mathcal{H}|}{r} \right)^{\frac{r}{r-1}}, \notag
\end{align}
a contradiction.
Therefore, $(\ref{inequ-sum-sigma(S)-L_v})$ holds for every $v \in E$.
Then, apply Lemma~\ref{LEMMA:function-concentrate} with $V = L(v)$ and $f(A) = \sigma(A)$ for every $A\in L(v)$ we obtain
\begin{align}
|L(v) \setminus \mathcal{L}_v|
& \le \frac{\left(\hat{\sigma} - d(v)\right) - \sum_{S\in L(v)}\sigma(S)/d(v)}
            {\left(\hat{\sigma} - d(v)\right) - \left( 1- \epsilon^{1/2} \right)\left( \hat{\sigma} - d(v) \right)} \cdot d(v) \notag\\
& \le \frac{\left(\hat{\sigma} - d(v)\right) - (1- 4r^2 \epsilon)\left( \hat{\sigma} - d(v) \right)}
             {\epsilon^{1/2}\left( \hat{\sigma} - d(v) \right)} \cdot d(v)
 \le 4r^2 \epsilon^{1/2} d(v). \notag
\end{align}
The completes the proof of Claim~\ref{claim-mathcal(L)(v)-lower-bound}.
\end{proof}

Let $$\mathcal{G} = \left\{S \in \partial\mathcal{H}\colon
                        \sigma(S)\ge \left(\frac{r-1}{r}-2r\epsilon^{1/2}\right)|\partial \mathcal{H}| \right\}.$$

\begin{claim}\label{claim-mathcal(G)-lower-bound}
We have $|\mathcal{G}| \ge (1-8r^2 \epsilon^{1/2}) |\partial\mathcal{H}|$.
\end{claim}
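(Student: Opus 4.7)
The plan is to show that $\mathcal{G}$ almost coincides with the union $\bigcup_{v\in E}\mathcal{L}_v$, where $E\in\mathcal{H}$ is the fixed edge with $\sigma(E)=\hat{\sigma}$. Concretely, I want to verify two things: (i) every $S\in\mathcal{L}_v$ already satisfies the defining inequality of $\mathcal{G}$, so $\bigcup_{v\in E}\mathcal{L}_v\subseteq\mathcal{G}$, and (ii) this union is disjoint and its size is very close to $|\partial\mathcal{H}|$. Together these give $|\mathcal{G}|\ge\sum_{v\in E}|\mathcal{L}_v|\ge(1-8r^2\epsilon^{1/2})|\partial\mathcal{H}|$.

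For the containment in (i), fix $v\in E$ and $S\in\mathcal{L}_v$. By definition $\sigma(S)\ge(1-\epsilon^{1/2})(\hat{\sigma}-d(v))$. Claim~\ref{claim-low-bound-d(v)} bounds $d(v)\le\hat{\sigma}/r+2r\epsilon^{1/2}\hat{\sigma}$, so $\hat{\sigma}-d(v)\ge\hat{\sigma}\bigl(\tfrac{r-1}{r}-2r\epsilon^{1/2}\bigr)$; Claim~\ref{claim-low-bound-sigma-hat} then lets me replace $\hat{\sigma}$ by $(1-2r\epsilon)|\partial\mathcal{H}|$. Chaining these bounds, the multiplicative loss $(1-\epsilon^{1/2})(1-2r\epsilon)$ introduces an additive error of order $\epsilon^{1/2}|\partial\mathcal{H}|$, which combined with the $-2r\epsilon^{1/2}$ term already present should yield $\sigma(S)\ge\bigl(\tfrac{r-1}{r}-2r\epsilon^{1/2}\bigr)|\partial\mathcal{H}|$ once $\epsilon$ is sufficiently small relative to $r$; hence $S\in\mathcal{G}$.

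For the size bound in (ii), since $E$ is an edge of $\mathcal{H}$ the vertex set $E$ is automatically $2$-covered, so Lemma~\ref{LEMMA:cancellative-clique-in-r-2-shadow} ensures that the links $L(v)$ for $v\in E$ are pairwise disjoint, and so are the subsets $\mathcal{L}_v\subseteq L(v)$. Summing Claim~\ref{claim-mathcal(L)(v)-lower-bound} over $v\in E$ and invoking Claim~\ref{claim-low-bound-sigma-hat} gives
\[
\left|\bigcup_{v\in E}\mathcal{L}_v\right|=\sum_{v\in E}|\mathcal{L}_v|\ge(1-4r^2\epsilon^{1/2})\sum_{v\in E}d(v)=(1-4r^2\epsilon^{1/2})\hat{\sigma}\ge(1-4r^2\epsilon^{1/2})(1-2r\epsilon)|\partial\mathcal{H}|,
\]
which is $\ge(1-8r^2\epsilon^{1/2})|\partial\mathcal{H}|$ once $\epsilon$ is small. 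The main obstacle is the bookkeeping in step (i): the natural expansion of $(1-\epsilon^{1/2})(1-2r\epsilon)\bigl(\tfrac{r-1}{r}-2r\epsilon^{1/2}\bigr)$ produces an extra $\epsilon^{1/2}$ contribution, so I need to exploit the slack hidden in the looser constants of Claims~\ref{claim-low-bound-sigma-hat} and~\ref{claim-low-bound-d(v)} (in particular the sharper $\sqrt{2(r-1)\epsilon}\,\hat{\sigma}$ that their proofs actually produce in place of $2r\epsilon^{1/2}\hat{\sigma}$) before the constants close and the stated threshold is achieved.
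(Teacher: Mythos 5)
Your proof coincides with the paper's argument: establish the containment $\bigcup_{v\in E}\mathcal{L}_v\subseteq\mathcal{G}$ via Claims~\ref{claim-low-bound-sigma-hat} and~\ref{claim-low-bound-d(v)}, use pairwise disjointness of the links from Lemma~\ref{LEMMA:cancellative-clique-in-r-2-shadow} (since $E$ is an edge and hence $2$-covered), and lower-bound $\sum_{v\in E}|\mathcal{L}_v|$ with Claim~\ref{claim-mathcal(L)(v)-lower-bound} and Claim~\ref{claim-low-bound-sigma-hat}. Your diagnosis of the bookkeeping is also correct: the paper's displayed step $(1-\epsilon^{1/2})\bigl(\tfrac{r-1}{r}-2r\epsilon^{1/2}\bigr)(1-2r\epsilon)\ge\tfrac{r-1}{r}-2r\epsilon^{1/2}$ is literally false because $(1-\epsilon^{1/2})(1-2r\epsilon)<1$, and it is repaired exactly as you propose, by using the sharper bound $|d(v)-\hat\sigma/r|<\sqrt{2(r-1)\epsilon}\,\hat\sigma$ that the proof of Claim~\ref{claim-low-bound-d(v)} actually establishes (it derives a contradiction from $\tfrac{r}{r-1}(d(v_1)-\hat\sigma/r)^2\ge 2r\epsilon\hat\sigma^2$); since $\sqrt{2(r-1)}+\tfrac{r-1}{r}<2r$ for every $r\ge2$, the chain then closes with room to spare for the lower-order $O(\epsilon)$ terms.
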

\begin{proof}[Proof of Claim \ref{claim-mathcal(G)-lower-bound}]
By Claims~\ref{claim-low-bound-d(v)} and~\ref{claim-low-bound-sigma-hat},
for every $v \in E$ and $S \in \mathcal{L}_{v}$ we have
\begin{align}
\sigma(S)
 \ge \left(1-\epsilon^{1/2}\right)\left(\hat{\sigma}-d(v)\right)
& \ge \left(1-\epsilon^{1/2}\right)\left(\frac{r-1}{r}-2r\epsilon^{1/2}\right)\hat{\sigma} \notag\\
& \ge \left(1-\epsilon^{1/2}\right)\left(\frac{r-1}{r}-2r\epsilon^{1/2}\right) \left(1-2r\epsilon\right) |\partial \mathcal{H}| \notag \\
& \ge \left(\frac{r-1}{r}-2r\epsilon^{1/2}\right)|\partial \mathcal{H}|. \notag
\end{align}
On the other hand, by Claims~\ref{claim-mathcal(L)(v)-lower-bound} and~\ref{claim-low-bound-sigma-hat},
\begin{align}
\sum_{v \in E}|\mathcal{L}_v|
\ge \sum_{v \in E}(1-4r^2 \epsilon^{1/2})d(v)
= (1-4r^2 \epsilon^{1/2}) \hat{\sigma}
\ge (1-8r^2 \epsilon^{1/2}) |\partial\mathcal{H}|. \notag
\end{align}
Therefore, by Lemma~\ref{LEMMA:cancellative-clique-in-r-2-shadow},
$|\mathcal{G}| \ge \sum_{v \in E}|\mathcal{L}_v| \ge (1-8r^2 \epsilon^{1/2}) |\partial\mathcal{H}|$.
\end{proof}

\begin{claim}\label{claim-Delta(H)-up-bound}
We have $\Delta(\mathcal{H}) \le \left( \frac{1}{r} + 3r\epsilon^{1/2}\right)|\partial\mathcal{H}|$.
\end{claim}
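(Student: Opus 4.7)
The plan is to derive Claim~\ref{claim-Delta(H)-up-bound} as a short consequence of the two bounds already established: $\hat{\sigma} \le |\partial\mathcal{H}|$ from Claim~\ref{claim-low-bound-sigma-hat} and $|\mathcal{G}| \ge (1 - 8r^2\epsilon^{1/2})|\partial\mathcal{H}|$ from Claim~\ref{claim-mathcal(G)-lower-bound}. The intuition is that any vertex $u$ of unusually large degree has a correspondingly large link $L(u) \subseteq \partial\mathcal{H}$; since almost every $(r-1)$-shadow lies in the ``good'' set $\mathcal{G}$, the link must meet $\mathcal{G}$, which produces an edge through $u$ whose other $r-1$ vertices already contribute nearly the full budget $|\partial\mathcal{H}|$ to $\sigma$. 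The upper bound $\hat{\sigma} \le |\partial\mathcal{H}|$ then forces $d(u)$ to be small.

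Given an arbitrary $u \in V(\mathcal{H})$, first dispose of the trivial case $d(u) \le 8r^2\epsilon^{1/2}|\partial\mathcal{H}|$: here the desired inequality $d(u) \le (1/r + 3r\epsilon^{1/2})|\partial\mathcal{H}|$ holds automatically provided $\epsilon$ is taken small enough (depending only on $r$) that $8r^2\epsilon^{1/2} \le 1/r + 3r\epsilon^{1/2}$. Otherwise, $|L(u)| = d(u) > 8r^2\epsilon^{1/2}|\partial\mathcal{H}| \ge |\partial\mathcal{H}\setminus \mathcal{G}|$ by Claim~\ref{claim-mathcal(G)-lower-bound}, so one can pick any $S \in L(u) \cap \mathcal{G}$.

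For such an $S$, the set $S \cup \{u\}$ is an edge of $\mathcal{H}$, and the defining inequality of $\mathcal{G}$ together with Claim~\ref{claim-low-bound-sigma-hat} gives
\begin{align}
d(u) + \left(\frac{r-1}{r} - 2r\epsilon^{1/2}\right)|\partial\mathcal{H}|
\le d(u) + \sigma(S) = \sigma(S\cup\{u\}) \le \hat{\sigma} \le |\partial\mathcal{H}|, \notag
\end{align}
which rearranges to $d(u) \le (1/r + 2r\epsilon^{1/2})|\partial\mathcal{H}|$, comfortably inside the claimed bound. No step looks genuinely difficult; the main point requiring attention is simply choosing the case-split threshold so that the gap $8r^2\epsilon^{1/2}|\partial\mathcal{H}|$ guaranteed by Claim~\ref{claim-mathcal(G)-lower-bound} matches what is needed to force $L(u) \cap \mathcal{G} \neq \emptyset$, but this is elementary arithmetic once $\epsilon$ is small in terms of $r$.
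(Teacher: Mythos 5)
Your proof is correct and is essentially the contrapositive of the paper's argument: the paper assumes $d(u)$ is too large, deduces $L(u)\cap\mathcal{G}=\emptyset$, and contradicts the lower bound on $|\mathcal{G}|$, while you deduce $L(u)\cap\mathcal{G}\neq\emptyset$ directly and bound $d(u)$; both rest on the same two ingredients ($\hat{\sigma}\le|\partial\mathcal{H}|$, the size of $\mathcal{G}$) and the same observation that $\sigma(S)+d(u)\le\hat{\sigma}$ for $S\in L(u)$.
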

\begin{proof}[Proof of Claim \ref{claim-Delta(H)-up-bound}]
Suppose to the contrary that there exists a vertex $u \in V(\mathcal{H})$ with
\begin{align}
d(u) > \left( \frac{1}{r} + 3r\epsilon^{1/2} \right)|\partial\mathcal{H}|. \notag
\end{align}
Then, for every $S \in L(u)$,
\begin{align}
\sigma(S)
\le \hat{\sigma} - d(u)
< |\partial \mathcal{H}| -\left( \frac{1}{r} + 3r\epsilon^{1/2} \right)|\partial\mathcal{H}|
=  \left( \frac{r-1}{r} - 3r\epsilon^{1/2} \right)|\partial\mathcal{H}|. \notag
\end{align}
Therefore, $L(u) \cap \mathcal{G} = \emptyset$, and hence
\begin{align}
|\mathcal{G}|
 \le |\partial \mathcal{H}| - |d(u)|  < \frac{r-1}{r}|\partial\mathcal{H}|<
 (1-8r^2 \epsilon^{1/2}) |\partial\mathcal{H}|, \notag
\end{align}
which contradicts Claim \ref{claim-mathcal(G)-lower-bound}.
\end{proof}

Let $U = \partial_{r-2} \mathcal{G} \subset V(\mathcal{H})$.

\begin{claim}\label{claim-U-up-bound}
We have $|U| \le \left(1+6r^{3}\epsilon^{1/2}\right) r^{\frac{r-2}{r-1}}|\partial\mathcal{H}|^{\frac{1}{r-1}}$.
\end{claim}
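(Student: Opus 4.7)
The plan is to establish a uniform lower bound $d(v)\ge\left(\frac{1}{r}-O(r^{2}\epsilon^{1/2})\right)|\partial\mathcal{H}|$ for every $v\in U$, and then obtain the desired bound on $|U|$ by contrasting this with the total degree sum $\sum_{v}d(v)=r|\mathcal{H}|$, where $|\mathcal{H}|\le(x/r)^{r}$ by Theorem~\ref{THM:LM-cancellative}.

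For the first step, I would fix $v\in U=\partial_{r-2}\mathcal{G}$ and pick any $S\in\mathcal{G}$ containing $v$. By the defining inequality of $\mathcal{G}$, $\sigma(S)\ge\left(\frac{r-1}{r}-2r\epsilon^{1/2}\right)|\partial\mathcal{H}|$, while by Claim~\ref{claim-Delta(H)-up-bound} the sum of degrees of the $r-2$ vertices of $S$ other than $v$ is at most $(r-2)\left(\frac{1}{r}+3r\epsilon^{1/2}\right)|\partial\mathcal{H}|$. Subtracting and noting that $2r+3r(r-2)=3r^{2}-4r\le 3r^{2}$, I obtain the uniform bound $d(v)\ge\left(\frac{1}{r}-3r^{2}\epsilon^{1/2}\right)|\partial\mathcal{H}|$, valid for every $v\in U$ independently of the witness $S$.

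For the second step, summing this uniform bound over $U$ and invoking Theorem~\ref{THM:LM-cancellative} gives
\[
\left(\frac{1}{r}-3r^{2}\epsilon^{1/2}\right)|\partial\mathcal{H}|\cdot|U|\ \le\ \sum_{v\in U}d(v)\ \le\ r|\mathcal{H}|\ \le\ \frac{x^{r}}{r^{r-1}}.
\]
Plugging in $|\partial\mathcal{H}|=x^{r-1}/r^{r-2}$ simplifies the left-hand side to $|U|\cdot(x^{r-1}/r^{r-1})(1-3r^{3}\epsilon^{1/2})$, so for $\epsilon$ small enough (say $\epsilon\le 1/(36r^{6})$) one obtains $|U|\le x/(1-3r^{3}\epsilon^{1/2})\le(1+6r^{3}\epsilon^{1/2})x$. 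The identity $x=r^{(r-2)/(r-1)}|\partial\mathcal{H}|^{1/(r-1)}$ then rewrites this as the claimed bound.

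There is no real obstacle beyond this single averaging step: the two analytic ingredients, namely the strong lower bound on $\sigma(S)$ for $S\in\mathcal{G}$ and the matching upper bound on $\Delta(\mathcal{H})$, are already delivered by Claims~\ref{claim-mathcal(G)-lower-bound} and~\ref{claim-Delta(H)-up-bound}, and the global size bound $|\mathcal{H}|\le(x/r)^{r}$ is Theorem~\ref{THM:LM-cancellative}. The only mild point to verify is that the lower bound on $d(v)$ does not depend on which witness $S\in\mathcal{G}$ one chooses, which is automatic from the uniformity of $\Delta(\mathcal{H})$.
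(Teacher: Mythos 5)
Your proof is correct and follows essentially the same route as the paper: establish $d(v)\ge\left(\frac{1}{r}-3r^{2}\epsilon^{1/2}\right)|\partial\mathcal{H}|$ for $v\in U$ using the lower bound on $\sigma(S)$ for a witness $S\in\mathcal{G}$ together with Claim~\ref{claim-Delta(H)-up-bound}, then combine $\sum_{v\in U}d(v)\le r|\mathcal{H}|$ with Theorem~\ref{THM:LM-cancellative}. The only cosmetic difference is that you prove the degree lower bound directly by subtracting $(r-2)\Delta(\mathcal{H})$, whereas the paper phrases the same inequality as a contradiction via the pigeonhole principle.
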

\begin{proof}[Proof of Claim \ref{claim-U-up-bound}]
First we show that for every $v \in U$,
\begin{align}\label{inequ-d(v)-up-bound}
d(v) \ge \left(\frac{1}{r}-3r^{2}\epsilon^{1/2} \right)|\partial\mathcal{H}|.
\end{align}
Suppose that $(\ref{inequ-d(v)-up-bound})$ is not true and let $u \in U$ be a counterexample.
Then choose a set $S \in \mathcal{G}$ such that $u \in S$.
By the definition of $\mathcal{G}$,
\begin{align}
\sigma(S) \ge \left(\frac{r-1}{r}-2r\epsilon^{1/2}\right)|\partial \mathcal{H}|,\notag
\end{align}
so by the Pigeonhole principle, there exists $u' \in S\setminus\{u\}$ such that
\begin{align}
d(u')
\ge \frac{\sigma(S)-d(u)}{r-2}
& > \frac{\left((r-1)/r-2r\epsilon^{1/2} \right)|\partial \mathcal{H}|
   - \left(1/r-3r^{2}\epsilon^{1/2} \right)|\partial\mathcal{H}|}{r-2} \notag \\
& > \left( \frac{1}{r} + 3r\epsilon^{1/2} \right)|\partial\mathcal{H}|, \notag
\end{align}
which contradicts Claim~\ref{claim-Delta(H)-up-bound}.
Therefore, $(\ref{inequ-d(v)-up-bound})$ holds for every $v \in U$,
and it follows from $\sum_{v\in U}d(v) \le r|\mathcal{H}|$ and Theorem~\ref{THM:LM-cancellative} that
\begin{align}
|U|
\le \frac{r|\mathcal{H}|}{\left(1/r-3r^{2}\epsilon^{1/2} \right)|\partial\mathcal{H}|}
\le \frac{r\left({|\partial \mathcal{H}|}/{r} \right)^{\frac{r}{r-1}}}{\left(1/r-3r^{2}\epsilon^{1/2} \right)|\partial\mathcal{H}|}
 <  \left(1+6r^{3}\epsilon^{1/2}\right) r^{\frac{r-2}{r-1}}|\partial\mathcal{H}|^{\frac{1}{r-1}}. \notag
\end{align}
\end{proof}

\begin{claim}\label{claim-hat(mathcal(G))-low-bound}
We have $|\mathcal{H}[U]| \ge \left(1-33r^{4}\epsilon^{1/2}\right)\left(\frac{|\partial\mathcal{H}|}{r}\right)^{\frac{r}{r-1}}$.
\end{claim}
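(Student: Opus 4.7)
The plan is to bound $|\mathcal{H}| - |\mathcal{H}[U]|$ directly. Every edge of $\mathcal{H}$ outside $\mathcal{H}[U]$ contains at least one vertex of $W := V(\mathcal{H}) \setminus U$, so $|\mathcal{H}| - |\mathcal{H}[U]| \le \sum_{v \in W} d(v)$. Combined with the hypothesis $|\mathcal{H}| \ge (1-\epsilon)(|\partial\mathcal{H}|/r)^{r/(r-1)}$, it suffices to prove that $\sum_{v \in W} d(v) \le 32r^{4}\epsilon^{1/2}(|\partial\mathcal{H}|/r)^{r/(r-1)}$.

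The first structural observation is that, since $U = \partial_{r-2}\mathcal{G}$, every $S \in \mathcal{G}$ is contained in $U$; hence any $(r-1)$-set $S \in \partial\mathcal{H}$ meeting $W$ must lie in $\partial\mathcal{H}\setminus\mathcal{G}$. Using the identity $|\partial L(v)| = |\{S \in \partial\mathcal{H} : v \in S\}|$ (a direct consequence of the definitions of link and shadow), a double-count yields
\[
\sum_{v \in W} |\partial L(v)| \;=\; \sum_{S \in \partial\mathcal{H}} |S \cap W| \;\le\; (r-1)\,|\partial\mathcal{H} \setminus \mathcal{G}| \;\le\; 8r^{3}\epsilon^{1/2}\,|\partial\mathcal{H}|,
\]
where the final inequality uses Claim~\ref{claim-mathcal(G)-lower-bound}.

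To convert this shadow-sum bound into a degree-sum bound, I would apply Kruskal-Katona (Theorem~\ref{THM:KK-Lovasz-version}) to the $(r-1)$-uniform link $L(v)$: writing $d(v) = \binom{y}{r-1}$ for real $y \ge r-1$ gives $|\partial L(v)| \ge \binom{y}{r-2} \ge c_r\, d(v)^{(r-2)/(r-1)}$, where $c_r := (r-1)/((r-1)!)^{1/(r-1)} \ge 1$. The remaining factor $d(v)^{1/(r-1)}$ is absorbed using the pointwise maximum-degree bound $d(v) \le (1/r + 3r\epsilon^{1/2})|\partial\mathcal{H}|$ from Claim~\ref{claim-Delta(H)-up-bound}. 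Splitting $d(v) = d(v)^{1/(r-1)} \cdot d(v)^{(r-2)/(r-1)}$ and summing,
\[
\sum_{v \in W} d(v) \;\le\; \left(\tfrac{1}{r}+3r\epsilon^{1/2}\right)^{\frac{1}{r-1}}|\partial\mathcal{H}|^{\frac{1}{r-1}} \cdot \frac{1}{c_r}\sum_{v \in W}|\partial L(v)| \;\le\; 32r^{4}\epsilon^{1/2}\left(\tfrac{|\partial\mathcal{H}|}{r}\right)^{\frac{r}{r-1}},
\]
after using the identity $|\partial\mathcal{H}|^{r/(r-1)}/r^{1/(r-1)} = r(|\partial\mathcal{H}|/r)^{r/(r-1)}$ and $c_r \ge 1$. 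Subtracting from the lower bound on $|\mathcal{H}|$ gives the claim, once $\epsilon$ is small enough that $\epsilon + 32r^{4}\epsilon^{1/2} \le 33r^{4}\epsilon^{1/2}$.

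The technical heart is the pairing of two inequalities that are individually insufficient: Kruskal-Katona alone only controls $d(v)^{(r-2)/(r-1)}$ via $|\partial L(v)|$ rather than $d(v)$ itself, while Claim~\ref{claim-Delta(H)-up-bound} is a pointwise maximum that is unaware of whether $v \in W$. The splitting trick $d(v) = d(v)^{1/(r-1)}\cdot d(v)^{(r-2)/(r-1)}$ marries them and extracts the $\epsilon^{1/2}$ saving that is concentrated on $W$; the rest is routine accounting of constants.
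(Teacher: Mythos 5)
Your proof is correct, and it takes a genuinely different route from the paper's. The paper partitions the edges of $\mathcal{H}$ into the classes $\mathcal{E}_i$ according to how many vertices they have in $U$, bounds $\sum_{i\in[r-1]}i|\mathcal{E}_i|$ by comparing $\Delta(\mathcal{H})$ against the lower bound $d_{\mathcal{H}[U]}(u)\ge (1/r-11r^2\epsilon^{1/2})|\partial\mathcal{H}|$ on $U$-degrees (which comes from inequality $(2.10)$ together with $|\partial\mathcal{H}\setminus\mathcal{G}|\le 8r^2\epsilon^{1/2}|\partial\mathcal{H}|$), multiplies by the upper bound on $|U|$ from Claim~\ref{claim-U-up-bound}, and then bounds $|\mathcal{E}_0|$ separately by applying Theorem~\ref{THM:LM-cancellative} to the small shadow $\partial\mathcal{E}_0\subset\partial\mathcal{H}\setminus\mathcal{G}$. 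You instead bound $|\mathcal{H}|-|\mathcal{H}[U]|\le\sum_{v\in W}d(v)$ and control that degree sum over $W$ directly: the identity $|\partial L(v)|=|\{S\in\partial\mathcal{H}:v\in S\}|$ (a correct bijection $B\mapsto B\cup\{v\}$) gives $\sum_{v\in W}|\partial L(v)|=\sum_{S\in\partial\mathcal{H}}|S\cap W|\le (r-1)|\partial\mathcal{H}\setminus\mathcal{G}|$ since every $S\in\mathcal{G}$ has $S\subset U$; then the Lov\'asz form of Kruskal--Katona applied to the $(r-1)$-uniform link $L(v)$ (legitimate here since the lemma assumes $r\ge 3$, so $r-1\ge 2$) yields $|\partial L(v)|\ge c_r\,d(v)^{(r-2)/(r-1)}$, and the remaining $d(v)^{1/(r-1)}$ factor is absorbed by Claim~\ref{claim-Delta(H)-up-bound}. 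This avoids the separate $\mathcal{E}_0$ analysis, does not invoke the upper bound on $|U|$ from Claim~\ref{claim-U-up-bound}, and does not need the degree lower bound on $U$ from $(2.10)$; what it uses instead is a direct application of Kruskal--Katona on links that the paper does not employ here. The constants also check out: $8r^3\epsilon^{1/2}\cdot r^{-1/(r-1)}|\partial\mathcal{H}|^{r/(r-1)}=8r^4\epsilon^{1/2}(|\partial\mathcal{H}|/r)^{r/(r-1)}$, and the small correction from $(1/r+3r\epsilon^{1/2})^{1/(r-1)}$ keeps you under $32r^4\epsilon^{1/2}$ for $\epsilon$ small, giving the claimed $1-33r^4\epsilon^{1/2}$. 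Both approaches are sound; yours is arguably a bit cleaner since it treats all non-$U$ edges uniformly.
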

\begin{proof}[Proof of Claim \ref{claim-hat(mathcal(G))-low-bound}]
By $(\ref{inequ-d(v)-up-bound})$ and Claim~\ref{claim-mathcal(G)-lower-bound}, for every $u \in U$ we have
\begin{align}
d_{\mathcal{H}[U]}(u)
  \ge d_{\mathcal{H}}(u) - |\partial\mathcal{H}\setminus\mathcal{G}|
& \ge \left(\frac{1}{r}-3r^{2}\epsilon^{1/2} \right)|\partial\mathcal{H}| - 8r^2\epsilon^{1/2}|\partial\mathcal{H}| \notag\\
& = \left(\frac{1}{r}-11r^{2}\epsilon^{1/2} \right)|\partial\mathcal{H}|. \notag
\end{align}
For every $0 \le i \le r$ let $\mathcal{E}_{i}$ be the set of edges in $\mathcal{H}$ that have exactly $i$ vertices in $U$
and note that $\mathcal{E}_r = \mathcal{H}[U]$.
Then by Claim~\ref{claim-Delta(H)-up-bound} we have
\begin{align}
\sum_{i\in[r-1]}i|\mathcal{E}_i|
= \sum_{u\in U}d_{\mathcal{H}}(u) - r|\mathcal{E}_r|
& = \sum_{u\in U}d_{\mathcal{H}}(u) - \sum_{u\in U}d_{\mathcal{H}[U]}(u) \notag\\
& \le \sum_{u\in U}\left(\Delta(\mathcal{H}) - d_{\mathcal{H}[U]}(u)\right) \notag\\
& \le \left(\left( \frac{1}{r} + 3r\epsilon^{1/2}\right)|\partial\mathcal{H}|
        - \left(\frac{1}{r}-11r^{2}\epsilon^{1/2} \right)|\partial\mathcal{H}|\right)|U| \notag\\
& \le 12r^{2}\epsilon^{1/2}|\partial\mathcal{H}||U|. \notag
\end{align}
It follows from Claim~\ref{claim-U-up-bound} that
\begin{align}
\sum_{i\in[r-1]}i|\mathcal{E}_i|
\le 12r^{2}\epsilon^{1/2}|\partial\mathcal{H}| \cdot
        \left(1+6r^2\epsilon^{1/2}\right) r^{\frac{r-2}{r-1}}|\partial\mathcal{H}|^{\frac{1}{r-1}}
\le 24r^{2}\epsilon^{1/2}|\partial\mathcal{H}|^{\frac{r}{r-1}}. \notag
\end{align}
On the other hand, by Theorem~\ref{THM:LM-cancellative},
$|\mathcal{E}_0| \le \left(|\partial\mathcal{H}|-|\mathcal{G}|\right)^{\frac{r}{r-1}}
\le 8r^2 \epsilon^{1/2} |\partial\mathcal{H}|^{\frac{r}{r-1}}$.
Therefore,
\begin{align}
|\mathcal{H}[U]|
= |\mathcal{H}| - \sum_{i=0}^{r-1}|\mathcal{E}_{i}|
& \ge (1-\epsilon)\left(\frac{|\partial\mathcal{H}|}{r}\right)^{\frac{r}{r-1}}
    - 24r^{2}\epsilon^{1/2}|\partial\mathcal{H}|^{\frac{r}{r-1}}
    - 8r^2 \epsilon^{1/2} |\partial\mathcal{H}|^{\frac{r}{r-1}} \notag\\
& \ge \left(1-33r^{4}\epsilon^{1/2}\right)\left(\frac{|\partial\mathcal{H}|}{r}\right)^{\frac{r}{r-1}}. \notag
\end{align}
\end{proof}

\begin{claim}\label{claim-U-low-bound}
We have $|U| \ge \left(1-35r^{4}\epsilon^{1/2}\right) r^{\frac{r-2}{r-1}}|\partial\mathcal{H}|^{\frac{1}{r-1}}$.
\end{claim}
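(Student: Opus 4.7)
The plan is to obtain the lower bound on $|U|$ via the standard edge-vertex degree sum identity applied to the induced subgraph $\mathcal{H}[U]$, using the uniform upper bound on the maximum degree from Claim~\ref{claim-Delta(H)-up-bound} and the lower bound on $|\mathcal{H}[U]|$ from Claim~\ref{claim-hat(mathcal(G))-low-bound}. Upper and lower bounds on $|U|$ already agree up to a $1\pm O(\epsilon^{1/2})$ factor, so no finer structural argument is needed.

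Concretely, I would start from the identity
\begin{align}
r\,|\mathcal{H}[U]| \;=\; \sum_{v\in U} d_{\mathcal{H}[U]}(v). \notag
\end{align}
Since $d_{\mathcal{H}[U]}(v) \le d_{\mathcal{H}}(v) \le \Delta(\mathcal{H})$ for every $v\in U$, Claim~\ref{claim-Delta(H)-up-bound} gives
\begin{align}
r\,|\mathcal{H}[U]| \;\le\; |U|\cdot \Delta(\mathcal{H}) \;\le\; |U|\left(\frac{1}{r}+3r\epsilon^{1/2}\right)|\partial\mathcal{H}|. \notag
\end{align}
Rearranging and then plugging in the lower bound on $|\mathcal{H}[U]|$ from Claim~\ref{claim-hat(mathcal(G))-low-bound} yields
\begin{align}
|U| \;\ge\; \frac{r\,|\mathcal{H}[U]|}{\left(\frac{1}{r}+3r\epsilon^{1/2}\right)|\partial\mathcal{H}|}
\;\ge\; \frac{r\left(1-33r^{4}\epsilon^{1/2}\right)}{\left(\frac{1}{r}+3r\epsilon^{1/2}\right)|\partial\mathcal{H}|}\left(\frac{|\partial\mathcal{H}|}{r}\right)^{\frac{r}{r-1}}. \notag
\end{align}

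The rest is arithmetic: factor out $|\partial\mathcal{H}|^{1/(r-1)}$, use $r/(1/r+3r\epsilon^{1/2})=r^{2}/(1+3r^{2}\epsilon^{1/2})\ge r^{2}(1-3r^{2}\epsilon^{1/2})$, and collect the powers of $r$ using $r^{2-r/(r-1)}=r^{(r-2)/(r-1)}$. This gives
\begin{align}
|U| \;\ge\; \left(1-33r^{4}\epsilon^{1/2}\right)\left(1-3r^{2}\epsilon^{1/2}\right) r^{\frac{r-2}{r-1}} |\partial\mathcal{H}|^{\frac{1}{r-1}}
\;\ge\; \left(1-35r^{4}\epsilon^{1/2}\right) r^{\frac{r-2}{r-1}} |\partial\mathcal{H}|^{\frac{1}{r-1}}, \notag
\end{align}
as required. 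There is no real obstacle here; the only thing to be careful about is that the error terms $33r^{4}\epsilon^{1/2}$ and $3r^{2}\epsilon^{1/2}$ combine to at most $35r^{4}\epsilon^{1/2}$ after multiplication (and $\epsilon$ is assumed small enough that the cross term is absorbed), which holds for sufficiently small $\epsilon$.
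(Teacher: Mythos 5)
Your proof is correct and is essentially the same as the paper's: both bound $|U|$ from below by $r|\mathcal{H}[U]|/\Delta(\mathcal{H})$, then plug in Claim~\ref{claim-Delta(H)-up-bound} and Claim~\ref{claim-hat(mathcal(G))-low-bound} and simplify the powers of $r$. (Incidentally, the paper's displayed inequality has a small typo, writing $(|\mathcal{H}|/r)^{r/(r-1)}$ where $(|\partial\mathcal{H}|/r)^{r/(r-1)}$ is meant; your version has the correct quantity.)
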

\begin{proof}[Proof of Claim \ref{claim-U-low-bound}]
It follows from Claims \ref{claim-Delta(H)-up-bound}, \ref{claim-hat(mathcal(G))-low-bound},
and $\sum_{u \in U}d_{\mathcal{H}}(u) \ge r|\mathcal{H}[U]|$ that
\begin{align}
|U|
\ge  \frac{r|\mathcal{H}[U]|}{\Delta(\mathcal{H})}
\ge \frac{r\left(1-33r^{4}\epsilon^{1/2}\right)\left(|\mathcal{H}|/r\right)^{\frac{r}{r-1}}}
     {\left(1/r+ 3r\epsilon^{1/2} \right)|\partial\mathcal{H}|}
\ge \left(1-35r^{4}\epsilon^{1/2}\right) r^{\frac{r-2}{r-1}}|\partial\mathcal{H}|^{\frac{1}{r-1}}. \notag
\end{align}
\end{proof}

Now Claims~\ref{claim-U-up-bound} and~\ref{claim-U-low-bound} and $|\partial\mathcal{H}| = x^{r-1}/r^{r-2}$
imply that $|U| = (1\pm \epsilon_1)x$.
Claim~\ref{claim-hat(mathcal(G))-low-bound} shows that
$$|\mathcal{H}[U]|
\ge \left(1-33r^{4}\epsilon^{1/2}\right)\left(\frac{|\partial\mathcal{H}|}{r}\right)^{\frac{r}{r-1}}
= \left(1-33r^{4}\epsilon^{1/2}\right)\left(\frac{x}{r}\right)^{r}
\ge (1-\epsilon_1)\left(\frac{x}{r}\right)^{r}.$$
Together with Theorem~\ref{THM:LM-cancellative} we obtain
\begin{align}
|\partial\left(\mathcal{H}[U]\right)|
\ge r|\mathcal{H}[U]|^{\frac{r-1}{r}}
\ge (1-\epsilon_1)\frac{x^{r-1}}{r^{r-2}}. \notag
\end{align}
\end{proof}

\subsection{Proof of Lemma~\ref{LEMMA:cancellative-KK-stability-after-reduce}}\label{SUBSEC:proof-lemma-cancellative-KK-stability-after-reduce}
\begin{proof}[Proof of Lemma~\ref{LEMMA:cancellative-KK-stability-after-reduce}]
The proof if by induction on $r$.
The case $r = 2$ follows from the Erd\H{o}s-Simonovits stability theorem \cite{SI68} (for $K_3$-free graphs).
So we may assume that $r \ge 3$.
Fix $r\ge 3$ and $\delta>0$.
Let $\epsilon>0$ be sufficiently small, $x>0$ be sufficiently large, and
$\mathcal{H}$ be a cancellative $r$-graph satisfying assumptions in Lemma~\ref{LEMMA:cancellative-KK-stability-after-reduce}.
Let
\begin{align}
V_{L} & = \left\{ v\in V(\mathcal{H})\colon
            d(v) \ge (1-\epsilon^{1/2})\left(\frac{|\partial L(v)|}{r-1} \right)^{\frac{r-1}{r-2}} \right\}, \notag\\
\widehat{V}_{L} & = \left\{v\in V(\mathcal{H})\colon
            d(v) \ge \left( \frac{1}{r} - 3r^{2}\epsilon^{1/2} \right) |\partial\mathcal{H}| \right\}, \notag
\end{align}
$V_{S} = V(\mathcal{H}) \setminus V_{L}$, and $\widehat{V}_{S} = V(\mathcal{H})\setminus \widehat{V}_{L}$.
It follows from the definition that for every $v \in V_S$,
\begin{align}\label{inequ-partial-Lv-low-bound}
|\partial L(v)| \ge \frac{(r-1)\left(d(v) \right)^{\frac{r-2}{r-1}}}{(1-\epsilon^{1/2})^{\frac{r-2}{r-1}}}.
\end{align}

\begin{claim}\label{claim-low-bound-hat-VL}
We have $|\widehat{V}_{L}| \ge \left(1 - 36r^{4}\epsilon^{1/2}\right)n$, and hence $|\widehat{V}_{S}|\le 36r^{4}\epsilon^{1/2}n$.
\end{claim}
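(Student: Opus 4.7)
The plan is to mirror the intermediate chain of claims in the proof of Lemma~\ref{LEMMA:cancellative-KK-stability-reduction}: the hypothesis~(\ref{equ:assumption-cacnel-after-reduction}) coincides with~(\ref{equ:cancellative-assumptions}) up to the harmless factor $(1\pm\epsilon)^{1/(r-1)}$ relating $n$ and $x$, so the whole sequence of estimates transfers with only cosmetic changes and furnishes an explicit subset of $\widehat{V}_L$ of size at least $(1-36r^4\epsilon^{1/2})n$.

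First, I would fix an edge $E \in \mathcal{H}$ with $\sigma(E)=\hat\sigma$ and rerun the analogues of Claims~\ref{claim-low-bound-sigma-hat}, \ref{claim-low-bound-d(v)}, \ref{claim-mathcal(L)(v)-lower-bound}, \ref{claim-mathcal(G)-lower-bound}, and~\ref{claim-Delta(H)-up-bound}, feeding assumption~(\ref{equ:assumption-cacnel-after-reduction}) into the inequalities of Lemma~\ref{LEMMA:inequalities-cancellative-in-LM}. The three outputs I need are: $\hat\sigma \ge (1-2r\epsilon)|\partial\mathcal{H}|$; the set $\mathcal{G} := \{S \in \partial\mathcal{H} : \sigma(S) \ge ((r-1)/r-2r\epsilon^{1/2})|\partial\mathcal{H}|\}$ satisfies $|\mathcal{G}| \ge (1-8r^2\epsilon^{1/2})|\partial\mathcal{H}|$; and the uniform upper bound $\Delta(\mathcal{H}) \le (1/r+3r\epsilon^{1/2})|\partial\mathcal{H}|$.

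Next, setting $U := \partial_{r-2}\mathcal{G}$, I would verify the inclusion $U \subseteq \widehat{V}_L$: if some $v \in U$ had $d(v) < (1/r - 3r^2\epsilon^{1/2})|\partial\mathcal{H}|$, then picking $S \in \mathcal{G}$ with $v \in S$, the estimate $\sigma(S)-d(v) \ge ((r-2)/r + (3r^2-2r)\epsilon^{1/2})|\partial\mathcal{H}|$ combined with pigeonhole on the remaining $r-2$ vertices of $S$ would yield $u \in S\setminus\{v\}$ with $d(u) > (1/r + 3r\epsilon^{1/2})|\partial\mathcal{H}|$, contradicting the bound on $\Delta(\mathcal{H})$ (since $(3r^2-2r)/(r-2) \ge 3r$). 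This is exactly the argument of~(\ref{inequ-d(v)-up-bound}).

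Finally, I would bound $|U|$ from below by replaying Claim~\ref{claim-U-low-bound}: first establish $|\mathcal{H}[U]| \ge (1-33r^4\epsilon^{1/2})(|\partial\mathcal{H}|/r)^{r/(r-1)}$ via the edge-partition argument of Claim~\ref{claim-hat(mathcal(G))-low-bound}, then combine with $r|\mathcal{H}[U]| \le |U|\cdot\Delta(\mathcal{H})$ to obtain $|U| \ge (1-35r^4\epsilon^{1/2})\,r^{(r-2)/(r-1)}|\partial\mathcal{H}|^{1/(r-1)}$. Substituting $|\partial\mathcal{H}| = (1\pm\epsilon)n^{r-1}/r^{r-2}$ gives $r^{(r-2)/(r-1)}|\partial\mathcal{H}|^{1/(r-1)} \ge (1-\epsilon)n$, so $|\widehat{V}_L| \ge |U| \ge (1-36r^4\epsilon^{1/2})n$ as required. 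There is no new conceptual hurdle; the main work is the bookkeeping of the cumulative $\epsilon^{1/2}$ losses through the chain so that everything is absorbed into the single constant $36r^4$, and checking that the $(1\pm\epsilon)^{1/(r-1)}$ discrepancy between $x$ and $n$ can likewise be absorbed.
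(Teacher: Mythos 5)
Your proposal is correct and follows essentially the same route as the paper: the paper's own proof of Claim~\ref{claim-low-bound-hat-VL} simply invokes Claim~\ref{claim-U-low-bound} and~(\ref{inequ-d(v)-up-bound}) from the proof of Lemma~\ref{LEMMA:cancellative-KK-stability-reduction} to obtain the set $U = \partial_{r-2}\mathcal{G} \subseteq \widehat{V}_L$ of size at least $(1-35r^4\epsilon^{1/2})r^{(r-2)/(r-1)}|\partial\mathcal{H}|^{1/(r-1)}$, and then substitutes $|\partial\mathcal{H}| \ge (1-\epsilon)n^{r-1}/r^{r-2}$. You unpack those citations into an explicit re-derivation of Claims~\ref{claim-low-bound-sigma-hat}--\ref{claim-U-low-bound} under the hypothesis~(\ref{equ:assumption-cacnel-after-reduction}), which is a transparent way of saying the same thing, and your observation that the only property of~(\ref{equ:cancellative-assumptions}) ever used is $|\mathcal{H}| \ge (1-O(\epsilon))(|\partial\mathcal{H}|/r)^{r/(r-1)}$ (which persists under~(\ref{equ:assumption-cacnel-after-reduction}) with a harmless change of constant) is exactly the implicit justification the paper relies on.
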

\begin{proof}[Proof of Claim \ref{claim-low-bound-hat-VL}]
Since $|\partial\mathcal{H}| = (1\pm \epsilon)n^{r-1}/r^{r-2}$ and $|\mathcal{H}| \ge (1-\epsilon)(n/r)^{r}$,
it follows from Claim~\ref{claim-U-low-bound} and $(\ref{inequ-d(v)-up-bound})$
that there exists a set $U\subset V(\mathcal{H})$ of size at least
\begin{align}
\left(1-35r^{4}\epsilon^{1/2}\right) r^{\frac{r-2}{r-1}}|\partial\mathcal{H}|^{\frac{1}{r-1}}
\ge \left(1-35r^{4}\epsilon^{1/2}\right)\left(1-\epsilon\right)n
\ge \left(1-36r^{4}\epsilon^{1/2}\right)n, \notag
\end{align}
such that $d(v)\ge (1/r-3r^2 \epsilon^{1/2})|\partial\mathcal{H}|$ for every $v \in U$.
Therefore, $|\widehat{V}_{L}| \ge |U| \ge \left(1-36r^{4}\epsilon^{1/2}\right)n$,
and hence $|\widehat{V}_{S}| = n-|\widehat{V}_{L}| \le 36r^{4}\epsilon^{1/2}n$.
\end{proof}

\begin{claim}\label{claim-low-bound-VL}
We have $|V_{L}| \ge (1-37r^{4}\epsilon^{1/2})n$.
\end{claim}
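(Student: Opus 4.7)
The plan is to pass from inequality $(\ref{inequ-Thm4.4-04})$, which is a global sum estimate, to a per-vertex slack estimate that forces vertices of $V_S$ to be rare, and then to combine this with Claim~\ref{claim-low-bound-hat-VL}. The key preliminary observation is that for every $v \in V(\mathcal{H})$ the link $L(v)$ is itself a cancellative $(r-1)$-graph: if $A,B,C \in L(v)$ with $A \cup B = A \cup C$, then $A\cup\{v\}, B\cup\{v\}, C\cup\{v\} \in \mathcal{H}$ satisfy $(A\cup\{v\}) \cup (B\cup\{v\}) = (A\cup\{v\}) \cup (C\cup\{v\})$, so cancellativity of $\mathcal{H}$ yields $B \cup \{v\} = C \cup \{v\}$ and hence $B = C$. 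Theorem~\ref{THM:LM-cancellative} applied to $L(v)$ therefore gives $d(v) \le (|\partial L(v)|/(r-1))^{(r-1)/(r-2)}$, which rearranges to
$$d(v)^{1/(r-1)}\,|\partial L(v)| \ge (r-1)\,d(v) \qquad\text{for every } v \in V(\mathcal{H}).$$

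Summing this over $v$, combining with $(\ref{inequ-Thm4.4-04})$, using $\sum_{v} d(v) = r|\mathcal{H}|$, and plugging in $(\ref{equ:assumption-cacnel-after-reduction})$ yields
$$\sum_{v\in V(\mathcal{H})} \bigl(d(v)^{1/(r-1)}|\partial L(v)| - (r-1) d(v)\bigr) \le r(r-1)\left[\left(\frac{|\partial\mathcal{H}|}{r}\right)^{\!r/(r-1)} - |\mathcal{H}|\right] \le C_1(r)\,\epsilon\, n^r,$$
for a constant $C_1(r)$ depending only on $r$, where the final step uses $(|\partial\mathcal{H}|/r)^{r/(r-1)} = (1\pm O(\epsilon))(n/r)^r$.

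Next, I would lower bound the per-vertex slack on $V_S$. For $v \in V_S$, the strict inequality $(\ref{inequ-partial-Lv-low-bound})$ combined with the Bernoulli estimate $(1-\epsilon^{1/2})^{-(r-2)/(r-1)} \ge 1 + \tfrac{r-2}{r-1}\epsilon^{1/2}$ yields
$$d(v)^{1/(r-1)}|\partial L(v)| - (r-1)d(v) \ge (r-2)\,\epsilon^{1/2}\, d(v).$$
Restricting further to $v \in V_S \cap \widehat{V}_L$, the definition of $\widehat{V}_L$ gives $d(v) \ge (1/r - 3r^2\epsilon^{1/2})|\partial\mathcal{H}| \ge c_r\, n^{r-1}$ for some constant $c_r > 0$ (provided $\epsilon$ is small enough to absorb the correction). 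Dividing the global slack upper bound by this per-vertex slack lower bound shows that $|V_S \cap \widehat{V}_L| \le C_2(r)\,\epsilon^{1/2}\, n$ for some $C_2(r)$ that grows only polynomially in $r$.

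Finally, write $V_S = (V_S \cap \widehat{V}_L) \sqcup (V_S \cap \widehat{V}_S)$ and invoke Claim~\ref{claim-low-bound-hat-VL} to bound the second piece by $36 r^4 \epsilon^{1/2} n$, giving $|V_S| \le (C_2(r) + 36 r^4)\epsilon^{1/2} n \le 37 r^4 \epsilon^{1/2} n$ once one checks that $C_2(r)$ is absorbed by $r^4$ for all $r \ge 3$ (a direct computation shows $C_2(r)$ is essentially constant). The main obstacle is the per-vertex slack estimate on $V_S$: one needs cancellativity of the link to make the inequality $d(v)^{1/(r-1)}|\partial L(v)| \ge (r-1)d(v)$ tight on $V_L$, and a quantitative linearization of $(\ref{inequ-Thm4.4-04})$ so that the total $\epsilon$-excess of $|\mathcal{H}|$ over the extremal value can be distributed across only $O(\epsilon^{1/2}n)$ exceptional vertices of $\widehat{V}_L$.
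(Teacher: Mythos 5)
Your proposal takes essentially the same route as the paper: both use $(\ref{inequ-Thm4.4-04})$ to bound $\sum_v d(v)^{1/(r-1)}|\partial L(v)|$, exploit cancellativity of the links to compare it with $r(r-1)|\mathcal{H}|$, invoke $(\ref{inequ-partial-Lv-low-bound})$ to extract an $\Omega(\epsilon^{1/2}d(v))$ per-vertex deficit on $V_S$, and finally intersect with $\widehat{V}_L$ and apply Claim~\ref{claim-low-bound-hat-VL} to control $|V_S|$. The ``per-vertex slack'' framing is a clean repackaging of the paper's inline computation rather than a genuinely different argument.
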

\begin{proof}[Proof of Claim \ref{claim-low-bound-VL}]
It is easy to see that for every $v \in V(\mathcal{H})$
the $(r-1)$-graph $L(v)$ is also cancellative, so by Theorem~\ref{THM:LM-cancellative},
$d(v) \le \left({|\partial L(v)|}/(r-1)\right)^{(r-1)/(r-2)}$.
Therefore, by the definition of $V_{L}$,
\begin{align}
|\mathcal{H}|
& = \frac{1}{r}\sum_{v \in V(\mathcal{H})}d(v) \notag \\
& = \frac{1}{r} \left( \sum_{v \in V_{L}}\left(d(v)\right)^{\frac{1}{r-1}} \left(d(v)\right)^{\frac{r-2}{r-1}}
    +  \sum_{v \in V_{S}}\left(d(v)\right)^{\frac{1}{r-1}} \left(d(v)\right)^{\frac{r-2}{r-1}} \right) \notag\\
& \le \frac{1}{r(r-1)}\left( \sum_{v \in V_{L}}\left(d(v)\right)^{\frac{1}{r-1}}|\partial L(v)|
    + (1-\epsilon^{1/2})^{\frac{r-2}{r-1}}\sum_{v \in V_{S}}\left(d(v)\right)^{\frac{1}{r-1}}|\partial L(v)| \right)  \notag\\
& = \frac{1}{r(r-1)} \sum_{v \in V(\mathcal{H})}\left(d(v)\right)^{\frac{1}{r-1}}|\partial L(v)|
  - \frac{1-(1-\epsilon^{1/2})^{\frac{r-2}{r-1}}}{r(r-1)} \sum_{v \in V_{S}}\left(d(v)\right)^{\frac{1}{r-1}}|\partial L(v)|.\notag
\end{align}
Together with $(\ref{inequ-partial-Lv-low-bound})$ we obtain
\begin{align}
|\mathcal{H}|
& \le \frac{1}{r(r-1)} \sum_{v \in V(\mathcal{H})}\left(d(v)\right)^{\frac{1}{r-1}}|\partial L(v)|
  - \frac{1-(1-\epsilon^{1/2})^{\frac{r-2}{r-1}}}{(1-\epsilon^{1/2})^{\frac{r-2}{r-1}}} \frac{1}{r} \sum_{v \in V_{S}}d(v) \notag \\
& \le \frac{1}{r(r-1)} \sum_{v \in V(\mathcal{H})}\left(d(v)\right)^{\frac{1}{r-1}}|\partial L(v)|
  - \frac{\epsilon^{1/2}}{2r} \sum_{v \in V_{S}}d(v). \notag
\end{align}
So by $(\ref{inequ-Thm4.4-04})$,
\begin{align}
|\mathcal{H}|
\le  \left(\frac{|\partial \mathcal{H}|}{r} \right)^{\frac{r}{r-1}}
        - \frac{\epsilon^{1/2}}{2r} \sum_{v \in V_{S}}d(v). \notag
\end{align}
Then, it follows from $|\mathcal{H}| > (1-\epsilon)\left(|\partial\mathcal{H}|/r\right)^{r/(r-1)}
\ge \left({|\partial \mathcal{H}|}/{r}\right)^{\frac{r}{r-1}} - \epsilon |\partial \mathcal{H}|^{\frac{r}{r-1}}$
and Claim~\ref{claim-low-bound-hat-VL} that
\begin{align}
\left(\frac{|\partial \mathcal{H}|}{r} \right)^{\frac{r}{r-1}} - \epsilon |\partial \mathcal{H}|^{\frac{r}{r-1}}
& \le \left(\frac{|\partial \mathcal{H}|}{r} \right)^{\frac{r}{r-1}}
  - \frac{\epsilon^{1/2}}{2r} \sum_{v \in V_{S}}d(v) \notag \\
& \le \left(\frac{|\partial \mathcal{H}|}{r} \right)^{\frac{r}{r-1}}
  -  \frac{\epsilon^{1/2}}{2r} |V_{S}\setminus \widehat{V}_{S}|\left(\frac{1}{r}-3r^2\epsilon^{1/2}\right)|\partial\mathcal{H}| \notag\\
& \le \left(\frac{|\partial \mathcal{H}|}{r} \right)^{\frac{r}{r-1}}
   -  \frac{\epsilon^{1/2}}{3r^2} \left(|V_{S}|-36r^{4}\epsilon^{1/2}\right) |\partial\mathcal{H}|, \notag
\end{align}
which implies that $|V_{S}| \le 37r^{4}\epsilon^{1/2}n$.
Therefore, $|V_{L}| = n -|V_S| \ge \left(1- 37r^{4}\epsilon^{1/2}\right)n$.
\end{proof}

Claims \ref{claim-low-bound-hat-VL} and \ref{claim-low-bound-VL} imply that
$|V_{L} \cap \widehat{V}_{L}| > \left(1- 73r^2 \epsilon^{1/2}\right)n$.
Then due to $|\mathcal{H}| \ge (1-\epsilon)(n/r)^{r}$,
there exists an edge $\widehat{E} \in \mathcal{H}[V_{L} \cap \widehat{V}_{L}]$.
By the definition of $V_{L}$ and $\widehat{V}_L$, for every $v \in \widehat{E}$ we have
\begin{align}\label{inequ-d(v)-low-bound-relative}
d(v) \ge (1-\epsilon^{1/2})\left(\frac{|\partial L(v)|}{r-1} \right)^{\frac{r-1}{r-2}},
\end{align}
and
\begin{align}\label{inequ-d(v)-low-bound-absolute}
d(v)
\ge \left( \frac{1}{r} - 3r^2\epsilon^{1/2} \right) |\partial\mathcal{H}|
\ge \left( \frac{1}{r} - 3r^2\epsilon^{1/2} \right) (1-\epsilon)\frac{n^{r-1}}{r^{r-2}}
\ge \left(1-4r^3\epsilon^{1/2}\right)\frac{n^{r-1}}{r^{r-1}}.
\end{align}
On the other hand, since $\sum_{v \in \widehat{E}}d(v) \le |\partial \mathcal{H}|$,
(\ref{inequ-d(v)-low-bound-absolute}) implies that for every $v \in \widehat{E}$,
\begin{align}\label{inequ-d(v)-up-bound-absolute}
d(v)
\le |\partial \mathcal{H}| - (r-1)\left(1-4r^3\epsilon^{1/2}\right)\frac{n^{r-1}}{r^{r-1}}
\le \left(1+4r^4\epsilon^{1/2}\right)\frac{n^{r-1}}{r^{r-1}}.
\end{align}
Notice that $L(v)$ is a cancellative $(r-1)$-graph.
So $(\ref{inequ-d(v)-low-bound-relative})$ and Theorem~\ref{THM:LM-cancellative} imply that
\begin{align}\label{inequ-mathcal(H)v-low-bound-relative}
(1-\epsilon^{1/2})\left(\frac{|\partial L(v)|}{r-1} \right)^{\frac{r-1}{r-2}}
\le |L(v)| \le
\left(\frac{|\partial L(v)|}{r-1} \right)^{\frac{r-1}{r-2}}.
\end{align}
On the other hand, (\ref{inequ-d(v)-low-bound-absolute}) and (\ref{inequ-d(v)-up-bound-absolute}) give
\begin{align}\label{inequ-mathcal(H)v-low-bound-absolute}
\left(1-4r^3\epsilon^{1/2}\right)\frac{n^{r-1}}{r^{r-1}}
\le |L(v)| \le
\left(1+4r^4\epsilon^{1/2}\right)\frac{n^{r-1}}{r^{r-1}}.
\end{align}
Combining $(\ref{inequ-mathcal(H)v-low-bound-relative})$ with $(\ref{inequ-mathcal(H)v-low-bound-absolute})$ we obtain
\begin{align}\label{inequ-lemma-partial-mathcal(H)v-low-up-bound}
\left(1 - 5r^{4}\epsilon^{1/2}\right)(r-1)\left(\frac{n}{r}\right)^{r-2}
\le |\partial L(v)| \le
\left(1+5r^{4}\epsilon^{1/2}\right)(r-1)\left(\frac{n}{r}\right)^{r-2}.
\end{align}
Let $x$ be the real number such that $|\partial L(v)| = x^{r-2}/(r-1)^{r-3}$,
and for convenience let us assume that $x$ is an integer.
Then $(\ref{inequ-lemma-partial-mathcal(H)v-low-up-bound})$ implies that
\begin{align}\label{equ:x-low-up-bounds-expansion}
\left(1 - 5r^{4}\epsilon^{1/2}\right) \frac{r-1}{r}n
\le x
\le \left(1+5r^{4}\epsilon^{1/2}\right) \frac{r-1}{r}n.
\end{align}
Now Lemma~\ref{LEMMA:cancellative-KK-stability-reduction} applied to $L(v)$ implies that
there exists a set $U_v \subset N(v) \subset V(\mathcal{H})$ of size $(1\pm \epsilon_1)x$
(and to keep our calculations simple let us assume that $|U_v| = x$) such that
\begin{align}\label{inequ-H-Uv-low-up-bound}
|L(v)[U_v]|
\ge \left(1-\epsilon_1\right)|L(v)|
\ge \left(1-2\epsilon_1\right)\left(\frac{x}{r-1}\right)^{r-1},
\end{align}
and
\begin{align}\label{inequ-partial-H-Uv-low-up-bound}
|\partial \left(L(v)[U_v]\right)|
\ge \left(1-\epsilon_1\right)|\partial L(v)|
\ge \left(1-2\epsilon_1\right)\frac{x^{r-2}}{(r-1)^{r-3}},
\end{align}
where $\epsilon_{1} = 35r^4\epsilon^{1/4}$
(the exponent $1/4$ is due to $\epsilon^{1/2}$ in the first inequality in $(\ref{inequ-mathcal(H)v-low-bound-relative})$).
On the other hand, it follows from $(\ref{inequ-lemma-partial-mathcal(H)v-low-up-bound})$ and
$(\ref{equ:x-low-up-bounds-expansion})$ that
\begin{align}\label{equ:partial-H-Uv-up-bound-x-expansion}
|\partial \left(L(v)[U_v]\right)|
\le |\partial L(v)|
\le \left(1+2\epsilon_1\right)\frac{x^{r-2}}{(r-1)^{r-3}}.
\end{align}
By $(\ref{inequ-H-Uv-low-up-bound})$, $(\ref{inequ-partial-H-Uv-low-up-bound})$,
$(\ref{equ:partial-H-Uv-up-bound-x-expansion})$, and the induction hypothesis,
there exists a partition $U_v = V_1 \cup \cdots \cup V_{r-1}$ such that
all but at most $\delta_{1} x^{r-1}$ edges in $L(v)[U_v]$ have exactly one vertex in each $V_i$,
where $\delta_1 = \delta_1(r-1,2\epsilon_{1})$ is a sufficiently small constant guaranteed by Lemma~\ref{LEMMA:cancellative-KK-stability-after-reduce}.
Let $L'(v) \subset L(v)$ be the collection of edges in $L(v)$ that have exactly one vertex in each $V_i$.
Then by $(\ref{inequ-mathcal(H)v-low-bound-absolute})$ and $(\ref{inequ-H-Uv-low-up-bound})$,
\begin{align}\label{equ:L'(v)-low-bound-expansion}
|L'(v)|
\ge |L(v)[U_v]| - \delta_1 x^{r-1}
\ge (1-\epsilon_1)|L(v)| - \delta_1 x^{r-1}
\ge (1-\delta_2) \frac{n^{r-1}}{r^{r-1}},
\end{align}
where $\delta_2 = 5r^{3}\epsilon^{1/2} + r^{r}\delta_1$.

Let
$$\mathcal{G} = \left\{A\in\partial\mathcal{H}\colon |N(A)| \ge \left(1 - \epsilon^{1/4}\right)\frac{n}{r} \right\}.$$

\begin{claim}\label{claim-low-bound-SL}
We have $|\mathcal{G}| \ge (1-15r^{5}\epsilon^{1/4}) \frac{n^{r-1}}{r^{r-2}}$.
\end{claim}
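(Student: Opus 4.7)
The plan is to combine cancellativity with double counting. Set $W:=V_L\cap\widehat V_L$, so that $|W|\ge (1-73r^4\epsilon^{1/2})n$ by Claims~\ref{claim-low-bound-hat-VL} and~\ref{claim-low-bound-VL}. The basic idea is that cancellativity, via Lemma~\ref{Lemma:N(s)-cap-Uv=empty}, forces $N(A)\cap N(v)=\emptyset$ whenever $A\in L(v)$; so if $v$ has the ``typical'' degree $|N(v)|\approx (r-1)n/r$ of a vertex of $T_r(n,r)$, then $|N(A)|\le (1+o(1))n/r$ for every $(r-1)$-set $A$ with $v\in N(A)$. Since $\sum_{A\in\partial\mathcal{H}}|N(A)|=r|\mathcal{H}|$ already equals $(1\pm o(1))\cdot|\partial\mathcal{H}|\cdot n/r$ under the hypotheses of the lemma, this upper bound is sharp on average, forcing $|N(A)|\ge (1-o(1))n/r$ for nearly all $A$.

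The first step is to establish $|N(v)|\ge (1-O(\epsilon^{1/4}))(r-1)n/r$ for every $v\in W$. I would do this by applying Lemma~\ref{LEMMA:cancellative-KK-stability-reduction} to the cancellative $(r-1)$-graph $L(v)$: the relation $v\in V_L$ yields the relative density condition required by the lemma (with the natural parameter $x'$ defined by $|\partial L(v)|=(x')^{r-2}/(r-1)^{r-3}$), while $v\in\widehat V_L$, combined with Theorem~\ref{THM:LM-cancellative} applied to $L(v)$, forces $x'\ge (1-O(\epsilon^{1/2}))(r-1)n/r$. The lemma then produces $U_v\subseteq N(v)$ of size $(1-O(\epsilon^{1/4}))x'$, so that $|N(v)|\ge|U_v|\ge (1-O(\epsilon^{1/4}))(r-1)n/r$, as desired.

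The second step partitions $\partial\mathcal{H}=\mathcal{G}\sqcup(\mathcal{G}^c\setminus\mathcal{B})\sqcup\mathcal{B}$, where $\mathcal{B}:=\{A\in\partial\mathcal{H}:N(A)\cap W=\emptyset\}$. For any $A\notin\mathcal{B}$, pick $v\in N(A)\cap W$; Step~1 and Lemma~\ref{Lemma:N(s)-cap-Uv=empty} then give $|N(A)|\le n-|N(v)|\le (1+O(r\epsilon^{1/4}))n/r$. For $A\in\mathcal{B}$ the trivial bound $|N(A)|\le|V\setminus W|\le 73r^4\epsilon^{1/2}n$ applies, which is much smaller than $(1-\epsilon^{1/4})n/r$ for small $\epsilon$; in particular $\mathcal{G}\cap\mathcal{B}=\emptyset$. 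Substituting these three upper bounds into the identity $\sum_A|N(A)|=r|\mathcal{H}|\ge (1-\epsilon)n^r/r^{r-1}$, and using $|\partial\mathcal{H}|\le (1+\epsilon)n^{r-1}/r^{r-2}$, produces a linear inequality of the shape
\[
\epsilon^{1/4}\,|\mathcal{G}^c\setminus\mathcal{B}| + |\mathcal{B}| \;\le\; O(r^5\epsilon^{1/2})\,\frac{n^{r-1}}{r^{r-2}},
\]
whence $|\mathcal{G}^c|\le 15r^5\epsilon^{1/4}\,n^{r-1}/r^{r-2}$ and the claim follows.

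The only genuine difficulty is the first step: verifying that the hypotheses of Lemma~\ref{LEMMA:cancellative-KK-stability-reduction} are met for $L(v)$ uniformly over every $v\in W$, with controlled loss in the $\epsilon$-parameter. The remaining work is routine bookkeeping, balancing the $\epsilon^{1/2}$ errors introduced in Step~2 against the $\epsilon^{1/4}$ deficit hard-coded into the definition of $\mathcal{G}$.
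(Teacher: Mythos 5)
Your proof follows the same core strategy as the paper's: double-count $\sum_{A\in\partial\mathcal{H}}|N(A)|=r|\mathcal{H}|$ and use Lemma~\ref{Lemma:N(s)-cap-Uv=empty} to get $|N(A)|\le n-|N(v)|\approx n/r$ for appropriate $v$. But you deviate from the paper in two structural ways. First, you work with the whole set $W=V_L\cap\widehat V_L$, whereas the paper fixes a single edge $\widehat E\in\mathcal H[W]$ and only uses the vertices of $\widehat E$. Second, you control the ``uncovered'' shadow edges via your set $\mathcal{B}$ together with $|N(A)|\le|V\setminus W|$, whereas the paper uses the disjointness of links inside an edge (Lemma~\ref{LEMMA:cancellative-clique-in-r-2-shadow}) to bound the number of $A$'s not covered by $\bigcup_{v\in\widehat E}L(v)$ by $|\partial\mathcal{H}|-\sum_{v\in\widehat E}|L(v)|$. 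Both variants work, and yours is arguably a little more robust: it never needs the crutch of a specific edge $\widehat E$.

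There is, however, an arithmetic inconsistency between your Step~2 and your final inequality, and it is worth flagging because the paper glosses over exactly the same issue. In Step~2 you (correctly) derive $|N(A)|\le n-|N(v)|\le\bigl(1+O(r\epsilon^{1/4})\bigr)n/r$ for $A\notin\mathcal{B}$: the $\epsilon^{1/4}$ comes from applying Lemma~\ref{LEMMA:cancellative-KK-stability-reduction} to the cancellative $(r-1)$-graph $L(v)$, whose hypotheses are only satisfied with error parameter $\epsilon^{1/2}$, so the output error is $\epsilon_1\sim\epsilon^{1/4}$. Plugging an excess of order $\epsilon^{1/4}$ into the double count produces
\[
\epsilon^{1/4}\,|\mathcal{G}^c\setminus\mathcal{B}| + |\mathcal{B}| \;\le\; O(r^5\epsilon^{1/4})\,\frac{n^{r-1}}{r^{r-2}},
\]
not $O(r^5\epsilon^{1/2})$ as you wrote; this only gives $|\mathcal{G}^c\setminus\mathcal{B}|\le O(r^5)\,n^{r-1}/r^{r-2}$, which is vacuous. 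For the claim's stated constant $(1-15r^5\epsilon^{1/4})$ to come out, the excess $\kappa$ in $|N(A)|\le(1+\kappa)n/r$ must be smaller than the $\epsilon^{1/4}$ deficit hard-wired into the definition of $\mathcal{G}$. The paper secures this only by the stated simplification ``let us assume $|U_v|=x$,'' which discards the $\epsilon_1=35r^4\epsilon^{1/4}$ error and pushes $\kappa$ down to $O(\epsilon^{1/2})$. A fully rigorous version of either proof would redefine $\mathcal{G}$ with a weaker threshold (say $1-\epsilon^{1/8}$) and propagate that exponent.
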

\begin{proof}[Proof of Claim~\ref{claim-low-bound-SL}]
By Lemma~\ref{Lemma:N(s)-cap-Uv=empty} and $(\ref{equ:x-low-up-bounds-expansion})$,
for every $v \in \widehat{E}$ and every $A \in L(v)$ we have
\begin{align}
|N(A)|
\le |V(\mathcal{H})\setminus N(v)|
\le |V(\mathcal{H})\setminus U_{v}|
\le \left(1 + 5r^{5}\epsilon^{1/2}\right) \frac{n}{r}. \notag
\end{align}
Therefore, by $(\ref{inequ-d(v)-up-bound-absolute})$ and Lemma~\ref{LEMMA:cancellative-clique-in-r-2-shadow}, all but at most
\begin{align}
|\partial\mathcal{H}| - \sum_{v\in \widehat{E}}|L(v)|
\le (1+\epsilon)\frac{n^{r-1}}{r^{r-2}} - r\left(1-4r^3\epsilon^{1/2}\right)\frac{n^{r-1}}{r^{r-1}}
\le 5r^4\epsilon^{1/2}\frac{n^{r-1}}{r^{r-1}} \notag
\end{align}
edges $A \in \partial \mathcal{H}$ satisfy $N(A)\le \left(1 + 5r^{5}\epsilon^{1/2}\right)n/r$.
It follows that
\begin{align}
r|\mathcal{H}|
& = \sum_{A \in \mathcal{G}}N(A) + \sum_{A \in \partial\mathcal{H}\setminus \mathcal{G}}N(A) \notag\\
& \le  |\mathcal{G}|\left(1 + 5r^{5}\epsilon^{1/2}\right) \frac{n}{r}
       + 5r^4\epsilon^{1/2}\frac{n^{r-1}}{r^{r-1}} \cdot n
       + |\partial\mathcal{H}\setminus \mathcal{G}| \left(1 - \epsilon^{1/4}\right)\frac{n}{r} \notag\\
& = |\partial\mathcal{H}|\left(1 - \epsilon^{1/4}\right)\frac{n}{r}
       + |\mathcal{G}|\left(\epsilon^{1/4}+5r^{5}\epsilon^{1/2}\right) \frac{n}{r}
       + 5r^4\epsilon^{1/2}\frac{n^{r}}{r^{r-1}} \notag\\
& \le (1+\epsilon)\frac{n^{r-1}}{r^{r-2}}\left(1 - \epsilon^{1/4}\right)\frac{n}{r}
       + |\mathcal{G}|\left(\epsilon^{1/4}+5r^{5}\epsilon^{1/2}\right)\frac{n}{r}
       + 5r^4\epsilon^{1/2}\frac{n^{r}}{r^{r-1}} \notag\\
& \le \left(1 - \epsilon^{1/4}+\epsilon\right)\frac{n^{r}}{r^{r-1}}
       + |\mathcal{G}|\left(\epsilon^{1/4}+5r^{5}\epsilon^{1/2}\right)
       + 5r^4\epsilon^{1/2}\frac{n^{r}}{r^{r-1}}. \notag
\end{align}
Since $|\mathcal{H}| \ge (1-\epsilon)n^r/r^r$, the inequality above implies
\begin{align}
\left(1 - \epsilon^{1/4}+\epsilon\right)\frac{n^{r}}{r^{r-1}}
       + |\mathcal{G}|\left(\epsilon^{1/4}+5r^{5}\epsilon^{1/2}\right)\frac{n}{r}
       + 5r^4\epsilon^{1/2}\frac{n^{r}}{r^{r-1}}
\le (1-\epsilon)\frac{n^r}{r^{r-1}}. \notag
\end{align}
Therefore,
\begin{align}
|\mathcal{G}|
\ge \frac{\epsilon^{1/4}-2\epsilon - 5r^4\epsilon^{1/2}}{\epsilon^{1/4}+ 5r^{5}\epsilon^{1/2}}\frac{n^{r-1}}{r^{r-2}}
\ge (1-15r^{5}\epsilon^{1/4})\frac{n^{r-1}}{r^{r-2}}. \notag
\end{align}
\end{proof}

Now fix $v\in \widehat{E}$ and let $V_{r} = V(\mathcal{H})\setminus U_v$.
By Lemma~\ref{Lemma:N(s)-cap-Uv=empty}, every edge $A\in L'(v)$ satisfies $N(A) \subset V_{r}$.
By Claim \ref{claim-low-bound-SL} all but at most
\begin{align}
|\partial\mathcal{H}| - |\mathcal{G}|
\le 16r^{5}\epsilon^{1/4}\frac{n^{r-1}}{r^{r-2}} \notag
\end{align}
edges $A \in L'(v)$ satisfy $|N(A)| \ge (1-\epsilon^{1/4})n/r$.
Therefore, by $(\ref{equ:L'(v)-low-bound-expansion})$,
the number of edges in $\mathcal{H}$ that have exactly one vertex in each $V_i$ is at least
\begin{align}
\left(|L'(v)|-16r^{5}\epsilon^{1/4}\frac{n^{r-1}}{r^{r-2}}\right)(1-\epsilon^{1/4})\frac{n}{r}
& \ge  \left((1-\delta_2) \frac{n^{r-1}}{r^{r-1}}-16r^{5}\epsilon^{1/4}\frac{n^{r-1}}{r^{r-2}}\right)(1-\epsilon^{1/4})\frac{n}{r} \notag\\
& \ge (1-\delta_3) \left(\frac{n}{r}\right)^{r}, \notag
\end{align}
where $\delta_3 = \delta_2 + 17r^{6}\epsilon^{1/4}$ (we can choose $\epsilon>0$ to be sufficiently small such that $\delta_3 \le \delta$).
This completes the proof of Lemma~\ref{LEMMA:cancellative-KK-stability-after-reduce}.
\end{proof}

\section{Expansion of cliques}\label{SEC:proof-KK-stability-clique-expansion}

\subsection{Preliminaries}\label{SUBSEC:prelim-clique-expansion}
For an $r$-graph $\mathcal{H}$ the {\em clique set} $\mathcal{K}_{\mathcal{H}}$ of $\mathcal{H}$ is
\begin{align}
\mathcal{K}_{\mathcal{H}} = \left\{A\subset V(\mathcal{H})\colon \left(\partial_{r-2}\mathcal{H}\right)[A] \cong K_{|A|}\right\}. \notag
\end{align}
It was prove in \cite{LM19A} that
\begin{align}
\sigma(S) \le (\ell-r+1)|\partial\mathcal{H}|, \quad \forall S\in \mathcal{K}_{\mathcal{H}}. \notag
\end{align}
Let $z  = z(\mathcal{H})\ge 0$ be the largest real number such that for all $R \in \mathcal{K}_{\mathcal{H}}$ with $|R| \le \ell-1$,
\[\sigma(R) \le \left(\ell-r+1\right)|\partial\mathcal{H}| - \left(\ell-|R|\right)z.\]

The following inequalities can be found in {\cite[Section 5]{LM19A}}.

\begin{lemma}[\cite{LM19A}]\label{lemma-liu-mubayi-inequality}
Let $\mathcal{H}$ be a $\mathcal{K}_{\ell+1}^{r}$-free $r$-graph,
and $R_0 \in \mathcal{K}_{\mathcal{H}}$ be a set of size at most $\ell-1$ with
$\sigma(R_0) = \left(\ell-r+1\right)|\partial\mathcal{H}| - \left(\ell-|R|\right)z$,
where $z = z(\mathcal{H}) \ge 0$ is defined as above.
Then
\begin{align}\label{inequality-H-sigma(S)}
|\mathcal{H}|
\le \frac{\binom{\ell-1}{r-1}^{\frac{r-2}{r-1}}}{r\binom{\ell-1}{r-2}}(r-1)^{\frac{r-2}{r-1}}|\partial\mathcal{H}|^{\frac{r-2}{r-1}}
\left(\sum_{E\in \partial\mathcal{H}}\sigma(E)\right)^{\frac{1}{r-1}},
\end{align}
\begin{align}\label{inequality-sum-sigma(S)}
\sum_{E\in \partial\mathcal{H}}\sigma(E)
\le \left(\ell-r+1\right)\left(|\partial\mathcal{H}|-2z\right)|\partial\mathcal{H}|
+ z^2 \ell - \frac{\left((\ell-r+1)|\partial\mathcal{H}|-z\ell\right)^2}{|R_0|}.
\end{align}
\end{lemma}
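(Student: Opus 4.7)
The plan is to prove the two displayed inequalities separately; both are algebraic consequences of extracting $\mathcal{K}_\ell^{r-1}$-free structure inside $\mathcal{H}$ and then executing a careful H\"older / Cauchy--Schwarz computation.

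For (\ref{inequality-H-sigma(S)}), I would first observe that every link $L(v)$ is itself $\mathcal{K}_\ell^{r-1}$-free: a $\mathcal{K}_\ell^{r-1}$ inside $L(v)$ with core $T$ lifts, by appending $v$ to each of its edges, to a $\mathcal{K}_{\ell+1}^r$ inside $\mathcal{H}$ with core $\{v\}\cup T$. Applying Theorem~\ref{THM:LM-clique-expansion} to $L(v)$ and raising to the power $(r-2)/(r-1)$ gives the link bound
\[
d(v)^{\frac{r-2}{r-1}} \le \binom{\ell-1}{r-1}^{\frac{r-2}{r-1}}\,\frac{|\partial L(v)|}{\binom{\ell-1}{r-2}}.
\]
Substituting this into $r|\mathcal{H}|=\sum_v d(v)=\sum_v d(v)^{(r-2)/(r-1)}\cdot d(v)^{1/(r-1)}$ bounds $r|\mathcal{H}|$ by $\binom{\ell-1}{r-1}^{(r-2)/(r-1)}/\binom{\ell-1}{r-2}$ times $\sum_v d(v)^{1/(r-1)}|\partial L(v)|$. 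I would then apply H\"older with exponents $r-1$ and $(r-1)/(r-2)$, splitting each summand as $\bigl(d(v)|\partial L(v)|\bigr)^{1/(r-1)}\cdot|\partial L(v)|^{(r-2)/(r-1)}$. Two bookkeeping identities close the argument: $\sum_v d(v)|\partial L(v)|=\sum_{E\in\partial\mathcal{H}}\sigma(E)$ (swap the order of summation in $\sum_E\sum_{v\in E}d(v)$), and $\sum_v|\partial L(v)|=(r-1)|\partial\mathcal{H}|$ (each shadow edge $E$ contributes $|E|=r-1$ vertex incidences). These deliver exactly the two factors on the right-hand side of (\ref{inequality-H-sigma(S)}), with the stated combinatorial coefficient $\binom{\ell-1}{r-1}^{(r-2)/(r-1)}(r-1)^{(r-2)/(r-1)}/\bigl(r\binom{\ell-1}{r-2}\bigr)$.

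For (\ref{inequality-sum-sigma(S)}), I would exploit the extremality of $R_0$ in two directions. Applying the defining inequality of $z$ to $R_0\setminus\{v\}$ for each $v\in R_0$ yields $d(v)\ge z$ on $R_0$, while applying it to $R_0\cup\{u\}$ for $u\notin R_0$ (whenever this set still lies in $\mathcal{K}_\mathcal{H}$) yields $d(u)\le z$. Rewrite $\sum_{E\in\partial\mathcal{H}}\sigma(E)=\sum_v d(v)\,d_{\partial\mathcal{H}}(v)$ and split the sum across $R_0$ and its complement. On $R_0$, a Cauchy--Schwarz step $\sum_{v\in R_0}d(v)^2\ge\sigma(R_0)^2/|R_0|$, combined with the identity $\sigma(R_0)=(\ell-r+1)|\partial\mathcal{H}|-(\ell-|R_0|)z$, furnishes the subtractive correction $-\bigl((\ell-r+1)|\partial\mathcal{H}|-z\ell\bigr)^2/|R_0|$. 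Off $R_0$, the uniform bound $d(u)\le z$ together with the edgewise estimate $\sigma(E)\le(\ell-r+1)(|\partial\mathcal{H}|-z)$ (valid for every $(r-1)$-edge $E$, which lies in $\mathcal{K}_\mathcal{H}$) produces the remaining terms $(\ell-r+1)(|\partial\mathcal{H}|-2z)|\partial\mathcal{H}|+z^2\ell$.

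The main obstacle is the bookkeeping in (\ref{inequality-sum-sigma(S)}): vertices $u\notin R_0$ for which $R_0\cup\{u\}$ \emph{fails} to lie in $\mathcal{K}_\mathcal{H}$ are not covered by the $d(u)\le z$ bound, so their contribution must be absorbed via a secondary estimate on the pair-covering structure of $\partial_{r-2}\mathcal{H}$. Matching the precise constants $z^2\ell$ and the denominator $|R_0|$ in the quadratic correction requires tracking how the Cauchy--Schwarz step on $R_0$ interacts with $\sigma(R_0)$ once these exceptional vertices are isolated; this is the delicate step where the specific combinatorial coefficients in (\ref{inequality-sum-sigma(S)}) emerge, and matches the tight case achieved by the generalized Tur\'an graph $T_r(n,\ell)$.
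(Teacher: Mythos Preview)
The paper does not prove this lemma; it is quoted verbatim from \cite{LM19A} (``The following inequalities can be found in {[Section~5]{LM19A}}'') with no argument given. So there is no proof in the present paper to compare against; I can only assess the internal soundness of your plan.

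Your argument for (\ref{inequality-H-sigma(S)}) is correct and essentially complete. The observation that each link $L(v)$ is $\mathcal{K}_{\ell}^{r-1}$-free, the application of Theorem~\ref{THM:LM-clique-expansion} to obtain the link bound, the H\"older step, and the two bookkeeping identities $\sum_v d(v)\,|\partial L(v)| = \sum_{E\in\partial\mathcal{H}}\sigma(E)$ and $\sum_v |\partial L(v)| = (r-1)|\partial\mathcal{H}|$ assemble exactly into the stated inequality with the stated constant.

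Your outline for (\ref{inequality-sum-sigma(S)}), however, has a genuine mismatch. You rewrite $\sum_{E}\sigma(E) = \sum_v d(v)\,d_{\partial\mathcal{H}}(v)$, but the Cauchy--Schwarz step you then invoke is on $\sum_{v\in R_0} d(v)^2$. These are different quantities: $d_{\partial\mathcal{H}}(v)=|\partial L(v)|$ is not $d(v)$, and you give no mechanism to convert one into the other. So the subtractive term $-\sigma(R_0)^2/|R_0|$ you produce does not come out of the expression you are actually bounding. In addition, you yourself flag the case of vertices $u\notin R_0$ with $R_0\cup\{u\}\notin\mathcal{K}_{\mathcal{H}}$ as ``the delicate step'' and leave it unresolved; this is not a minor bookkeeping issue, since without $d(u)\le z$ for such $u$ your off-$R_0$ estimate collapses. (The case $|R_0|=\ell-1$, incidentally, is fine: if $R_0\cup\{u\}\in\mathcal{K}_{\mathcal{H}}$ has size $\ell$, the blanket bound $\sigma(S)\le(\ell-r+1)|\partial\mathcal{H}|$ for $S\in\mathcal{K}_{\mathcal{H}}$ still yields $d(u)\le z$.) As written, the second half is a sketch with a real gap rather than a proof.
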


\subsection{Proof of Theorem~\ref{THM:stability-expansion-clique-KK-type}}
The proof is similar to the proof of Theorem~\ref{THM:stability-cancellative-KK-type},
but it is simpler because we just need to prove a similar result as Lemma~\ref{LEMMA:cancellative-KK-stability-reduction}
and then we can use Theorem~\ref{THM:old-stability-H} directly
\footnote{One could also use a similar inductive argument to prove a similar result as Lemma~\ref{LEMMA:cancellative-KK-stability-after-reduce}
to avoid using Theorem~\ref{THM:old-stability-H}.}.

\begin{proof}[Proof of Theorem~\ref{THM:stability-expansion-clique-KK-type}]
Fix $\ell\ge r\ge 2$ and $\delta > 0$.
Let $\epsilon>0$ be a sufficiently constant and $x>0$ be a sufficiently large real number.
Let $\mathcal{H}$ be a $\mathcal{K}_{\ell+1}^{r}$-free $r$-graph satisfying the assumptions in
Theorem~\ref{THM:stability-expansion-clique-KK-type}.
Notice that the inequality in $(\ref{equ:expansion-assumption})$ is equivalent to
\begin{align}\label{inequality-H-and-shadow-H}
|\mathcal{H}| \ge (1-\epsilon)\frac{\binom{\ell}{r}}{\binom{\ell}{r-1}^{\frac{r}{r-1}}}|\partial\mathcal{H}|^{\frac{r}{r-1}}.
\end{align}
Let $z = z(\mathcal{H})$ be the same as defined in Section~\ref{SUBSEC:prelim-clique-expansion},
and let $R_0 \in \mathcal{K}_{\mathcal{H}}$ be a set of size at most $\ell-1$ with
\begin{align}
\sigma(R_{0}) = \left(\ell-r+1\right)|\partial\mathcal{H}| - \left(\ell-|R_{0}|\right)z. \notag
\end{align}

\begin{claim}\label{claim-size-sun-sigma(S)}
We have
\begin{align}\label{equ:sum-sigam-shadow}
\sum_{E \in \partial\mathcal{H}}\sigma(E) \le (1-r\epsilon)\frac{(\ell-r+1)(r-1)}{\ell}|\partial\mathcal{H}|^2.
\end{align}
\end{claim}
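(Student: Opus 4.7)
As written the inequality direction appears to be inverted: the Tur\'an graph $T_r(n,\ell)$ satisfies the hypothesis $(\ref{inequality-H-and-shadow-H})$ and realizes $\sum_{E\in\partial\mathcal{H}}\sigma(E) = \frac{(\ell-r+1)(r-1)}{\ell}|\partial\mathcal{H}|^2$ exactly, so a strict upper bound by a $(1-r\epsilon)$-fraction of this value is untenable; the intended direction must therefore be $\ge$. (The complementary unconditional upper bound $\le \frac{(\ell-r+1)(r-1)}{\ell}|\partial\mathcal{H}|^2$ is a direct consequence of maximizing the right-hand side of $(\ref{inequality-sum-sigma(S)})$ in $z$, which peaks at $z = (\ell-r+1)|\partial\mathcal{H}|/\ell$; so the substantive task is the sharp lower bound.)

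My plan is to combine inequalities $(\ref{inequality-H-sigma(S)})$ and $(\ref{inequality-H-and-shadow-H})$ directly, without engaging $(\ref{inequality-sum-sigma(S)})$ or the auxiliary parameter $z$ in any delicate way. First I would solve $(\ref{inequality-H-sigma(S)})$ for $\sum_E \sigma(E)$, obtaining
$$\sum_{E\in\partial\mathcal{H}}\sigma(E) \ge \frac{r^{r-1}\binom{\ell-1}{r-2}^{r-1}}{(r-1)^{r-2}\binom{\ell-1}{r-1}^{r-2}}\cdot \frac{|\mathcal{H}|^{r-1}}{|\partial\mathcal{H}|^{r-2}}.$$
Next I would substitute $|\mathcal{H}|^{r-1} \ge (1-\epsilon)^{r-1}\,\binom{\ell}{r}^{r-1}/\binom{\ell}{r-1}^{r}\cdot |\partial\mathcal{H}|^{r}$ from $(\ref{inequality-H-and-shadow-H})$ to reach a bound of the form $(1-\epsilon)^{r-1}\,C(\ell,r)\,|\partial\mathcal{H}|^2$ for an explicit constant $C(\ell,r)$ collecting the two prefactors.

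The remaining task is to verify $C(\ell,r) = \frac{(\ell-r+1)(r-1)}{\ell}$ using the three binomial identities
$$\binom{\ell}{r} = \frac{\ell-r+1}{r}\binom{\ell}{r-1},\qquad \binom{\ell-1}{r-2} = \frac{r-1}{\ell-r+1}\binom{\ell-1}{r-1},\qquad \binom{\ell-1}{r-1} = \frac{\ell-r+1}{\ell}\binom{\ell}{r-1}.$$
Substituting these, nearly all factors of $\binom{\ell}{r-1}$, $(r-1)$, and $(\ell-r+1)$ cancel, leaving precisely $(r-1)(\ell-r+1)/\ell$. Combined with $(1-\epsilon)^{r-1}\ge 1-(r-1)\epsilon \ge 1-r\epsilon$, this completes the argument. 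The main obstacle is pure bookkeeping in this final simplification; there is no combinatorial difficulty, since the compatibility between the constants in $(\ref{inequality-H-sigma(S)})$ and $(\ref{inequality-H-and-shadow-H})$ is exactly what guarantees the clean cancellation.
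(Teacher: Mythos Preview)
Your diagnosis of the sign error is correct: the paper's own proof assumes the negation and derives $|\mathcal{H}| < (1-\epsilon)\binom{\ell}{r}\binom{\ell}{r-1}^{-r/(r-1)}|\partial\mathcal{H}|^{r/(r-1)}$, which only makes sense if the intended inequality is $\ge$ (and indeed the subsequent Claim~\ref{claim-size-z} uses it in that direction). Your argument is then essentially identical to the paper's --- you solve $(\ref{inequality-H-sigma(S)})$ for $\sum_E\sigma(E)$ and substitute $(\ref{inequality-H-and-shadow-H})$ directly, whereas the paper runs the same chain contrapositively --- and your binomial simplification of the constant $C(\ell,r)$ is exactly the computation hidden in the paper's one-line ``$= (1-r\epsilon)^{1/(r-1)}\binom{\ell}{r}\binom{\ell}{r-1}^{-r/(r-1)}|\partial\mathcal{H}|^{r/(r-1)}$''.
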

\begin{proof}[Proof of Claim \ref{claim-size-sun-sigma(S)}]
Suppose to the contrary that $(\ref{equ:sum-sigam-shadow})$ fails.
Then by $(\ref{inequality-H-sigma(S)})$,
\begin{align}
|\mathcal{H}|
& \le \frac{\binom{\ell-1}{r-1}^{\frac{r-2}{r-1}}}{r\binom{\ell-1}{r-2}}(r-1)^{\frac{r-2}{r-1}}|\partial\mathcal{H}|^{\frac{r-2}{r-1}}
     \left(\sum_{E\in \partial\mathcal{H}}\sigma(E)\right)^{\frac{1}{r-1}} \notag\\
& \le (1-r\epsilon)^{\frac{1}{r-1}}
    \frac{\binom{\ell-1}{r-1}^{\frac{r-2}{r-1}}}{r\binom{\ell-1}{r-2}}(r-1)^{\frac{r-2}{r-1}}|\partial\mathcal{H}|^{\frac{r-2}{r-1}}
    \frac{ (\ell-r+1)^{\frac{1}{r-1}} (r-1)^{\frac{1}{r-1}} }{ \ell^{\frac{1}{r-1}} } |\partial\mathcal{H}|^{\frac{2}{r-1}} \notag\\
& = (1-r\epsilon)^{\frac{1}{r-1}} \frac{\binom{\ell}{r}}{\binom{\ell}{r-1}^{\frac{r}{r-1}}}|\partial\mathcal{H}|^{\frac{r}{r-1}}
< (1-\epsilon)\frac{\binom{\ell}{r}}{\binom{\ell}{r-1}^{\frac{r}{r-1}}}|\partial\mathcal{H}|^{\frac{r}{r-1}}, \notag
\end{align}
contradicting $(\ref{inequality-H-and-shadow-H})$.
\end{proof}

Next, we show that $z$ is close to $\frac{\ell-r+1}{\ell}|\partial\mathcal H|$.

\begin{claim}\label{claim-size-z}
We have
\begin{align}\label{equ:z-low-up-bound}
z =  (1 \pm \ell r \epsilon^{1/2})\frac{\ell-r+1}{\ell}|\partial\mathcal{H}|.
\end{align}
\end{claim}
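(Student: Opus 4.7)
The plan is to treat the inequality $(\ref{inequality-sum-sigma(S)})$ from Lemma~\ref{lemma-liu-mubayi-inequality} as a concave quadratic upper bound on $\sum_{E\in\partial\mathcal{H}}\sigma(E)$ in the variable $z$, and then to combine it with the near-tight bound from Claim~\ref{claim-size-sun-sigma(S)} to pin $z$ to the maximizer of that quadratic.

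First, set $N = |\partial\mathcal{H}|$, $k = \ell - r + 1$, and $\rho = |R_0|$, and write
\[
g(z) := k(N - 2z)N + z^2\ell - \frac{(kN - z\ell)^2}{\rho},
\]
so that $(\ref{inequality-sum-sigma(S)})$ reads $\sum_{E\in\partial\mathcal{H}}\sigma(E) \le g(z)$. Since $|R_0| \le \ell - 1$, the coefficient of $z^2$ in $g$ equals $\ell - \ell^2/\rho = -\ell(\ell - \rho)/\rho < 0$, so $g$ is strictly concave in $z$. A short computation completing the square yields the identity
\[
g(z) \;=\; -\frac{\ell(\ell - \rho)}{\rho}\left(z - \frac{kN}{\ell}\right)^2 \;+\; \frac{(\ell-r+1)(r-1)}{\ell}\,N^2.
\]
In particular, $g$ attains its maximum exactly at $z^* := \frac{\ell-r+1}{\ell}|\partial\mathcal{H}|$, and the maximum value is precisely $g(z^*) = \frac{(\ell-r+1)(r-1)}{\ell}|\partial\mathcal{H}|^2$, which is the quantity appearing in Claim~\ref{claim-size-sun-sigma(S)}.

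Combining the two estimates (where Claim~\ref{claim-size-sun-sigma(S)} is applied in the direction used in its own proof by contradiction, namely $\sum_{E\in\partial\mathcal{H}}\sigma(E) \ge (1-r\epsilon)\,g(z^*)$) produces
\[
\frac{\ell(\ell - \rho)}{\rho}(z - z^*)^2 \;=\; g(z^*) - g(z) \;\le\; r\epsilon\, g(z^*).
\]
Isolating $|z-z^*|$ and using $\rho \le \ell - 1$, so that $\rho/(\ell-\rho) \le \ell - 1 \le \ell$, gives
\[
(z - z^*)^2 \;\le\; \frac{r(r-1)(\ell-1)(\ell-r+1)}{\ell^2}\,\epsilon\, N^2.
\]
A routine check using $\ell \ge r \ge 2$ shows that the square root of the right-hand side is at most $\ell r\,\epsilon^{1/2}\cdot\frac{\ell-r+1}{\ell}N$, which is exactly the statement of the claim.

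The only real obstacle is spotting the algebraic identity that rewrites $g(z)$ as a concave quadratic whose maximum value matches the lower bound from Claim~\ref{claim-size-sun-sigma(S)}; once that identity is in place the argument is purely quadratic, and there is comfortable slack in the constants, so the prefactor $\ell r$ in the claim is far from tight.
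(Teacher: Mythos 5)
Your proof is correct and follows essentially the same route as the paper: both complete the square in $z$ on the right-hand side of (\ref{inequality-sum-sigma(S)}) to obtain the identity $g(z) = g(z^*) - \frac{\ell(\ell-|R_0|)}{|R_0|}(z-z^*)^2$ with $z^* = \frac{\ell-r+1}{\ell}|\partial\mathcal{H}|$, then invoke Claim~\ref{claim-size-sun-sigma(S)} (correctly read as a lower bound on $\sum_{E\in\partial\mathcal{H}}\sigma(E)$, despite the $\le$ sign appearing in the claim's statement being a typo) together with $|R_0|\le\ell-1$. The paper phrases this as a contradiction argument while you derive the bound on $|z - z^*|$ directly, but the algebra is the same.
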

\begin{proof}[Proof of Claim \ref{claim-size-z}]
Suppose to the contrary that $(\ref{equ:z-low-up-bound})$ fails.
Then by $(\ref{inequality-sum-sigma(S)})$,
\begin{align}
\sum_{E \in \partial\mathcal{H}}\sigma(E)
& \le \left(\ell-r+1\right)\left(|\partial\mathcal{H}|-2z\right)|\partial\mathcal{H}|
        + z^2 \ell - \frac{\left((\ell-r+1)|\partial\mathcal{H}|-z\ell\right)}{|R_0|} \notag\\
& = \frac{(\ell-r+1)(r-1)}{\ell}|\partial\mathcal{H}|^2
      - \ell \left(\frac{\ell}{|R_0|}-1\right)\left(z-\frac{\ell-r+1}{\ell}|\partial\mathcal{H}|\right)^2 \notag\\
& \le \frac{(\ell-r+1)(r-1)}{\ell}|\partial\mathcal{H}|^2
     - \ell \left(\frac{\ell}{\ell-1}-1\right)\left(\ell r \epsilon^{1/2}\cdot \frac{\ell-r+1}{\ell}|\partial\mathcal{H}|\right)^2 \notag\\
& < (1- r\epsilon)\frac{(\ell-r+1)(r-1)}{\ell}|\partial\mathcal{H}|^2 \notag
\end{align}
contradicting Claim~\ref{claim-size-sun-sigma(S)}.
\end{proof}

It follows from the definition of $z$ and Claim \ref{claim-size-z} that
for every $R \in \mathcal{K}_{\mathcal{H}}$ with $|R| \le \ell-1$,
\begin{align}
\sigma(R)
& \le \left(\ell-r+1\right)|\partial\mathcal{H}| - \left(\ell-|R|\right)z \notag\\
& \le \left(\ell-r+1\right)|\partial\mathcal{H}| - \left(\ell-|R|\right)
           (1-\ell r \epsilon^{1/2})\frac{\ell-r+1}{\ell}|\partial\mathcal{H}| \notag\\
& \le (1+\ell^2 r \epsilon^{1/2})\frac{(\ell-r+1)|R|}{\ell}|\partial\mathcal{H}|. \notag
\end{align}
In particular, for every $v \in V(\mathcal{H})$ we have
\begin{align}\label{equ-upper-max-degree-v}
d(v) = \sigma(\{v\})
\le (1+\ell^2 r \epsilon^{1/2})\frac{\ell-r+1}{\ell}|\partial\mathcal{H}|,
\end{align}
and for every $E \in \partial\mathcal{H}$ we have
\begin{align}\label{equ-upper-sigma-shadow-E}
\sigma(E)
\le (1+\ell^2 r \epsilon^{1/2})\frac{(\ell-r+1)(r-1)}{\ell}|\partial\mathcal{H}|.
\end{align}

Let
$$\mathcal{G} = \left\{E \in \partial\mathcal{H}\colon
                       \sigma(E) \ge (1-\epsilon^{1/4}) \frac{(\ell-r+1)(r-1)}{\ell}|\partial\mathcal{H}|\right\}.$$

\begin{claim}\label{claim-size-G}
We have $|\mathcal{G}| \ge (1-\ell^2 r \epsilon^{1/4})|\partial\mathcal{H}|$.
\end{claim}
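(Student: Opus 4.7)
My plan is to apply the concentration lemma (Lemma~\ref{LEMMA:function-concentrate}) to the map $f \colon \partial\mathcal{H} \to \mathbb{R}$ given by $f(E) = \sigma(E)$. The lemma requires a lower bound on the average $\bar f = |\partial\mathcal{H}|^{-1}\sum_{E}\sigma(E)$ and an upper bound on the deviation $\max_E f(E) - \bar f$, and both ingredients are essentially already in hand. For the average, I will use Claim~\ref{claim-size-sun-sigma(S)} in the direction its proof actually establishes, namely $\sum_{E \in \partial\mathcal{H}} \sigma(E) \ge (1 - r\epsilon)\frac{(\ell - r + 1)(r - 1)}{\ell}|\partial\mathcal{H}|^{2}$; writing $C := \frac{(\ell - r + 1)(r - 1)}{\ell}|\partial\mathcal{H}|$, dividing through gives $\bar f \ge (1 - r\epsilon) C$. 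For the pointwise upper bound I will invoke (\ref{equ-upper-sigma-shadow-E}), which gives $\max_E \sigma(E) \le (1 + \ell^{2} r \epsilon^{1/2}) C$, so $\max_E \sigma(E) - \bar f \le (\ell^{2} r \epsilon^{1/2} + r \epsilon) C \le 2\ell^{2} r \epsilon^{1/2} C$ once $\epsilon$ is sufficiently small.

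Next I will set $\delta_{2} := 2\ell^{2} r \epsilon^{1/2} C$ and $\delta_{1} := \bar f - (1 - \epsilon^{1/4})C$, chosen so that every $E \notin \mathcal{G}$ satisfies $\sigma(E) \le \bar f - \delta_{1}$. The estimates above yield $\delta_{1} \ge (\epsilon^{1/4} - r \epsilon)C \ge \tfrac{1}{2}\epsilon^{1/4} C$ provided $\epsilon$ is small. Lemma~\ref{LEMMA:function-concentrate} then delivers
$$|\partial\mathcal{H} \setminus \mathcal{G}| \;\le\; \frac{\delta_{2}}{\delta_{1} + \delta_{2}}\,|\partial\mathcal{H}| \;\le\; \frac{2\ell^{2} r \epsilon^{1/2}}{\tfrac12\epsilon^{1/4}}\,|\partial\mathcal{H}| \;=\; 4\ell^{2} r \epsilon^{1/4}\,|\partial\mathcal{H}|,$$
which is the desired bound $|\mathcal{G}| \ge (1 - \ell^{2} r \epsilon^{1/4})|\partial\mathcal{H}|$ after absorbing the constant $4$ into a slightly smaller choice of $\epsilon$ (or, more honestly, weakening $\ell^2 r$ to $4\ell^2 r$, since the exponent $1/4$ is all that matters downstream).

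The argument is essentially a one-shot concentration estimate, and I do not anticipate any substantive obstacle. The only delicate point is bookkeeping: one must verify that the inequality of Claim~\ref{claim-size-sun-sigma(S)} really supplies a lower bound on $\sum_E \sigma(E)$ (as it does in the context of its own proof, where it is combined with the lower bound on $|\mathcal{H}|$ and the Kruskal--Katona style bound (\ref{inequality-H-sigma(S)})), and that the pointwise cap (\ref{equ-upper-sigma-shadow-E}) matches the constant $C$ exactly, so that the gap between $\bar f$ and the threshold $(1 - \epsilon^{1/4})C$ dominates the gap between $\max_E \sigma(E)$ and $\bar f$. Once these two pieces are aligned at the common normalization $C$, applying Lemma~\ref{LEMMA:function-concentrate} is immediate.
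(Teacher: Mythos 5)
Your proposal is correct and matches the paper's own argument: both apply Lemma~\ref{LEMMA:function-concentrate} to $f=\sigma$ on $V=\partial\mathcal{H}$, with the pointwise cap from $(\ref{equ-upper-sigma-shadow-E})$ and the average bound from Claim~\ref{claim-size-sun-sigma(S)}, which (as you correctly observe) its proof by contradiction actually establishes as the lower bound $\sum_{E}\sigma(E)\ge(1-r\epsilon)\frac{(\ell-r+1)(r-1)}{\ell}|\partial\mathcal{H}|^2$ --- the $\le$ in its statement is a typo. Your constant is a factor of $4$ looser only because you drop $\delta_2$ from the denominator and round the bounds, whereas the paper keeps $\delta_1+\delta_2$ intact before simplifying; this is immaterial, as you note.
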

\begin{proof}[Proof of Claim \ref{claim-size-G}]
It follows from Lemma~\ref{LEMMA:function-concentrate},  Claim~\ref{claim-size-sun-sigma(S)}, and
$(\ref{equ-upper-sigma-shadow-E})$ that
\begin{align}
|\partial\mathcal{H}\setminus \mathcal{G}|
& \le \frac{(1+\ell^2 r \epsilon^{1/2})(\ell-r+1)(r-1)|\partial\mathcal{H}|/\ell
        -\sum_{E\in\partial\mathcal{H}}\sigma(E)/|\partial\mathcal{H}|}
        {(1+\ell^2 r \epsilon^{1/2})(\ell-r+1)(r-1)|\partial\mathcal{H}|/\ell
          -(1-\epsilon_4)(\ell-r+1)(r-1)|\partial\mathcal{H}|/\ell}|\partial\mathcal{H}| \notag\\
& \le \frac{\ell^2 r \epsilon^{1/2} + r\epsilon}{\ell^2 r \epsilon^{1/2} + \epsilon^{1/4}} |\partial\mathcal{H}|
\le \ell^2 r \epsilon^{1/4}|\partial\mathcal{H}|. \notag
\end{align}
\end{proof}

Let $U = \partial_{r-2}\mathcal{G} \subset V(\mathcal{H})$.

\begin{claim}\label{claim-d(u)-lower}
For every $u \in U$ we have $d(u) \ge (1-2\epsilon^{1/4})\frac{\ell-r+1}{\ell}|\partial\mathcal{H}|$.
\end{claim}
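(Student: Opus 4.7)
The plan for Claim~\ref{claim-d(u)-lower} is direct. By definition of $U = \partial_{r-2}\mathcal{G}$, every $u \in U$ lies in some $(r-1)$-edge $E \in \mathcal{G}$, so I can isolate $d(u)$ via the identity $\sigma(E) = d(u) + \sum_{v \in E \setminus \{u\}} d(v)$ and bound the two sides using bounds already in hand. The lower bound on $\sigma(E)$ comes from the defining inequality of $\mathcal{G}$, and the upper bound on the remaining degrees comes from the uniform degree bound $(\ref{equ-upper-max-degree-v})$.

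Concretely, fix any $E \in \mathcal{G}$ with $u \in E$. The definition of $\mathcal{G}$ gives
$$\sigma(E) \ge (1-\epsilon^{1/4})\frac{(\ell-r+1)(r-1)}{\ell}|\partial\mathcal{H}|,$$
while $(\ref{equ-upper-max-degree-v})$ applied to each of the $r-2$ vertices in $E \setminus \{u\}$ yields
$$\sum_{v \in E \setminus \{u\}} d(v) \le (r-2)\bigl(1 + \ell^2 r \epsilon^{1/2}\bigr)\frac{\ell-r+1}{\ell}|\partial\mathcal{H}|.$$
Subtracting and factoring out $(\ell-r+1)|\partial\mathcal{H}|/\ell$, the bracketed quantity becomes
$(r-1)-(r-2) - (r-1)\epsilon^{1/4} - (r-2)\ell^2 r \epsilon^{1/2} = 1 - (r-1)\epsilon^{1/4} - (r-2)\ell^2 r \epsilon^{1/2}$.

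Choosing $\epsilon > 0$ sufficiently small (as already assumed in the section), the secondary error term $(r-2)\ell^2 r \epsilon^{1/2}$ is absorbed into $\epsilon^{1/4}$, since $\epsilon^{1/2} = \epsilon^{1/4}\cdot \epsilon^{1/4}$ and we are free to take $\epsilon^{1/4}$ much smaller than $1/(\ell^2 r^2)$. The total error is then at most $2\epsilon^{1/4}$, giving $d(u) \ge (1-2\epsilon^{1/4})(\ell-r+1)|\partial\mathcal{H}|/\ell$ as claimed. There is no real obstacle here; the proof is a one-line rearrangement once the two earlier bounds are invoked, and the claim then feeds into the subsequent size estimate of $U$, which will play the role of the corresponding claim in the proof of Lemma~\ref{LEMMA:cancellative-KK-stability-reduction}.
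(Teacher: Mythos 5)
Your proof is exactly the paper's: pick an $E\in\mathcal{G}$ containing $u$, lower-bound $\sigma(E)$ by the defining inequality of $\mathcal{G}$, upper-bound the remaining $r-2$ degrees by $(\ref{equ-upper-max-degree-v})$, and subtract. The structure and all the intermediate quantities match. One small remark, shared equally by your write-up and by the paper's own displayed calculation: after your (correct) absorption of the secondary error term into a single $\epsilon^{1/4}$, the total error is $(r-1)\epsilon^{1/4}+\epsilon^{1/4}=r\epsilon^{1/4}$, not $2\epsilon^{1/4}$; for $r\ge 3$ the stated constant $2$ is therefore too tight and should be replaced by (say) $r$. This is immaterial for the downstream claims, where only an $O(\epsilon^{1/4})$ loss is needed, but since you reproduce the same step you should either adjust the constant or note explicitly that it can be relabeled.
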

\begin{proof}[Proof of Claim \ref{claim-d(u)-lower}]
It follows from the definition of $U$ that for every $u \in U$ there exists $E\in \mathcal{G}$ with $u \in E$.
Then it follows from the definition of $\mathcal{G}$ that
$\sigma(E) \ge (1-\epsilon^{1/4}) (\ell-r+1)(r-1)|\partial\mathcal{H}|/\ell$.
So by $(\ref{equ-upper-max-degree-v})$
\begin{align}
d(u)
& = \sigma(E) - \sum_{v \in E\setminus \{u\}} d_{{\mathcal{H}}}(v) \notag\\
& \ge (1-\epsilon^{1/4}) \frac{(\ell-r+1)(r-1)}{\ell}|\partial\mathcal{H}|
            - (r-2)(1+\ell^2 r \epsilon^{1/2})\frac{\ell-r+1}{\ell}|\partial\mathcal{H}| \notag\\
& \ge (1-2\epsilon^{1/4})  \frac{\ell-r+1}{\ell}|\partial\mathcal{H}|. \notag
\end{align}
\end{proof}

Next we show an upper bound for $|U|$.

\begin{claim}\label{claim-size-U-upper-bound}
We have $|U| \le (1+4\epsilon^{1/4})\ell \left(\frac{|\partial\mathcal{H}|}{\binom{\ell}{r-1}}\right)^{1/(r-1)}$.
\end{claim}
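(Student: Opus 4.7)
The plan is to derive the upper bound on $|U|$ by combining the vertex degree lower bound from Claim~\ref{claim-d(u)-lower} with the global edge count provided by Theorem~\ref{THM:LM-clique-expansion}. Specifically, I will use the standard double-counting identity $\sum_{u \in U} d(u) \le r|\mathcal{H}|$, since each edge of $\mathcal{H}$ contributes to at most $r$ vertex degrees counted on the left.

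First, Claim~\ref{claim-d(u)-lower} supplies $d(u) \ge (1-2\epsilon^{1/4})\frac{\ell-r+1}{\ell}|\partial\mathcal{H}|$ for every $u\in U$. Summing this lower bound over $U$ and applying $\sum_{u\in U}d(u) \le r|\mathcal{H}|$ yields
\begin{align*}
|U| \le \frac{r|\mathcal{H}|}{(1-2\epsilon^{1/4})\frac{\ell-r+1}{\ell}|\partial\mathcal{H}|}.
\end{align*}
Next I invoke Theorem~\ref{THM:LM-clique-expansion} to substitute $|\mathcal{H}| \le \binom{\ell}{r}\bigl(|\partial\mathcal{H}|/\binom{\ell}{r-1}\bigr)^{r/(r-1)}$. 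Using the identity $r\binom{\ell}{r} = (\ell-r+1)\binom{\ell}{r-1}$, the factors of $\ell-r+1$ cancel, and after simplification (folding $|\partial\mathcal{H}|^{r/(r-1)}/|\partial\mathcal{H}|$ into $|\partial\mathcal{H}|^{1/(r-1)}$, and combining the $\binom{\ell}{r-1}$ factors) the right-hand side becomes
\begin{align*}
\frac{\ell}{1-2\epsilon^{1/4}}\left(\frac{|\partial\mathcal{H}|}{\binom{\ell}{r-1}}\right)^{1/(r-1)}.
\end{align*}
Finally, since $\epsilon$ is chosen sufficiently small, $(1-2\epsilon^{1/4})^{-1} \le 1+4\epsilon^{1/4}$, which gives the claimed bound.

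There is no real obstacle here: the estimate is essentially a one-line consequence of Claim~\ref{claim-d(u)-lower} and Theorem~\ref{THM:LM-clique-expansion}, and the only care required is the routine algebraic manipulation with the binomial coefficients. The key insight is recognizing that $U$ is a set of high-degree vertices, so the degree-sum inequality is tight enough to match the Kruskal-Katona-type upper bound from Theorem~\ref{THM:LM-clique-expansion} to within the factor $(1+4\epsilon^{1/4})$.
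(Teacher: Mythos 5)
Your proof is correct and follows essentially the same argument as the paper: combine the degree lower bound from Claim~\ref{claim-d(u)-lower} with $\sum_{u\in U}d(u)\le r|\mathcal{H}|$ and the Kruskal-Katona-type bound of Theorem~\ref{THM:LM-clique-expansion}, then simplify via $\binom{\ell}{r}/\binom{\ell}{r-1}=(\ell-r+1)/r$.
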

\begin{proof}[Proof of Claim~\ref{claim-size-U-upper-bound}]
Since $\sum_{u \in U}d(u) \le r|{\mathcal{H}}|$,
by Claim \ref{claim-d(u)-lower} and Theorem~\ref{THM:LM-clique-expansion},
\begin{align}
|U|
 \le \frac{r|\mathcal{H}|}{(1-2\epsilon^{1/4})\frac{\ell-r+1}{\ell}|\partial\mathcal{H}|}
& \le \frac{r\binom{\ell}{r}\left(\frac{|\partial\mathcal{H}|}{\binom{\ell}{r-1}}\right)^{\frac{r}{r-1}}}
        {(1-2\epsilon^{1/4})\frac{\ell-r+1}{\ell}|\partial\mathcal{H}|} \notag\\
& =\frac{\frac{r}{\ell-r+1}\frac{\binom{\ell}{r}}{\binom{\ell}{r-1}}}{(1-2\epsilon^{1/4})}
      \ell\left(\frac{|\partial\mathcal{H}|}{\binom{\ell}{r-1}}\right)^{1/(r-1)}
\le (1+4\epsilon^{1/4})\ell \left(\frac{|\partial\mathcal{H}|}{\binom{\ell}{r-1}}\right)^{1/(r-1)}. \notag
\end{align}
Here we used the identity $\binom{\ell}{r}/\binom{\ell}{r-1} = (\ell-r+1)/r$.
\end{proof}

\begin{claim}\label{claim-size-hat-G}
We have $|\mathcal{H}[U]| \ge (1-9\ell^{2r} r \epsilon^{1/4})\left(\frac{|\partial\mathcal{H}|}{\binom{\ell}{r-1}}\right)^{\frac{r}{r-1}}$.
\end{claim}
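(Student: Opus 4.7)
The plan is to emulate the strategy of Claim~\ref{claim-hat(mathcal(G))-low-bound} from the cancellative section. For $0 \le i \le r$ set $\mathcal{E}_i = \{E\in \mathcal{H}\colon |E\cap U| = i\}$, so that $|\mathcal{H}[U]| = |\mathcal{E}_r| = |\mathcal{H}| - \sum_{i=0}^{r-1}|\mathcal{E}_i|$. The hypothesis $(\ref{inequality-H-and-shadow-H})$ already gives a strong lower bound on $|\mathcal{H}|$, so the task reduces to showing that the two pieces $|\mathcal{E}_0|$ and $\sum_{i=1}^{r-1}|\mathcal{E}_i|$ are each negligible compared with $\bigl(|\partial\mathcal{H}|/\binom{\ell}{r-1}\bigr)^{r/(r-1)}$; note that $\sum_{i=1}^{r-1}|\mathcal{E}_i| \le \sum_{i=1}^{r-1} i|\mathcal{E}_i|$, so it suffices to bound the weighted sum.

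The central observation is an injection that produces, for each $u\in U$, the inequality $d_{\mathcal{H}[U]}(u) \ge d_\mathcal{H}(u) - |\partial\mathcal{H}\setminus\mathcal{G}|$. Indeed, if $E\ni u$ is an edge of $\mathcal{H}$ with $E \not\subset U$, then $E$ contains some $v \in V(\mathcal{H})\setminus U$; since $U = \partial_{r-2}\mathcal{G}$ is precisely the set of vertices covered by $\mathcal{G}$, the $(r-1)$-set $E\setminus\{u\}$ contains $v$ and hence cannot belong to $\mathcal{G}$, so $E \mapsto E\setminus\{u\}$ is an injection from such edges into $\partial\mathcal{H}\setminus\mathcal{G}$. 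Summing the identity $\sum_{u\in U}(d_\mathcal{H}(u) - d_{\mathcal{H}[U]}(u)) = \sum_{i=1}^{r-1} i|\mathcal{E}_i|$, and then plugging in the max-degree bound from $(\ref{equ-upper-max-degree-v})$, the lower estimate of Claim~\ref{claim-d(u)-lower}, the size bound on $\partial\mathcal{H}\setminus\mathcal{G}$ from Claim~\ref{claim-size-G}, and the upper bound on $|U|$ from Claim~\ref{claim-size-U-upper-bound}, yields $\sum_{i=1}^{r-1} i|\mathcal{E}_i| \le C(\ell,r)\,\epsilon^{1/4}\bigl(|\partial\mathcal{H}|/\binom{\ell}{r-1}\bigr)^{r/(r-1)}$ for an explicit polynomial $C(\ell,r)$.

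To handle $|\mathcal{E}_0|$, note that every $(r-1)$-subset of an edge of $\mathcal{E}_0$ lies entirely in $V(\mathcal{H})\setminus U$ and so cannot lie in $\mathcal{G}$; thus $|\partial \mathcal{E}_0| \le |\partial\mathcal{H}\setminus\mathcal{G}| \le \ell^2 r\,\epsilon^{1/4}|\partial\mathcal{H}|$ by Claim~\ref{claim-size-G}. Since $\mathcal{E}_0 \subseteq \mathcal{H}$ is still $\mathcal{K}_{\ell+1}^r$-free, Theorem~\ref{THM:LM-clique-expansion} applied to $\mathcal{E}_0$ gives $|\mathcal{E}_0| \le \binom{\ell}{r}\bigl(\ell^2 r\,\epsilon^{1/4}\bigr)^{r/(r-1)}\bigl(|\partial\mathcal{H}|/\binom{\ell}{r-1}\bigr)^{r/(r-1)}$, another $O(\epsilon^{1/4})$ contribution. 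Subtracting both error terms from the lower bound on $|\mathcal{H}|$ in $(\ref{inequality-H-and-shadow-H})$ then produces an inequality of the desired shape.

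I expect the only non-routine aspect to be the final bookkeeping: each intermediate estimate is $O(\epsilon^{1/4})$, but along the way one picks up multiplicative factors of $\binom{\ell}{r}$, $\binom{\ell}{r-1}^{-r/(r-1)}$, $(\ell^2 r)^{r/(r-1)}$, and $(\ell-r+1)/\ell$, and confirming that they all fit under the stated $9\ell^{2r}r$ envelope (which is what forces such a generous power of $\ell$ in the final constant) is where care rather than cleverness is needed.
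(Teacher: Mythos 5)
Your proposal follows the paper's argument essentially verbatim: decompose by $|E \cap U|$, use the injection $E \mapsto E\setminus\{u\}$ (which the paper leaves implicit but your explanation is exactly why $d_{\mathcal{H}[U]}(u) \ge d_{\mathcal{H}}(u) - |\partial\mathcal{H}\setminus\mathcal{G}|$) together with the max-degree bound, Claim~\ref{claim-d(u)-lower}, Claim~\ref{claim-size-G}, and Claim~\ref{claim-size-U-upper-bound} to bound $\sum_{i\ge 1} i|\mathcal{E}_i|$, and apply Theorem~\ref{THM:LM-clique-expansion} to $\mathcal{E}_0$ via $|\partial\mathcal{E}_0|\le |\partial\mathcal{H}\setminus\mathcal{G}|$. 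The approach and all bounds match the paper's, up to the routine constant-tracking you flag (and note that the paper's displayed claim appears to have a typo: its own proof concludes with an extra factor $\binom{\ell}{r}$ and constant $9\ell^3 r^2$ rather than $9\ell^{2r}r$).
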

\begin{proof}[Proof of Claim \ref{claim-size-hat-G}]
By Claims~\ref{claim-d(u)-lower} and~\ref{claim-size-G}, for every $u \in U$ we have
\begin{align}
d_{\mathcal{H}[U]}(u)
  \ge d_{\mathcal{H}}(u) - |\partial\mathcal{H}\setminus{\mathcal{G}}|
& \ge (1-2\epsilon^{1/4})\frac{\ell-r+1}{\ell}|\partial\mathcal{H}| - \ell^2 r \epsilon^{1/4} |\partial\mathcal{H}| \notag\\
& \ge (1-3\ell^3 r \epsilon^{1/4})\frac{\ell-r+1}{\ell}|\partial\mathcal{H}|. \notag
\end{align}
For every $0 \le i \le r$ let $\mathcal{E}_{i}$ be the set of edges in $\mathcal{H}$ that have exactly $i$ vertices in $U$
and note that $\mathcal{E}_r = \mathcal{H}[U]$.
Then by $(\ref{equ-upper-max-degree-v})$ and Claim~\ref{claim-size-U-upper-bound},
\begin{align}
\sum_{i\in[r-1]}i|\mathcal{E}_i|
= \sum_{u\in U}d_{\mathcal{H}}(u) - r|\mathcal{E}_r|
& = \sum_{u\in U}d_{\mathcal{H}}(u) - \sum_{u\in U}d_{\mathcal{H}[U]}(u) \notag\\
& \le \sum_{u\in U}\left(\Delta(\mathcal{H}) - d_{\mathcal{H}[U]}(u)\right) \notag\\
& \le (\ell^2 r \epsilon^{1/2} + 3\ell^3 r \epsilon^{1/4}) \frac{\ell-r+1}{\ell}|\partial\mathcal{H}| |U| \notag\\
& \le 4\ell^3 r \epsilon^{1/4} \frac{\ell-r+1}{\ell}|\partial\mathcal{H}| \cdot
      (1+4\epsilon^{1/4})\ell \left(\frac{|\partial\mathcal{H}|}{\binom{\ell}{r-1}}\right)^{1/(r-1)} \notag\\
& \le 8\ell^{3} r^{2} \binom{\ell}{r} \epsilon^{1/4} \left(\frac{|\partial\mathcal{H}|}{\binom{\ell}{r-1}}\right)^{\frac{r}{r-1}}. \notag
\end{align}
On the other hand, by Claim~\ref{claim-size-G} and Theorem~\ref{THM:LM-cancellative},
\begin{align}
|\mathcal{E}_0|
\le \binom{\ell}{r}\left(\frac{|\partial\mathcal{H}|-|\mathcal{G}|}{\binom{\ell}{r-1}}\right)^{\frac{r}{r-1}}
\le \ell^2 r\epsilon^{1/4} \left(\frac{|\partial\mathcal{H}|}{\binom{\ell}{r-1}}\right)^{\frac{r}{r-1}}. \notag
\end{align}
Therefore,
\begin{align}
|\mathcal{H}[U]|
= |\mathcal{H}| - \sum_{i=0}^{r-1}|\mathcal{E}_{i}|
& \ge (1-\epsilon)\binom{\ell}{r}\left(\frac{|\partial\mathcal{H}|}{\binom{\ell}{r-1}}\right)^{\frac{r}{r-1}}
    -\left(8\ell^{3} r^{2} \binom{\ell}{r} \epsilon^{1/4}+\ell^2 r\epsilon^{1/4}\right)
     \left(\frac{|\partial\mathcal{H}|}{\binom{\ell}{r-1}}\right)^{\frac{r}{r-1}} \notag\\
& \ge (1-9\ell^{3} r^{2} \epsilon^{1/4})\binom{\ell}{r}\left(\frac{|\partial\mathcal{H}|}{\binom{\ell}{r-1}}\right)^{\frac{r}{r-1}}. \notag
\end{align}
\end{proof}

Let $m = |U|$.
Then it follows from Claim~\ref{claim-size-U-upper-bound} and $|\partial\mathcal{H}| = \binom{\ell}{r-1}(x/\ell)^{r-1}$ that
\begin{align}
m
\le (1+4\epsilon^{1/4})\ell \left(\frac{|\partial\mathcal{H}|}{\binom{\ell}{r-1}}\right)^{1/(r-1)}
\le (1+4\epsilon^{1/4})x.  \notag
\end{align}
Claim~\ref{claim-size-hat-G} implies that
\begin{align}
|\mathcal{H}[U]|
& \ge (1-9\ell^{3} r^{2} \epsilon^{1/4})\binom{\ell}{r}\left(\frac{|\partial\mathcal{H}|}{\binom{\ell}{r-1}}\right)^{\frac{r}{r-1}} \notag\\
& \ge (1-9\ell^{3} r^{2} \epsilon^{1/4})\binom{\ell}{r}\left(\frac{x}{\ell}\right)^{r}\notag\\
& \ge (1-9\ell^{3} r^{2} \epsilon^{1/4})\frac{1}{(1+4\epsilon^{1/4})^{r}}\binom{\ell}{r}\left(\frac{m}{\ell}\right)^{r}
 \ge (1-10\ell^{3} r^{2} \epsilon^{1/4})\binom{\ell}{r}\left(\frac{m}{\ell}\right)^{r}. \notag
\end{align}
Now Theorem~\ref{THM:old-stability-H} applied to $\mathcal{H}[U]$ implies that
$\mathcal{H}[U]$ contains a subgraph $\mathcal{H}'$ of size at least
\begin{align}
|\mathcal{H}[U]| - \delta_1 m^r
\ge (1-10\ell^{3} r^{2} \epsilon^{1/4})\binom{\ell}{r}\left(\frac{m}{\ell}\right)^{r} -\delta_1 m^r
& \ge |\mathcal{H}| - (10\ell^{3} r^{2} \epsilon^{1/4}+\delta_1)m^r \notag\\
& \ge |\mathcal{H}| - 2(10\ell^{3} r^{2} \epsilon^{1/4}+\delta_1)x^r \notag
\end{align}
such that $\mathcal{H}'$ is a subgraph of $T_{r}(m,\ell)$.
If $m \le \lceil x \rceil$, then let $V' = U$ and we are done.
Otherwise let $V' \subset U$ be a subset of size $\lceil x \rceil$.
Then due to $m \le (1+4\epsilon^{1/4})x$, the number of edges in $\mathcal{H}'[V']$ is at least
$|\mathcal{H}'| - 4\epsilon^{1/4} x^{r} \ge |\mathcal{H}| - 2(10\ell^{3} r^{2} \epsilon^{1/4}+\delta_1)x^r - 4\epsilon^{1/4} x^{r}$
(we can choose $\epsilon>0$ to be sufficiently small such that
$2(10\ell^{3} r^{2} \epsilon^{1/4}+\delta_1)+ 4\epsilon^{1/4} \le \delta$).
This completes the proof of Theorem~\ref{THM:stability-expansion-clique-KK-type}.
\end{proof}

\section{Concluding remarks}\label{SEC:remarks}
In this work we showed some extensions of Keevash's  stability result of the Kruskal-Katona theorem
to the classes of calcellative hypergraphs and hypergraphs without the expansion of cliques.
In general, one could ask whether similar results hold for other $\mathcal{F}$-free hypergraphs.
An classical example suggested by S\'{o}s is the Fano plane, which is the $3$-graph on vertex set $[7]$
with edge set
\begin{align}
\{123, 345, 561, 174, 275, 376, 246\}. \notag
\end{align}
The extremal properties of the Fano plane have been well studies by several authors (see e.g. \cite{DF00,KS05,FS05,BR19}).
However, a Kruskal-Katona type result for the Fano plane is still not known.

It is interesting that the inequality in Theorem~\ref{THM:LM-cancellative} is tight for every integer $r\ge 2$ while
the maximum size of an $n$-vertex cancellative $r$-graph is still unknown (even asymptotically) for every $r\ge 5$.
This suggests that one could prove a Kruskal-Katona type result for a family $\mathcal{F}$ whose Tur\'{a}n density is not known.
A famous example is the complete $3$-graph on four vertices $K_{4}^{3}$ (see \cite{TU41}).
It was shown in \cite{LM19A} that the Tur\'{a}n problem for $K_{4}^{3}$ does not have the stability property
(assuming the famous Tur\'{a}n tetrahedron conjecture is true).
So, it would be very interesting if the Kruskal-Katona type result for $K_{4}^{3}$ has the stability property.
There are many other interesting cases one could consider for $\mathcal{F}$,
and we refer the reader to the nice survey of Keevash \cite{KE11} for more details.

It seems that Theorems~\ref{THM:stability-cancellative-KK-type} and~\ref{THM:stability-expansion-clique-KK-type}
belong to a new type of stability results for $\mathcal{F}$-free hypergrahs,
which are different from the stability results proved before
(see e.g. \cite{SI68,AES74,KM04,FS05,KS05,MU06,PI08,HK13,NY17,BIJ17,NY18,BNY19,LM19,LMR1,LMR2}).
It would be interesting to see whether there are any applications of this new type of stability results.
\bibliographystyle{abbrv}
\bibliography{stabilityExpansion}
\end{document}